\newtheorem{thm}{Theorem}[section]
\newtheorem{lem}[thm]{Lemma}
\newtheorem{prop}[thm]{Proposition}
\newtheorem{rem}[thm]{Remark}
\newtheorem{example}[thm]{Example}
\def\square{\vbox{
      \hrule height 0.4pt
      \hbox{\vrule width 0.4pt height 5.5pt \kern 5.5pt \vrule width 0.4pt}
      \hrule height 0.4pt}}
\def\id{\mathrm{id}}
\def\Ker{\mathrm{K er}}
\def\ch\mathrm{c h}
\def\ab{\mathrm{a b}}
\newcommand{\Z}{\mathbb{Z}}
\newcommand{\calZ}{\ensuremath{\mathcal{Z}}}
\newcommand{\calG}{\ensuremath{\mathcal{G}}}
\newcommand{\calT}{\ensuremath{\mathcal{T}}}
\newcommand{\Index}{\mathrm{Index}}
\newcommand{\calB}{\mathcal{B}}
\newcommand{\AP}{\mathrm{AP}}
\let\la=\langle
\let\ra=\rangle
\numberwithin{equation}{section}
\begin{document}

\newcommand{\auths}[1]{\textrm{#1},}
\newcommand{\artTitle}[1]{\textsl{#1},}
\newcommand{\jTitle}[1]{\textrm{#1}}
\newcommand{\Vol}[1]{\textbf{#1}}
\newcommand{\Year}[1]{\textrm{(#1)}}
\newcommand{\Pages}[1]{\textrm{#1}}

\author{Roman Mikhailov}
\address{Steklov Mathematical Institute, Gubkina 8, 119991 Moscow, Russia}
\email{romanvm@mi.ras.ru}

\author{Jie Wu$^{\dag}$}
\address{Department of Mathematics, National University of Singapore, 2 Science Drive 2
Singapore 117542} \email{matwuj@nus.edu.sg}
\urladdr{www.math.nus.edu.sg/\~{}matwujie}

\thanks{$^{\dag}$ Research of the second author is supported in part by the AcRF Tier 1 (WBS No. R-146-000-137-112) and AcRF Tier 2 (WBS No. R-146-000-143-112) of MOE of Singapore and a grant (No. 11028104) of NSFC of China.}


\title{A combinatorial description of homotopy groups of spheres$^{*}$}
\thanks{$^{*}$ Research for this article was partially supported by a grant (No.11028104) of NSFC of China.}
\begin{abstract}
 We give a combinatorial description of general homotopy groups of $k$-dimensional spheres with $k\geq3$ as well as those of Moore spaces.
 For $n>k\geq 3,$ we construct a finitely generated group defined by explicit generators and relations, whose center is exactly $\pi_n(S^k)$.
\end{abstract}

\maketitle

\section{Introduction}
The purpose of this article is to give an explicit combinatorial
description of general homotopy groups of $k$-dimensional spheres
with $k\geq3$ as well as those of Moore spaces. The description is
given by identifying the homotopy groups as the center of a
quotient group of the self free products with amalgamation of pure
braid groups by certain symmetric commutator subgroups.

A combinatorial description of $\pi_*(S^2)$ was discovered by the
second author in 1994 and given in his thesis~\cite{Wu3}, with a
published version in~\cite{Wu2}. This description can be briefly
summarized as follows. Let $F_n$ be a free group of rank $n\geq 1$
with a basis given by $\{x_1,\ldots,x_n\}$. Let $R_i=\la
x_i\ra^{F_n}$ be the normal closure of $x_i$ in $F_n$ for $1\leq
i\leq n$. Let $R_{n+1}=\la x_1x_2\cdots x_n\ra^{F_n}$ be the
normal closure of the product element $x_1x_2\cdots x_n$ in $F_n$.
We can form a symmetric commutator subgroup
$$
[R_1,R_2,\ldots,
R_{n+1}]_S=\prod_{\sigma\in\Sigma_{n+1}}[\dots[R_{\sigma(1)},R_{\sigma(2)}],\ldots,R_{\sigma(n+1)}].
$$
This gives an explicit subgroup of $F_n$ with a set of generators that can be understood by taking a collection of iterated commutators. By~\cite[Theorem 1.4]{Wu2}, we have the following combinatorial description on $\pi_*(S^2)$.

\begin{thm} For $n\geq 1,$ there is an isomorphism
$$
\pi_{n+1}(S^2)\simeq \frac{R_1\cap \dots \cap R_{n+1}}{[R_1,\dots,
R_{n+1}]_S}
$$
Moreover, the homotopy group $\pi_{n+1}(S^2)$ is isomorphic to the
center of the group $F_n/[R_1,R_2,\ldots, R_{n+1}]_S$.\hfill
$\Box$
\end{thm}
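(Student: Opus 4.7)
The plan is to realize $\pi_{n+1}(S^{2})$ as the $n$-th homotopy group of a free simplicial group built from the circle, and then read off the combinatorial formula from its Moore complex; the center statement is then obtained by a direct commutator analysis inside $F_{n}/[R_{1},\dots,R_{n+1}]_{S}$.

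First I would invoke Milnor's free group construction $F[S^{1}]$ applied to the minimal simplicial circle (one nondegenerate simplex in each positive dimension). As a simplicial group, $F[S^{1}]$ is weakly equivalent to the Kan loop group of $S^{2}$, so $\pi_{n}F[S^{1}]\cong\pi_{n+1}(S^{2})$. In degree $n$ it is free on $n$ generators, which I identify with $F_{n}=\la x_{1},\dots,x_{n}\ra$ via iterated degeneracies of the fundamental 1-simplex. The key combinatorial step is to check that under this indexing the kernels of the face operators $d_{0},\dots,d_{n}$ on $F_{n}$ are exactly the normal closures $R_{1},\dots,R_{n+1}$, with the inner face operators contributing $R_{2},\dots,R_{n}$ and the outer operators contributing $R_{1}$ and $R_{n+1}=\la x_{1}\cdots x_{n}\ra^{F_{n}}$. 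Hence $\bigcap_{i=0}^{n}\ker d_{i}=R_{1}\cap\cdots\cap R_{n+1}$, and the Moore complex formula yields
$$
\pi_{n+1}(S^{2})\;=\;\pi_{n}F[S^{1}]\;=\;\bigl(R_{1}\cap\cdots\cap R_{n+1}\bigr)\big/\,d_{n+1}\Bigl(\bigcap_{i=0}^{n}\ker d_{i}\Big|_{F_{n+1}}\Bigr).
$$
The technical heart of the argument is identifying the image $d_{n+1}\bigl(\bigcap_{i<n+1}\ker d_{i}\bigr)$ with $[R_{1},\dots,R_{n+1}]_{S}$. The inclusion $\subseteq$ follows by expanding faces via simplicial identities and checking that every element in the image is a product of iterated commutators $[\dots[r_{\sigma(1)},r_{\sigma(2)}],\dots,r_{\sigma(n+1)}]$ with $r_{i}\in R_{i}$. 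For $\supseteq$ one constructs, for each $\sigma\in\Sigma_{n+1}$, an explicit element of $F_{n+1}$ modelled on a Samelson product, lying in $\bigcap_{i=0}^{n}\ker d_{i}$ and whose $d_{n+1}$ image is the chosen basic iterated commutator.

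For the second assertion I would establish both inclusions. One direction is direct: if $z\in R_{1}\cap\cdots\cap R_{n+1}$ and $x_{j}$ is a generator, expanding $[z,x_{j}]$ by the Hall--Witt identities and using $z\in R_{i}$ for every $i$ places $[z,x_{j}]$ inside an iterated commutator whose indices exhaust $\{1,\dots,n+1\}$, hence inside $[R_{1},\dots,R_{n+1}]_{S}$. For the reverse inclusion I would use the residual quotients $F_{n}/\bigl(R_{j}\cdot[R_{1},\dots,R_{n+1}]_{S}\bigr)$: each is (a quotient of) a free product that is visibly centreless outside the image of $R_{j}$, so any class that is central in $F_{n}/[R_{1},\dots,R_{n+1}]_{S}$ must already lie in $R_{j}\cdot[R_{1},\dots,R_{n+1}]_{S}$ for every $j$, and intersecting over $j$ places it in $R_{1}\cap\cdots\cap R_{n+1}$ modulo the symmetric commutator.

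The hard part will be the explicit simplicial-to-algebraic translation in the middle paragraph: producing enough elements of $F_{n+1}$ whose $(n{+}1)$-st face realizes each basic iterated commutator, and verifying that the simplicial identities cut out precisely the symmetric commutator subgroup rather than something strictly larger or smaller. Once that combinatorial identification is established, the isomorphism with $\pi_{n+1}(S^{2})$ and the upgrade to the center statement both follow in a bounded amount of additional work.
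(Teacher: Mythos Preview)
The paper does not actually prove this theorem here; it is quoted from~\cite{Wu2} with a $\Box$, and the only comment offered is that the original proof proceeds by ``determining the Moore boundaries of Milnor's $F[K]$-construction on the simplicial $1$-sphere $S^1$.'' Your overall strategy is therefore exactly the one used, and your identification of the hard step (showing $\calB_nF[S^1]=[R_1,\ldots,R_{n+1}]_S$) is accurate. One correction is needed in your set-up: in the basis of iterated degeneracies $x_i$ of $\sigma_1$, the inner faces $d_j$ for $0<j<n$ do \emph{not} kill $x_{j}$; they identify $x_j$ with $x_{j+1}$, so $\ker d_j=\langle x_jx_{j+1}^{-1}\rangle^{F_n}$, while $\ker d_0=\langle x_1\rangle^{F_n}$ and $\ker d_n=\langle x_n\rangle^{F_n}$. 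Only after the change of basis $y_j=x_jx_{j+1}^{-1}$, $y_n=x_n$ do the kernels become $\langle y_1\rangle,\ldots,\langle y_n\rangle,\langle y_1\cdots y_n\rangle$, matching the $R_i$'s of the statement. This is easily fixed but must be said.

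The genuine gap is in your centre argument. The claim that Hall--Witt identities convert $[z,x_j]$ (with $z\in\bigcap_iR_i$) into an $(n{+}1)$-fold commutator is unfounded: Hall--Witt permutes the three entries of a triple commutator but never increases commutator length, and knowing $[z,x_j]\in[R_i,R_j]$ for every $i$ does not place it in $[R_1,\ldots,R_{n+1}]_S$. The inclusion $\calZ_n\subseteq Z(F_n/\calB_n)$ is instead a general simplicial-group fact: for a Moore cycle $z$ and any $g\in F_n$ one builds an element of $N_{n+1}$ out of degeneracies (a commutator in $s_0z$ and a word in $s_ig$'s) whose last face is $[z,g]$, exhibiting $[z,g]$ as a boundary directly. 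The converse in~\cite{Wu2} is handled by the analogue of Proposition~2.14 there, using that $Z(F_m)=1$ for $m\ge2$; your residual-quotient idea does give this for $n\ge3$ (since $F_n/R_j\cong F_{n-1}$ is centreless), but it fails as stated for $n=2$, where $F_2/R_j\cong\Z$ is abelian and a separate check is required.
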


The groups $F_n/[R_1,R_2,\ldots, R_{n+1}]_S$ can be defined using
explicit generators and relations. This situation is very
interesting from the group-theoretical point of view: we don't
know how to describe homotopy groups $\pi_*(S^2)$ in terms of
generators and relations, but we can describe a bigger group whose
center is exactly $\pi_*(S^2)$.

It has been the concern of many people whether one can give a
combinatorial description of homotopy groups of higher dimensional
spheres, ever since the above result was announced in 1994.
Technically the proof of this theorem was obtained by determining
the Moore boundaries of Milnor's $F[K]$-construction~\cite{Milnor}
on the simplicial $1$-sphere $S^1$, which is a simplicial group
model for $\Omega S^2$. A canonical approach is to study Milnor's
construction $F[S^k]\simeq \Omega S^{k+1}$ for $k>1$. Although
there have been some attempts~\cite{ZW} to study this question
using $F[S^k]$, technical difficulties arise in handling Moore
boundaries of $F[S^k]$ in a good way, and  combinatorial
descriptions of homotopy groups of higher dimensional spheres
using simplicial group model $F[S^k]$ would be very messy.

In this article, we give a combinatorial description of
$\pi_*(S^k)$ for any $k\geq3$ by using free product with
amalgamation of pure braid groups. Our construction is as follows.
Given $k\geq 3,\ n\geq 2$, let $P_n$ be the $n$-strand Artin pure
braid group with the standard generators $A_{i,j}$ for $1\leq
i<j\leq n$. We construct a subgroup $Q_{n,k}$ of $P_n$ from
cabling as follows. Our cabling process starts from $P_2=\Z$
generated by the $2$-strand pure braid $A_{1,2}$.
\begin{enumerate}
\item[]\textbf{Step 1.} Consider the $2$-strand pure braid $A_{1,2}$. Let $x_i$ be $(k-1)$-strand braid obtained by inserting $i$ parallel strands into the tubular neighborhood of the first strand of $A_{1,2}$ and $k-i-1$ parallel strands into the tubular neighborhood of the second strand of $A_{1,2}$ for $1\leq i\leq k-2$. The picture of $x_i$ is as follows:
\begin{center}
\epsfig{file=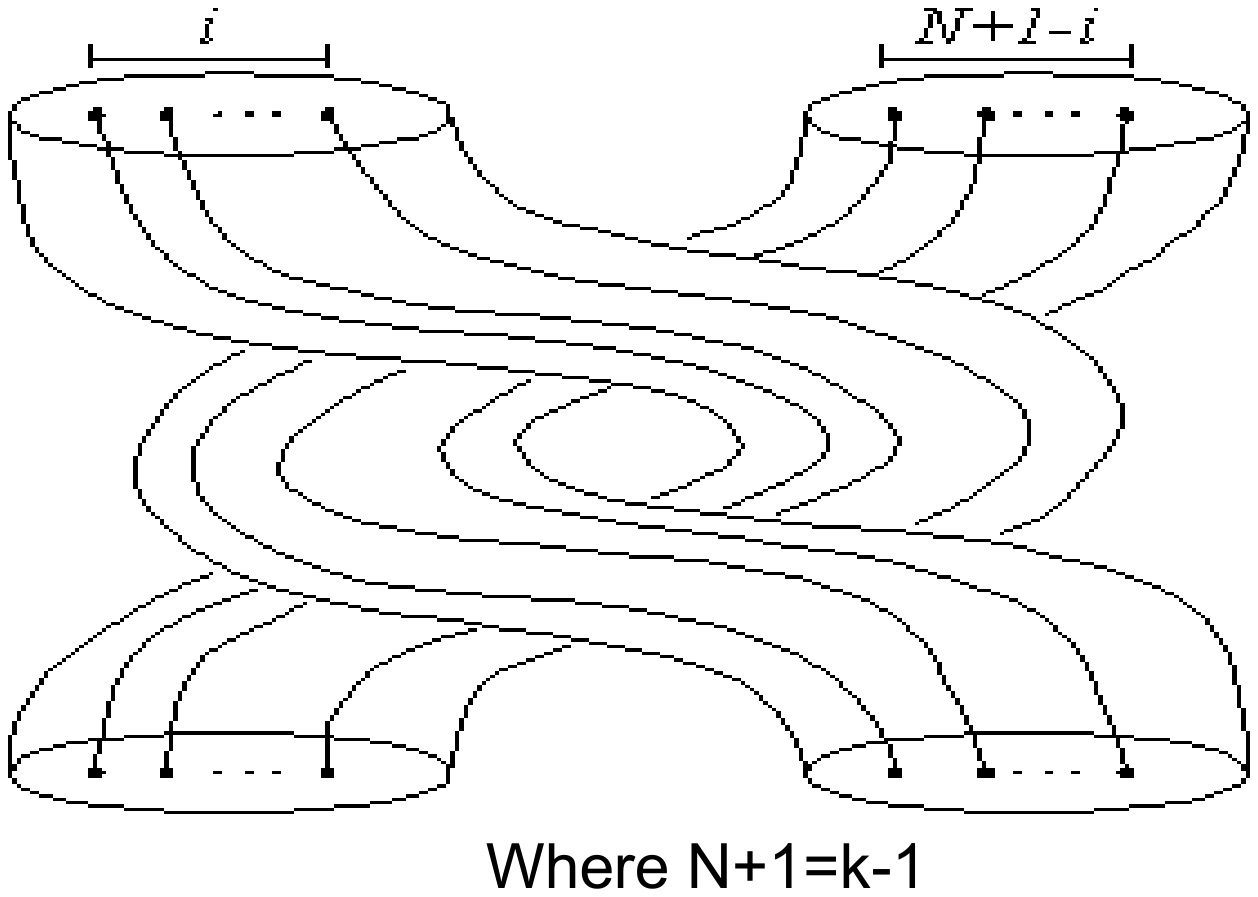, bbllx=0, bblly=0, bburx=488, bbury=72,
width=5in, height=3in, clip=}
\end{center}
\item[]\textbf{Step 2.} Let $\alpha_{k}=[\ldots [[x_1^{-1},
x_1x_2^{-1}],x_2x_3^{-1}],\ldots, x_{k-3}x_{k-2}^{-1},x_{k-2}]$ be
the $(k-1)$-strand braid. \item[]\textbf{Step 3.} By applying the
cabling process as in Step 1 to the element $\alpha_{k}$, we
obtain the $n$-strand braids $y_j$ for $1\leq j\leq
\binom{n-1}{k-2}$.
\end{enumerate}
Let $Q_{n,k}$ be the subgroup of $P_n$ generated by $y_j$ for $1\leq j\leq \binom{n-1}{k-2}$.
Now consider the free product with amalgamation
$$
P_n\ast_{Q_{n,k}} P_n.
$$
Let $A_{i,j}$ be the generators for the first copy of $P_n$ and let $A'_{i,j}$ denote the generators $A_{i,j}$ for the second copy of $P_n$. Let $R_{i,j}=\la A_{i,j},A'_{i,j}\ra^{P_n\ast_{Q_{n,k}}P_n}$ be the normal closure of $A_{i,j}, A'_{i,j}$ in $P_n\ast_{Q_{n,k}}P_n$. Let
$$
[R_{i,j}\ | \ 1\leq i<j\leq n]_S=\prod_{\{1,2,\ldots,n\}=\{i_1,j_1,\ldots,i_t,j_t\}}[[R_{i_1,j_1},R_{i_2,j_2}],\ldots,R_{i_t,j_t}]
$$
be the product of all commutator subgroups such that each integer $1\leq j\leq n$ appears as one of indices at least once. By Lemma~\ref{lemma3.6}, this product can be given by taking over those commutator subgroups $[[R_{i_1,j_1},R_{i_2,j_2}],\ldots,R_{i_t,j_t}]$ such that
\begin{enumerate}
\item[1)] $\{i_1,j_1,\ldots,i_t,j_t\}=\{1,2,\ldots,n\}$ and
\item[2)]
$\{i_1,j_1,\ldots,i_t,j_t\}\smallsetminus\{i_p,j_p\}\not=\{1,2,\ldots,n\}.$
\end{enumerate}
Our main theorem is as follows:

\begin{thm}\label{theorem1.2}
Let $k\geq3$. The homotopy group $\pi_n(S^k)$ is isomorphic to the
center of the group
$$
(P_n\ast_{Q_{n,k}}P_n)/[R_{i,j}\ | \ 1\leq i<j\leq n]_S
$$
for any $n$ if $k>3$ and any $n\not=3$ if $k=3$.
\end{thm}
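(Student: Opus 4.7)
The overall plan is to imitate the strategy of Theorem~1.1 for $S^2$, replacing Milnor's simplicial model $F[S^1] \simeq \Omega S^2$ by a simplicial group model for $\Omega S^k$ built from pure braid groups, where the cabling construction of Steps~1--3 encodes the $(k-2)$-fold extra connectivity of $S^k$ and the free product with amalgamation $P_n *_{Q_{n,k}} P_n$ assembles the pieces into a single combinatorial object.

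First, I would organize the groups $\{P_n *_{Q_{n,k}} P_n\}$ into a simplicial group $\Gamma_*^{(k)}$, with face maps induced by deleting a strand (and its primed partner) and degeneracies induced by doubling a strand, arranged so that the subgroup $Q_{n,k}$ generated by the cabled elements $y_j$ is precisely what must be amalgamated for the face identities to hold on the two copies of $P_n$. The element $\alpha_k$ plays the role of the Whitehead-type class that distinguishes $\Omega S^k$ from a product, and its cabled images $y_j$ are the obstructions that must be killed in each dimension. I would then prove $|\Gamma_*^{(k)}| \simeq \Omega S^k$, most plausibly by constructing a comparison map to a known model such as $F[S^{k-1}]$ or a Cohen--Wu simplicial braid construction, and checking that the comparison induces an isomorphism of Moore complexes in a range.

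Second, I would compute $\pi_{n-1}\Gamma_*^{(k)}$ using the Moore complex. Since $R_{i,j}$ is the normal closure of the generators $A_{i,j}, A'_{i,j}$ and these are the kernels of the relevant faces, the group of Moore cycles is $\bigcap_{i<j} R_{i,j}$, and an analysis of $d_0$ on the next simplicial degree --- combined with the reduction of indexing supplied by Lemma~\ref{lemma3.6} --- identifies the Moore boundaries with the symmetric commutator subgroup $[R_{i,j}\mid 1\leq i<j\leq n]_S$. This yields
$$
\pi_n(S^k) \;\cong\; \frac{\bigcap_{1\leq i<j\leq n} R_{i,j}}{[R_{i,j} \mid 1\leq i<j\leq n]_S}.
$$
I would then show this quotient injects into the center of $(P_n *_{Q_{n,k}} P_n)/[R_{i,j}\mid 1\leq i<j\leq n]_S$ using the commutator-collection argument from \cite{Wu2}: if $g \in \bigcap R_{i,j}$, then $[g, A_{a,b}]$ and $[g, A'_{a,b}]$ can be expanded into iterated commutators, each of which contains every index $1,\dots,n$ and hence lies in the symmetric commutator subgroup. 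Conversely, a central element must commute with each generator modulo the symmetric commutators, and a collection argument forces it into every $R_{i,j}$.

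The main obstacle will be the first step: producing the simplicial model and proving $|\Gamma_*^{(k)}|\simeq \Omega S^k$. The delicate part is verifying that the cabling recipe really encodes the relevant attaching maps, and matching the low-dimensional behavior --- in particular, the exceptional case $k=3$, $n=3$ in the statement almost certainly reflects an unstable anomaly at $\pi_3(S^3)$ coming from the failure of the Moore-complex identification or of the center-equals-intersection step in that single dimension, and pinning down exactly where this exception enters will require a careful separate calculation.
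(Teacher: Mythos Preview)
Your skeleton is right --- build a simplicial group with $(P_n\ast_{Q_{n,k}}P_n)$ in degree $n-1$, identify its homotopy type as $\Omega S^k$, compute Moore boundaries, and extract the center --- but the two decisive steps are handled quite differently in the paper, and your versions have gaps.

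For the homotopy type, the paper does not attempt a comparison map to $F[S^{k-1}]$ or a Moore-complex-in-a-range argument. It uses the Whitehead--Kan--Thurston theorem (Theorem~\ref{theorem2.1}): for simplicial monomorphisms $G\hookrightarrow G'$, $G\hookrightarrow G''$, the classifying space $\bar W(G'\ast_G G'')$ is the homotopy pushout of $\bar W G'\leftarrow \bar W G\to \bar W G''$. In the situation at hand the amalgam is (an embedded copy of) $F[S^{k-2}]$, and both outer terms are the contractible simplicial group $\AP_*$; so the pushout is the suspension of $\bar W F[S^{k-2}]\simeq S^{k-1}$, i.e.\ $S^k$. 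This is essentially a one-line argument once the theorem is available, and it is the main structural tool you are missing.

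For the center identification, your assertion that the Moore cycles equal $\bigcap_{i<j}R_{i,j}$ is neither proved nor used in the paper, and it is not obvious after amalgamation (Lemma~\ref{lemma3.5} gives it only in the free product $\calG\ast\calG$; passing to a quotient can enlarge the cycle group). The paper instead computes only the Moore \emph{boundaries} $\calB_{n-1}=[R_{i,j}\mid 1\leq i<j\leq n]_S$ by pushing forward from $\calG\ast\calG$, then verifies that each $\calT(S^k;\alpha)_q$ has trivial center and applies \cite[Proposition~2.14]{Wu2} as a black box. The trivial-center check is done via Lemma~\ref{cent}: $Z(G_1\ast_A G_2)\leq Z(A)$, and here $A=F[S^{k-2}]_q$ is free, hence centerless once its rank exceeds $1$; the remaining low degrees are handled by hand using that Brunnian braids avoid $Z(P_{k-1})$ when $k>3$. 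The exceptional case $k=3$, $n=3$ arises precisely because this low-degree verification fails (the amalgam in degree $1$ is $\Z$, and $P_2$ is abelian), not from any anomaly in the Moore complex itself; Example~\ref{example3.8} computes the center there to be $\Z^{\oplus 4}\neq \pi_3(S^3)$.
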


\vspace{.25cm} \noindent\textbf{Note.} The only exceptional case
is that $k=3$ and $n=3$. In this case, $\pi_3(S^3)=\Z$ while the
center of the group is bigger than $\Z$.

The center of the group $(P_n\ast_{Q_{n,k}}P_n)/[R_{i,j}\ | \
1\leq i<j\leq n]_S$ is in fact given by Brunnian-type braids in
the following sense: Let $\bar d_k\colon P_n\to P_{n-1}$ be the
operation of removing the $k$-th strand for $1\leq k\leq n$. A
Brunnian braid means an $n$-braid $\beta$ such that $\bar d_k
\beta=1$ for any $1\leq k\leq n$. Namely $\beta$ becomes a trivial
braid after removing any one of its strands. This notion can
canonically be extended to free products of braid groups. In other
words, we have a canonical operation $\bar d_k\colon P_n\ast
P_n\to P_{n-1}\ast P_{n-1}$ which is a group homomorphism such
that, for each $n$-braid $\beta$ in the first copy of $P_n$ or the
second copy of $P_n$, $\bar d_k\beta$ is the $(n-1)$-strand braid
given by removing the $k$-th strand of $\beta$. A Brunnian-type
word means a word $w$ such that $\bar d_k w=1$ for any $1\leq
k\leq n$. Without taking amalgamation, it can be seen from our
techiniques that the Brunnian-type braids are exactly given by the
symmetric commutator subgroup $[R_{i,j}\ | \ 1\leq i<j\leq n]_S$.
However the question on determining Brunnian-type braids after
taking amalgamation becomes very tricky. The question here is
about the self free product of $P_n$ with the amalgamation given
by the subgroup $Q_{n,k}$. It is straightforward to check that the
strand-removing operation $\bar d_k$ maps $Q_{n,k}$ into
$Q_{n-1,k}$ and so the removing operation $\bar d_k\colon
P_n\ast_{Q_{n,k}}P_n\to P_{n-1}\ast_{Q_{n-1,k}}P_{n-1}$ is a
well-defined group homomorphism. From our construction of
simplicial groups given by free products with amalgamation, the
Brunnian-type braids in $P_n\ast_{Q_{n,k}}P_n$  are exactly the
Moore cycles in our simplicial group model for $\Omega S^k$ and so
the center\footnote{For a group $G$, we denote its center by
$Z(G)$.}
$$Z((P_n\ast_{Q_{n,k}}P_n)/[R_{i,j}\ | \ 1\leq i<j\leq n]_S)\cong
\pi_n(S^k)$$ is exactly given by the Brunnian-type braids in
$P_n\ast_{Q_{n,k}}P_n$ modulo the subgroup $[R_{i,j}\ | \ 1\leq
i<j\leq n]_S$. One important point concerning Brunnian-type braids
of the self free product with amalgamation of $P_n$ is that the
homotopy groups $\pi_n(S^k)$ can be given as quotient groups for
any $k\geq 3$.

Mark Mahowald asked in 1995 whether one can give a combinatorial
description of the homotopy groups of the suspensions of real
projective spaces. In this article, we also give a combinatorial
description of the homotopy groups of Moore spaces as the first
step for attacking Mahowald's question. Let $M(\Z/q,k)$ be the
$(k+1)$-dimensional Moore space. Namely
$M(\Z/q,k)=S^k\cup_qe^{k+1}$ is the homotopy cofibre of the degree
$q$ map $S^k\to S^k$. If $k\geq3$, we give a combinatorial
description of $\pi_*(M(\Z/q,k))$ given as the centers of quotient
groups of threefold self free product with amalgamation of pure
braid groups, which is similar to the description given in
Theorem~\ref{theorem1.2}. (The detailed description will be given
in Section~\ref{section4}.) This description is less explicit then
the one given in Theorem~\ref{theorem1.2}, but it leads to
combinatorial descriptions of homotopy groups of finite complexes
from iterated self free products with amalgamations of pure braid
groups.

For the homotopy groups of $3$-dimensional Moore spaces, there is
an explicit combinatorial description that deserves to be
described here as it arises in certain divisibility questions
concerning braids. Let $x_1,\ldots,x_{n-1}$ be $n$-strand braid
obtained by cabling $A_{1,2}$ as described in step 1 of the
construction for the group $Q_{n,k}$. It was proved in~\cite{CW1}
that the subgroup of $P_n$ generated by $x_1,\ldots,x_{n-1}$ is a
free group of rank $n-1$ with a basis given by
$x_1,\ldots,x_{n-1}$. Let $F_{n-1}=\la x_1,\ldots,x_n\ra\leq P_n$
be the subgroup generated by $x_1,\ldots,x_{n-1}$. Given an
integer $q$, since $F_{n-1}=\la x_1,\ldots,x_{n-1}\ra$ is free,
there is a group homomorphism $\phi_q\colon F_{n-1}\to F_{n-1}$
such that $\phi_q(x_j)=x_j^q$ for $1\leq j\leq n-1$. Now we form a
free product with amalgamation by the push-out diagram
\begin{diagram}
F_{n-1}&\rInto& P_n\\
\dInto>{\phi_q}&&\dTo\\
F_{n-1}&\rTo&P_n\ast_{\phi_q}F_{n-1},\\
\end{diagram}
namely the group $P_n\ast_{\phi_q}F_{n-1},$ which is the free
product by identifying the subgroup $F_{n-1}$ with the subgroup of
$F_{n-1}$ generated by $x_1^q,\ldots,x_{n-1}^q$ in a canonical
way. Let $y_j$ denote the generator $x_j$ for $F_{n-1}$ as the
second factor in the free product $P_n\ast_{\phi_q}F_{n-1}$ for
$1\leq j\leq n-1$. Let
$$
R_1=\la y_1\ra^{P_n\ast_{\phi_q}F_{n-1}}, R_j=\la
y_{j-1}y^{-1}_j\ra^{P_n\ast_{\phi_q}F_{n-1}}, R_{n}=\la
y_{n-1}\ra^{P_n\ast_{\phi_q}F_{n-1}}
$$
be the normal closure of $y_1, y_{j-1}y_j^{-1},y_{n-1}$ in
$P_n\ast_{\phi_q}F_{n-1}$, respectively, for $2\leq j\leq n-1$.
Let $R_{s,t}=\la A_{s,t}\ra^{P_n\ast_{\phi_q}F_{n-1}}$ be the
normal closure of $A_{s,t}$ in $P_n\ast_{\phi_q}F_{n-1}$ for
$1\leq s<t\leq n$. Define the index set $\Index(R_j)=\{j\}$ for
$1\leq j\leq n$ and $\Index(R_{s,t})=\{s,t\}$ for $1\leq s<t\leq
n$. Now define the symmetric commutator subgroup
$$
[R_i, R_{s,t}\ | \ 1\leq i\leq n,1\leq s<t\leq n]_S=\prod_{\{1,2,\ldots,n\}=\bigcup\limits_{j=1}^t\Index(C_j)}[[C_1,C_2],\ldots,C_t],
$$
where each $C_j=R_i$ or $R_{s,t}$ for some $i$ or $(s,t)$.

\begin{thm}\label{theorem1.3}
The homotopy group $\pi_n(M(\Z/q,2))$ is isomorphic to the center of the group
$$
(P_n\ast_{\phi_q}F_{n-1})/[R_i, R_{s,t}\ | \ 1\leq i\leq n,1\leq s<t\leq n]_S
$$
for $n\not=3$.
\end{thm}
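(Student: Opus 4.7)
The plan is to follow the strategy used for Theorem~\ref{theorem1.2}, replacing the self free product $P_n \ast_{Q_{n,k}} P_n$ by the amalgamated free product $P_n \ast_{\phi_q} F_{n-1}$, which incorporates the $q$-th power amalgamation that mirrors the degree-$q$ attaching map in $M(\Z/q,2)$. Geometrically, $M(\Z/q,2) = \Sigma M(\Z/q,1)$ is the mapping cone of the degree $q$ self-map $S^2 \to S^2$, and upon looping, the extra 3-cell is encoded by an additional free factor $F_{n-1}$ attached to $P_n$ via the cabling subgroup whose generators have had formal $q$-th roots adjoined.

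First I would assemble the family $\{P_n \ast_{\phi_q} F_{n-1}\}_{n \geq 1}$ into a simplicial group $\calG_\bullet$. The face maps come from the strand-removal operators $\bar d_i$ on the $P_n$ factor together with their restriction to the cabling $F_{n-1} \subset P_n$; since $\bar d_i$ carries $F_{n-1}$ to $F_{n-2}$ and commutes with $\phi_q$, this descends to a well-defined face map on the amalgamation. After verifying the simplicial identities, I would compare $\calG_\bullet$ with Milnor's construction $F[K]$ on a minimal simplicial model $K$ of $M(\Z/q, 1)$ and show that $|\calG_\bullet| \simeq \Omega M(\Z/q,2)$, so that $\pi_n(M(\Z/q, 2))$ is the Moore homotopy group $Z_{n-1}(\calG)/\partial_n N_n(\calG)$.

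A parallel analysis to that carried out in the spheres case then identifies $Z_{n-1}(\calG)$ with the Brunnian-type elements of $P_n \ast_{\phi_q} F_{n-1}$ --- those $w$ satisfying $\bar d_i w = 1$ for every $1 \leq i \leq n$ --- and identifies $\partial_n N_n(\calG)$ with the symmetric commutator subgroup $[R_i, R_{s,t}\ | \ 1 \leq i \leq n, 1 \leq s < t \leq n]_S$. The key combinatorial observation is that any iterated commutator whose constituent normal closures collectively index every strand vanishes under each $\bar d_i$. To promote this quotient to the center of $(P_n \ast_{\phi_q} F_{n-1})/[R_i, R_{s,t}\ | \ 1 \leq i \leq n, 1 \leq s < t \leq n]_S$, I would argue as for $S^2$: any Brunnian-type word commutes with every generator $A_{s,t}$ and $y_j$ modulo the symmetric commutator subgroup, since conjugating across such a generator introduces a commutator whose index set still covers $\{1,\ldots,n\}$; conversely, a lower-central-filtration argument shows that any central class is represented by a Brunnian-type word.

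The principal obstacle I expect is establishing the homotopy equivalence $|\calG_\bullet| \simeq \Omega M(\Z/q, 2)$, i.e.\ showing that the $q$-th power amalgamation $\phi_q$ faithfully encodes the degree-$q$ attaching map; this requires constructing an explicit simplicial model of $M(\Z/q, 1)$ that is compatible with the pure-braid cabling and tracing how its cellular structure lifts to the amalgamated simplicial group. A secondary subtlety is pinpointing the $n = 3$ exception: paralleling the $k = n = 3$ obstruction in Theorem~\ref{theorem1.2}, it should arise from a low-dimensional mismatch between centrality and Brunnianity in $P_3 \ast_{\phi_q} F_2$, and ruling out spurious central elements in that single case requires a direct calculation.
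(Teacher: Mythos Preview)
Your overall strategy---build a simplicial group with $(P_n\ast_{\phi_q}F_{n-1})$ in degree $n-1$, identify its realization with $\Omega M(\Z/q,2)$, compute the Moore boundaries as the symmetric commutator subgroup, and then pass to the center---matches the paper's. However, the paper streamlines each of the three steps in ways you do not anticipate. For the homotopy type, rather than comparing with Milnor's $F[K]$ on a model of $M(\Z/q,1)$, the paper realizes the simplicial group directly as a pushout $\AP_*\ast_{F[S^1]}F[S^1]$ along the monomorphisms $\Theta$ and $F[q]$ and applies the Whitehead--Kan--Thurston theorem (Theorem~\ref{theorem2.1}): the classifying space of an amalgamated free product of simplicial groups is the homotopy pushout of classifying spaces, which here is visibly $M(\Z/q,2)$. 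For the boundaries, the paper does not work inside the amalgamation directly; it builds an auxiliary simplicial \emph{free} group $\calG\ast\tilde F$ (where $\tilde F_{n-1}$ is free on letters $z_1,\ldots,z_n$ with explicit faces) mapping onto $\calT(M(\Z/q,2))$, so that $\calB_{n-1}$ is the image of the corresponding boundary subgroup, computed exactly as in Lemma~\ref{lemma3.5}.

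The most significant divergence is in the center step. You propose to show directly that Brunnian-type words are central (by conjugating across generators) and conversely via a lower-central-filtration argument. The converse direction is the fragile one: there is no evident reason why a central element of the quotient must be represented by a Moore cycle, and the paper does not attempt this. Instead it invokes the general fact \cite[Proposition~2.14]{Wu2}: for a simplicial group $G$ with $Z(G_q)=1$, one has $\pi_q(G)\cong Z(G_q/\calB_qG)$. This reduces everything to checking that $P_n\ast_{\phi_q}F_{n-1}$ has trivial center for $n\geq 3$, which is immediate from Lemma~\ref{cent} (the center of an amalgamated free product lies in the amalgamated subgroup, here a nonabelian free group). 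In particular the paper never needs to identify the Moore cycles $\calZ_{n-1}$ explicitly, and the $n=3$ exception arises not from any Brunnian/central mismatch but simply because \cite[Proposition~2.14]{Wu2} requires trivial center one degree above as well.
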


\noindent\textbf{Note.} For the exceptional case $n=3$,
$\pi_3(M(\Z/q,2))$ is contained in the center but the equality
fails.

Some remarks concerning the methodology of this paper are given
next. The notion of simplicial sets and simplicial groups have
been largely studied since it was introduced in the early of
1950s, when D. Kan established the foundational work for
simplicial homotopy theory~\cite{Kan, Kan2}. Various important
results have been achieved by studying simplicial groups. For
instance, the Adams spectral sequence can be obtained from the
lower central series of Kan's construction~\cite{BCKQRS} for
computational purpose on homotopy groups. A combinatorial
description of general homotopy groups of $S^2$ was given
in~\cite{Wu2} with important progress in connecting to Brunnian
braids~\cite{BCWW}. This description was generalized in~\cite{EM}
by studying van Kampen-type theorem for higher homotopy groups.
Serious study of Brunnian braids~\cite{BMVW,LW2} introduced the
notion of symmetric commutator subgroups in determining the group
of Brunnian braids on surfaces $S$ for $S\not=S^2$ or
$\mathbb{R}\mathrm{P}^2$. By using this notion together with the
embedding theorem in~\cite[Theorem 1.2]{CW1} as well as the
Whitehead Theorem on free products with amalgamation of simplicial
groups~\cite[Proposition 4.3]{Kan-Thurston}, we are able to
control the Moore boundaries of our simplicial group models for
the loop spaces of spheres and Moore spaces, which leads to our
results.

Theorems~\ref{theorem1.2} and~\ref{theorem1.3} have more theoretical significance rather than computational purpose. It addresses the importance and complexity on the questions concerning Brunnian-type braids in free products with amalgamation of braid groups.

The article is organized as follows. In Section 2, we study free
products with amalgamation of simplicial groups. In some cases,
these products present simplicial models for loop spaces of
homotopy push-out spaces. In Section 3, for $k\geq 3,$ we
construct simplicial groups $\calT(S^k;\alpha)$ such that there is
a homotopy equivalence $|\calT(S^k;\alpha)|\simeq \Omega S^k.$
There is a natural way to describe Moore boundaries of
$\calT(S^k;\alpha)$ and this description is a key point in the
proof of Theorem \ref{theorem1.2} which we give in Section 3. In
Section 4, we consider triple free products with amalgamation of
simplicial braid groups and construct simplicial models for loop
spaces for Moore spaces. For $k\geq 3,$ we give a description of a
finitely-generated group such that its center is
$\pi_n(M(\Z/q,k))$ (Theorem \ref{theorem4.4}). Section 5 is about
3-dimensional Moore spaces. In this case, the simplicial models
for loop spaces of Moore spaces can be simplified. We prove
Theorem \ref{theorem1.3} in Section 5.

This article was finished during the visit of both authors to
Dalian University of Technology under the support of a grant
(No.11028104) of NSFC of China in July of 2011. The authors would
like to thank the hospitality of Dalian University of Technology
for supporting our research on this topic.

\vspace{.5cm}
\section{Free Products with Amalgamation on Simplicial Groups}
\vspace{.5cm}

Let $\phi\colon G\to G'$ and $\psi\colon G\to G''$ be group
monomorphisms. Then we have the free product with amalgamation
$G'\ast_GG''$. More precisely $G'\ast_GG''$ is the quotient group
of the free product $G'\ast G''$ by the normal closure of the
elements $\phi(g)\psi(g)^{-1}$ for $g\in G$. The group
$G'\ast_GG''$ has the universal property that the following
diagram
\begin{diagram}
G&\rInto^{\phi}&G'\\
\dInto>{\psi}&&\dTo\\
G''&\rTo&G'\ast_GG''\\
\end{diagram}
is a pushout diagram in the category of groups. Let $G'=\la X' \ |
\ R'\ra$ and $G''=\la X'' \ | \ R''\ra$ be presentations of the
groups $G'$ and $G''$, respectively. Let $X$ be a set of
generators for the group $G$. Then the group $G'\ast_GG''$ has a
presentation
$$
G'\ast_GG''=\la X', X'' \ | \ R', \ R'', \ \phi(x)\psi(x)^{-1} \textrm{ for } x\in X\ra.
$$
In particular, if $X'$, $X''$, $R'$, $R''$ and $X$ are finite sets, then $G'\ast_GG''$ is a finitely presented group with a presentation given as above. The notion of free product with amalgamation can be canonically extended to the category of simplicial groups.

Recall that a simplicial group $G$ consists in a sequence of
groups $G=\{G_n\}_{n\geq 0}$ with face homomorphisms $d_i\colon
G_n\to G_{n-1}$ and degeneracy homomorphisms $s_i\colon G_n\to
G_{n+1}$ for $0\leq i\leq n$ such that the following simplicial
identities holds:
\begin{enumerate}
\item[1)] $\Delta$-identity: $d_id_j=d_jd_{i+1}$ for $i\geq j$,
\item[2)] Degeneracy Identity: $s_is_j=s_{j+1}s_i$ for $i\leq j$,
\item[3)] Mixing Relation:
$$
d_is_j=\left\{
\begin{array}{rcl}
s_{j-1}d_i&\textrm{ if }& i<j,\\
\id&\textrm{ if }& i=j,j+1,\\
s_jd_{i-1}&\textrm{ if }& i>j+1.\\
\end{array}\right.
$$
\end{enumerate}
A simplicial homomorphism $f\colon G\to G'$ consists in a sequence
of group homomorphism $f=\{f_n\}$ with $f_n\colon G_n\to G'_n$
such that $d^{G'}_if_n=f_{n-1}d^G_i$ and
$s^{G'}_if_n=f_{n+1}s^{G}_i$ for $0\leq i\leq n$. A simplicial
monomorphism $f\colon G\to G'$ means a simplicial homomorphism
$f=\{f_n\}$ such that each $f_n\colon G_n\to G'_n$ is a
monomorphism. Similarly we have the notion of simplicial
epimorphism.

For a simplicial group $G$, recall that the Moore chain complex $N_*G$ is defined by
$$
N_nG=\bigcap_{j=1}^n\Ker(d_i\colon G_n\to G_{n-1})
$$
with the differential given by the restriction of the first face $d_0|\colon N_nG\to N_{n-1}G$. The Moore chain complex functor has the following important properties. For a simplicial set $X$, let $|X|$ denote its geometric realization.

\begin{prop}\label{proposition2.1}
The following statements hold:
\begin{enumerate}
\item[1)] Let $G$ be any simplicial group. Then there is a natural isomorphism
$$H_n(N_*G;d_0|)\cong \pi_n(|G|)$$ for all $n$.
\item[2)] Let $f\colon G\to G'$ be a simplicial homomorphism. Then $f$ is a simplicial monomorphism (epimorphism) if and only if
$$
N(f)\colon N_qG\longrightarrow N_qG'
$$
is a monomorphism (epimorphism) for all $q$.
\item[3)] A sequence of simplicial groups
$$1\to G'\to G\to G''\to 1$$
is short exact if and only if the corresponding sequence of Moore chain complexes
$$
1\to N_*G'\to N_*G\to N_*G''\to 1
$$
is short exact.
\end{enumerate}
\end{prop}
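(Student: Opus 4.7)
The plan is to handle the three parts in order using classical simplicial methods. Part (1) is Moore's theorem, whose proof proceeds in three steps: first, observe that any simplicial group is a Kan complex by constructing explicit horn-fillers from the group operation and inversion; second, use the Kan condition to identify $\pi_n(|G|)$ with equivalence classes of spherical elements $x\in G_n$ satisfying $d_ix = e$ for every $0\leq i\leq n$; third, normalize each spherical element to lie in $N_nG$ by successive multiplication with appropriate degeneracies $s_j(y)^{\pm 1}$, kill the $d_0$-boundary in $N_{n+1}G$, and check that the resulting equivalence relation on $\ker(d_0|_{N_nG})$ coincides with the image of $d_0|_{N_{n+1}G}$. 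The naturality in $G$ is visible from the construction, as every step uses only the simplicial operators.

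For parts (2) and (3) the key tool is the normal form decomposition. Let $D_nG$ denote the subgroup of $G_n$ generated by all degenerate elements $s_i(G_{n-1})$ for $0\leq i\leq n-1$. A standard induction using the simplicial identities gives a factorization $G_n = N_nG \cdot D_nG$, which is compatible with simplicial homomorphisms. Given this, part (2) reduces to induction on $n$: if $N(f)_q$ is a monomorphism for every $q$, then so is $f_n$, since by the inductive hypothesis $f_{n-1}$ is injective (controlling the degenerate factor via $s_i f_{n-1} = f_n s_i$) while $N(f)_n$ controls the Moore factor. The epimorphism statement is dual, where one writes an element of $G'_n$ as a product of a Moore piece and a degenerate piece, lifts each piece using $N(f)_n$ and the inductive hypothesis, and multiplies.

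For part (3), the forward direction is immediate: since $N_nG$ is an intersection of kernels of homomorphisms between groups, a straightforward diagram chase shows that a short exact sequence of simplicial groups induces a short exact sequence on Moore complexes at every level. For the converse, assume the Moore-complex sequence is short exact. Apply part (2) to obtain that $G' \to G$ is a simplicial monomorphism and $G \to G''$ is a simplicial epimorphism. Exactness in the middle at level $n$ follows from the decomposition $G_n = N_nG \cdot D_nG$: given $g \in \ker(G_n\to G''_n)$, write $g = xy$ with $x\in N_nG$, $y\in D_nG$, so $x$ projects to $N_nG''$, which by Moore-complex exactness lifts to an element of $N_nG'$; after adjusting $g$ by this lift one reduces to the degenerate factor, where exactness follows by induction from the exactness at level $n-1$.

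The principal obstacle is part (1): the normalization procedure used in Moore's theorem must be carried out carefully to ensure that the corrections by degeneracies terminate and do not change the homotopy class, and this is where most of the technical work lies. Once the normal form theorem $G_n = N_nG\cdot D_nG$ is established, parts (2) and (3) are essentially formal consequences of the simplicial identities combined with induction on dimension.
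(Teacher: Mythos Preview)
Your approach is correct and broadly in the same spirit as the paper's, but the paper proceeds more by citation than by direct argument: assertion (1) is simply attributed to Moore via Curtis's survey, and assertion (2) to Quillen's \emph{Homotopical Algebra}. For the converse direction in (3), the paper does not chase the decomposition $G_n = N_nG\cdot D_nG$ directly. Instead it invokes Conduch\'e's decomposition theorem to conclude that the composite $G'\to G\to G''$ is trivial, so that $G'$ lands in $\Ker(G\to G'')$; then, since $N_*\Ker(G\to G'') = \Ker(N_*G\to N_*G'')$, the hypothesis on Moore complexes gives that $N_*G'\to N_*\Ker(G\to G'')$ is an isomorphism, and a second application of (2) finishes. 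This is slicker than your inductive chase, though both ultimately rest on the same normal-form decomposition.

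Two small points to tighten in your version of (3). First, you only argue $\ker\subseteq\im$ at level $n$; you should also note that the composite $G'\to G\to G''$ vanishes, which follows by the same decomposition-plus-induction (it vanishes on $N_nG'$ by hypothesis and on $D_nG'$ by the inductive step). Second, when you write $g = xy$ with $x\in N_nG$, $y\in D_nG$ and conclude that the images of $x$ and $y$ in $G''_n$ lie separately in the kernel, you are using not just $G_n = N_nG\cdot D_nG$ but the \emph{uniqueness} of this factorization (equivalently $N_nG\cap D_nG = \{1\}$); this should be stated explicitly, since without it one cannot split the kernel condition across the two factors.
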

\begin{proof}
Assertion (1) is the classical theorem of John Moore, see the
survey paper~\cite{Curtis}. Assertion (2) is given in Quillen's
book~\cite[Lemma 5, 3.8]{Quillen}.

(3). By~\cite[Proposition 4.1.4]{BCWW}, the Moore chain functor is
an exact functor. We show that the inverse statement is also true.
Namely if $1\to N_*G'\to N_*G\to N_*G''\to 1$ is short exact, then
$1\to G'\to G\to G''\to 1$ is short exact. By assertion (2),
$G'\to G$ is a simplicial monomorphism and $G\to G''$ is a
simplicial epimorphism. From Conduch\'e's decomposition theorem of
simplicial groups~\cite{Cond}, the composite $G'\to G\to G''$ is
trivial and so $G'$ is mapped into $\Ker(G\to G'')$. Since
$N_*(G')\cong N_*\Ker(G\to G'')=\Ker(N_*G\to N_*G'')$, $G'\to
\Ker(G\to G'')$ is an isomorphism by assertion (2) and the result
follows.
\end{proof}

Let $\calZ_nG=\bigcap_{j=0}^n\Ker(d_i\colon G_n\to G_{n-1})\leq
N_nG$ be the \textit{Moore cycles} and let
$\calB_n\calG=d_0(N_{n+1}G)\leq \calZ_nG$ be the \textit{Moore
boundaries}. By assertion (1), the homotopy group $\pi_n(|G|)$ is
given by $\calZ_nG/\calB_nG$.

The construction of free product with amalgamation on simplicial
groups is given in the same way. Let $\phi\colon G\to G'$ and
$\psi\colon G\to G''$ be simplicial monomorphisms. Then
$G'\ast_GG''$ is a simplicial group with each $(G'\ast_GG'')_n$ is
the free product with amalgamation of $G'_n\ast_{G_n}G''_n$ for
the group homomorphisms $\phi_n\colon G_n\to G'_n$ and
$\psi_n\colon G_n\to G''_n$. The face homomorphisms are (uniquely)
determined by the pushout property:
\begin{diagram}
G_n          &              &\rInto^{\phi_n} &                     &G'_n &  &\\
            &\rdInto>{\psi_n}&              &                      &{d_i}^{G'_i}\dTo&\rdTo&\\
\dTo>{d_i^G}&              &G''_n         &\rTo   &              &                &G'_n\ast_{G_n}G''_n\\
 G_{n-1}    & \rInto^{\phi_{n-1}}& &     &G'_{n-1}&  &\\
           & \rdInto>{\psi_{n-1}}&\dTo>{d_i^{G''}}&   &        &\rdTo& \dDashto>{d_i^{G'\ast_GG''}}\\
          &                  &G''_{n-1}       &\rTo &   &&G'_{n-1}\ast_{G_{n-1}}G''_{n-1}.\\
\end{diagram}
Similarly the degeneracy homomorphisms are (uniquely) determined
by the pushout property. The uniqueness of the induced face and
degeneracy homomorphisms forces the simplicial identities to hold
for $d^{G'\ast_GG''}_i$ and $s^{G'\ast_GG''}_j$ and so
$G'\ast_GG''$ becomes a simplicial group. If we write the elements
$w$ in $(G'\ast_GG'')_n=G'_n\ast_{G_n}G''_n$ in terms of words as
a product of elements from $G'_n$ or $G''_n$, then
$d^{G'\ast_GG''}_i(w)$ is given by applying $d_i^{G'}$ or
$d_i^{G''}$ to the factors of $w$. Similarly we can compute
degeneracy homomorphism $s_i^{G'\ast_GG''}$ on $(G'\ast_GG'')_n$
in the same manner.

There is a classifying space functor from the category of
simplicial groups to the category of simplicial sets, denoted by
$\bar W$, with the property that the geometric realization of
$\bar W(G)$ is a classifying space of the geometric realization of
the simplicial group $G$. We refer to Curtis' paper~\cite{Curtis}
for the detailed construction of the functor $\bar W$.

An important property of free product with amalgamation on
simplicial groups is that the classifying space of $G'\ast_GG''$
can be controlled. This property is a simplicial consequence of
the classical asphericity result of J. H. C.
Whitehead~\cite[Theorem 5]{Whitehead} in 1939 and the formal
statement of the following theorem was given in Kan-Thurston's
paper~\cite[Proposition 4.3]{Kan-Thurston}.

\begin{thm}[Whitehead Theorem]\label{theorem2.1}
Let $\phi\colon G\to G'$ and $\psi\colon G\to G''$ be simplicial monomorphisms. Then the classifying space $\bar W(G'\ast_GG'')$ is the homotopy push-out of the diagram
\begin{diagram}
\bar W G&\rTo^{\bar W\phi}& \bar WG'\\
{\bar W\psi}\dTo&\mathrm{push}&\dTo\\
\bar W G''&\rTo&\bar W (G'\ast_GG'').\\
\end{diagram}\hfill $\Box$
\end{thm}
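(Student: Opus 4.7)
My plan is to build a natural comparison map from the strict pushout of classifying simplicial sets into $\bar W(G' \ast_G G'')$, observe that its source is already a homotopy pushout, and then verify the comparison is a weak equivalence via the loop group adjunction.

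I would begin by noting that simplicial monomorphisms $\phi,\psi$ induce \emph{levelwise inclusions} $\bar W \phi,\bar W \psi$ of simplicial sets, by inspection of the standard formula $(\bar W H)_n = H_{n-1} \times H_{n-2} \times \cdots \times H_0$ and its functorial face/degeneracy structure. Since every monomorphism of simplicial sets is a cofibration, the strict pushout $Y := \bar W G' \cup_{\bar W G} \bar W G''$ is automatically a homotopy pushout. The canonical inclusions $G'\hookrightarrow G'\ast_G G''$ and $G''\hookrightarrow G'\ast_G G''$ agree after restriction along $\phi$ and $\psi$, so applying $\bar W$ and invoking the universal property of $Y$ produces a natural comparison map
$$
f \colon Y \longrightarrow \bar W(G' \ast_G G'').
$$

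To show $f$ is a weak equivalence, I would apply Kan's loop group functor $\mathbf{G}(-)$, which is left adjoint to $\bar W$ and therefore preserves pushouts. This converts $Y$ into the pushout $\mathbf{G}\bar W G' \cup_{\mathbf{G}\bar W G} \mathbf{G}\bar W G''$ of simplicial groups, which under the natural weak equivalences $\mathbf{G}\bar W H \to H$ maps to $G' \ast_G G''$. Checking that this last map is a weak equivalence, and then feeding the result back through the $(\mathbf{G}, \bar W)$-adjunction (using that $\bar W$ preserves weak equivalences between simplicial groups), yields the claim for $f$.

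The hardest step is the final verification, namely that a strict pushout of simplicial groups along simplicial monomorphisms is homotopy invariant. For ordinary discrete groups this is J.~H.~C.~Whitehead's 1939 asphericity theorem \cite{Whitehead}, and its simplicial upgrade is exactly the Kan--Thurston reformulation in \cite[Proposition 4.3]{Kan-Thurston}, which gives the conclusion directly. A more hands-on alternative would be to filter $G' \ast_G G''$ by word-length in reduced normal form (alternating coset representatives of $\phi(G)$ in $G'$ and $\psi(G)$ in $G''$) and to compare the induced filtration of $\bar W(G'\ast_G G'')$ with that of $Y$ via a Mayer--Vietoris spectral sequence; I expect this route to be substantially more technical, which is why I would appeal to the cited asphericity result instead.
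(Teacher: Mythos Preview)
The paper does not prove this theorem at all: the $\Box$ appears immediately after the statement, and the surrounding text explains that the result is a simplicial consequence of Whitehead's 1939 asphericity theorem, with the formal statement taken directly from Kan--Thurston~\cite[Proposition~4.3]{Kan-Thurston}. So there is nothing to compare your argument against---the paper simply cites the result.

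Your proposal is consistent with this, since you also land on the Kan--Thurston reference for the essential step. But note that your scaffolding is somewhat circular: the ``hardest step'' you isolate---that the strict pushout of simplicial groups along monomorphisms computes the homotopy pushout---is precisely the content of the theorem, and the Kan--Thurston proposition you invoke for it \emph{is} the theorem you are proving. The earlier passage through $\mathbf{G}\bar W$ does not buy anything independent, because showing that $\mathbf{G}\bar W G' \ast_{\mathbf{G}\bar W G} \mathbf{G}\bar W G'' \to G'\ast_G G''$ is a weak equivalence again requires the homotopy invariance of amalgamated free products along monomorphisms (and one would also need to check that $\mathbf{G}\bar W$ preserves the monomorphism hypothesis). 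So your write-up is not wrong, but it would be cleaner to do what the paper does: state the theorem and attribute it to~\cite[Proposition~4.3]{Kan-Thurston}, rather than dress the citation in a reduction that returns to itself.
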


\vspace{.5cm}
\section{Description of Homotopy Groups of Spheres and Proof of Theorem~\ref{theorem1.2}}
\vspace{.5cm}

 In this section, we are going to construct a
simplicial group model $\calT(S^k)$ for $\Omega S^{k}$, $k\geq 3$,
by using pure braid groups. From this, we are able to give a
combinatorial description of the homotopy group $\pi_q(S^k)$ for
general $q$.

\subsection{Milnor's $F[K]$-construction on spheres}
Let $K$ be a simplicial set with a fixed choice of base-point
$s_0^nx_0\in K_n$. Milnor~\cite{Milnor} constructed a simplicial
group $F[K]$ where $F[K_n]$ is the free group generated by $K_n$
subject to the single relation that $s_0^nx_0=1$. The face and
degeneracy homomorphisms on $F[K]$ are induced by the face and
degeneracy functions on $K$. An important property of Milnor's
construction is that the geometric realization $|K|$ of $F[K]$ is
homotopy equivalent to $\Omega\Sigma |K|$. (Note. In Milnor's
paper~\cite{Milnor}, $K$ is required to be a reduced simplicial
set. This result actually holds for any pointed simplicial set by
a more general result~\cite[Theorem 4.9]{Wu1}.)

We are interested in specific simplicial group models for $\Omega
S^{k+1}$ and so we start by considering the simplicial $k$-sphere
$S^k$. Recall that the simplicial $k$-simplex $\Delta[k]$ can be
defined explicitly as follows:
\begin{enumerate}
\item[] \textit{$\Delta[k]_n=\{(i_0,i_1,\ldots,i_n)\ | \ 0\leq i_0\leq i_1\leq \cdots \leq i_n\leq k\}$ with $d_i\colon \Delta[k]_n\to \Delta[k]_{n-1}$ given by removing the $(i+1)$st coordinate and $s_i\colon \Delta[k]_n\to \Delta[k]_{n+1}$ given by doubling the $(i+1)$st coordinate for $0\leq i\leq n$.}
\end{enumerate}
Let $\sigma_k=(0,1,\ldots,k)\in \Delta[k]_k$ and let $\partial\Delta[k]$ be the simplicial subset of $\Delta[k]$ generated by the faces $d_0\sigma_k,\ldots,d_k\sigma_k$. Namely $\partial\Delta[k]$ is the smallest simplicial subset of $\Delta[k]$ containing $d_i\sigma_k$ for $0\leq i\leq k$. Let $S^k=\Delta[k]/\partial\Delta[k]$. Then the geometric realization $|S^k|$ is homeomorphic to the standard $k$-sphere $S^k$. As a simplicial set, $S^k_n=\{\ast\}$ for $n<k$ and
\begin{equation}\label{equation3.1}
\begin{array}{rcl}
S^k_n&=&\{\ast, (i_0,i_1,\ldots,i_n)\ | \ 0\leq i_0\leq i_1\leq \cdots \leq i_n\leq k \\
&& \quad \quad \quad \textrm{ with } \{0,1,\ldots,k\}=\{i_0,i_1,\ldots,i_n\}\}\\
&=&\{\ast, s_{j_{n-k}}s_{j_{n-k-1}}\cdots s_{j_1}\sigma_k \ | \ 0\leq j_1<j_2<\cdots <j_{n-k}\leq n-1\}\\
\end{array}
\end{equation}
for $n\geq k$. In the first description above, it is required that
each $0\leq j\leq k$ appears at least once in the sequence
$(i_0,\ldots,i_n)$. In this description, we can describe the faces
and degeneracies by removing-doubling coordinates where we
identify the sequence $(i_0,\ldots,i_n)$ to be the base-point of
any one of $0\leq j\leq k$ does not appear in $(i_0,\ldots,i_n)$.
In the second description, we can use the simplicial identities to
describe the faces and degeneracies on $S^k$.

By applying Milnor's construction to $S^k$, we obtain the simplicial group $F[S^k]\simeq \Omega S^{k+1}$ with $F[S^k]_n$ a free group of rank $\binom{n}{k}$. The generators for $F[S^k]_n$ are given in formula~(\ref{equation3.1}) with $\ast=1$.

\subsection{The Simplicial Group $\AP_*$}
There is a canonical simplicial group arising from pure braid
groups systematically investigated in~\cite{BCWW}. We are only
interested in classical Artin pure braids and so we follow the
discussion in~\cite{CW1}. Let $\AP_n=P_{n+1}$ with the face
homomorphism
$$
d_i\colon \AP_n=P_{n+1}\longrightarrow \AP_{n-1}=P_n
$$
given by removing the $(i+1)$st strand of $(n+1)$-strand pure braids and the degeneracy homomorphism
$$
s_i\colon \AP_n=P_{n+1}\longrightarrow \AP_{n+1}=P_{n+2}
$$
given by doubling the $(i+1)$st strand of $(n+1)$-strand pure braids for $0\leq i\leq n$. Then $\AP_*$ forms a simplicial group. Let $A_{i,j}$, $1\leq i<j\leq n+1$, be the standard generators for $\AP_n=P_{n+1}$. Then the face operations in the simplicial group $\mathrm{AP}_*$ are
defined as follows:
\begin{equation}\label{equation3.2}
d_t(A_{i,j})  =
\begin{cases}
A_{i-1,j-1}
  & \quad   \textrm{ if } t+1 < i, \\
1
  & \quad   \textrm{ if } t +1 = i, \\
A_{i,j-1}
  & \quad   \textrm{ if } i < t + 1 < j,\\
1
  & \quad   \textrm{ if } t + 1 = j,\\
A_{i,j}
  & \quad   \textrm{ if } t + 1 > j.\\
\end{cases}
\end{equation}
and the degeneracy operations are defined as follows:
\begin{equation}\label{equation3.3}
s_t(A_{i,j}) =
\begin{cases}
A_{i+1,j+1}
  & \quad   \textrm{ if }  t+1 < i,\\
A_{i,j+1}\cdot A_{i+1,j+1}
  & \quad   \textrm{ if } t+1 = i,\\
A_{i,j+1}
  & \quad   \textrm{ if } i < t + 1 < j,\\
A_{i,j}\cdot A_{i,j+1}
  & \quad   \textrm{ if } t + 1 = j, \\
A_{i,j}
  & \quad   \textrm{ if } t + 1 > j. \\
\end{cases}
\end{equation}

Observe that $\AP_1=P_2\cong \Z$ is generated by $A_{1,2}$ with
$d_0A_{1,2}=d_1A_{1,2}=1$. The representing simplicial map
$$
f_{A_{1,2}}\colon S^1\longrightarrow \AP_*
$$
with $f_{\sigma_1}=A_{1,2}$ extends uniquely to a simplicial homomorphism
$$
\Theta\colon F[S^1]\longrightarrow \AP_*.
$$
The following embedding theorem plays an important role for our constructions of simplicial group models for the loop spaces of spheres and Moore spaces.
\begin{thm}~\cite[Theorem 1.2]{CW1}\label{theorem3.1}
The simplicial homomorphism $$\Theta\colon F[S^1]\longrightarrow \AP_*$$ is a simplicial monomorphism.\hfill $\Box$
\end{thm}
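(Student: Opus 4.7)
The plan is to define $\Theta$ by the universal property of Milnor's $F[-]$ construction and then show injectivity at every level. By Proposition~\ref{proposition2.1}(2), verifying that $\Theta$ is a simplicial monomorphism reduces to checking that each $\Theta_n\colon F[S^1]_n\to P_{n+1}$ is injective. From description~\eqref{equation3.1}, $F[S^1]_n$ is a free group of rank $n$ with canonical basis $\{y_i\}_{0\leq i\leq n-1}$, where $y_i=s_{j_{n-1}}s_{j_{n-2}}\cdots s_{j_1}\sigma_1$ is indexed by the unique increasing enumeration of $\{0,\ldots,n-1\}\setminus\{i\}$. Its image $\Theta(y_i)=s_{j_{n-1}}\cdots s_{j_1}A_{1,2}$ is a specific cabled pure braid in $P_{n+1}$, so the theorem becomes the assertion that these $n$ cablings of $A_{1,2}$ freely generate a rank-$n$ subgroup of $P_{n+1}$.

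I would proceed by induction on $n$. The base case $n=1$ is immediate, since $\Theta_1(\sigma_1)=A_{1,2}$ freely generates $P_2\cong\Z$. For the inductive step, I would exploit the Fadell--Neuwirth split short exact sequence
\begin{equation*}
1\longrightarrow K_n\longrightarrow P_{n+1}\stackrel{d_0}{\longrightarrow}P_n\longrightarrow 1,
\end{equation*}
in which $d_0$ is the simplicial face removing the first strand and $K_n=\langle A_{1,2},A_{1,3},\ldots,A_{1,n+1}\rangle$ is free of rank $n$. A routine application of the simplicial identities shows that $y_0=s_{n-1}s_{n-2}\cdots s_1\sigma_1$ is the unique basis element sent to the basepoint by $d_0$ (so $\Theta(y_0)\in K_n$), whereas for $i\geq 1$ the element $y_i$ has $s_0$ as its innermost degeneracy and $d_0 y_i$ is a canonical basis element of $F[S^1]_{n-1}$. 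The inductive hypothesis therefore gives that the projections $d_0\Theta(y_i)$ for $i\geq 1$ freely generate a rank-$(n-1)$ subgroup of $P_n$.

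A direct computation using the degeneracy formulas~\eqref{equation3.3} produces the closed form
\begin{equation*}
\Theta(y_0)=s_{n-1}s_{n-2}\cdots s_1 A_{1,2}=A_{1,2}A_{1,3}\cdots A_{1,n+1},
\end{equation*}
which is a cyclically reduced word of length $n$ in the basis of $K_n$, and in particular nontrivial. The remaining and principal task is to show that $\Theta(y_0)$, together with the elements $\Theta(y_i)$ for $i\geq 1$ (whose $d_0$-projections are already free in $P_n$), freely generates a rank-$n$ subgroup of the semidirect product $P_{n+1}\cong K_n\rtimes P_n$. This is the main obstacle: one must rule out any reduced word in these $n$ cabled braids collapsing to the identity inside $P_{n+1}$.

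To close this gap, the plan is to apply the normal form for semidirect products of free groups. Given a reduced word $w$ in the $\Theta(y_i)$'s, its $d_0$-image is a word in the freely independent $d_0\Theta(y_i)$ ($i\geq 1$); if this projection is nontrivial, induction finishes the argument. Otherwise $w\in K_n$, and using the conjugation action of $P_n$ on $K_n$ coming from the splitting, $w$ rewrites as a reduced product of $P_n$-conjugates of $\Theta(y_0)^{\pm 1}$. Tracking this action through the Artin representation $P_{n+1}\hookrightarrow\Aut(F_{n+1})$, or equivalently via a Magnus/Fox-derivative expansion, one checks that these specific conjugates of $A_{1,2}A_{1,3}\cdots A_{1,n+1}$ form a Nielsen-reduced subset of $K_n$, forcing $w\neq 1$. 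Carrying out this bookkeeping -- the delicate combinatorial heart of the argument -- completes the induction and proves that $\Theta$ is a simplicial monomorphism.
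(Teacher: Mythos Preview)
The paper does not supply a proof of this theorem: it is quoted from \cite[Theorem~1.2]{CW1} and closed immediately with a $\Box$, so there is no in-paper argument to compare against. Your proposal must therefore be judged on its own merits.

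Your inductive framework via the Fadell--Neuwirth sequence is natural, and the reduction is set up correctly: writing $N=\langle y_0\rangle^{F[S^1]_n}$ for the normal closure, $N$ is free on the Schreier generators $\{t\,y_0\,t^{-1}:t\in\langle y_1,\ldots,y_{n-1}\rangle\}$, and the inductive step comes down exactly to showing that $\Theta_n|_N\colon N\to K_n$ is injective. The gap is that you do not actually prove this. Asserting that the infinite family
\[
\bigl\{\Theta_n(t)\,(A_{1,2}A_{1,3}\cdots A_{1,n+1})\,\Theta_n(t)^{-1}\ :\ t\in\langle y_1,\ldots,y_{n-1}\rangle\bigr\}
\]
is Nielsen-reduced in the rank-$n$ free group $K_n$ is not a routine verification. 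The conjugating elements $\Theta_n(t)$ range over the image of $s_0(F[S^1]_{n-1})$ in $P_{n+1}$, and their conjugation action on $K_n$ is essentially the Artin representation; free independence of these particular conjugates is precisely the nontrivial content of the theorem. Invoking ``the Artin representation, or equivalently a Magnus/Fox-derivative expansion'' names tools without showing how they resolve the question, and your closing sentence (``Carrying out this bookkeeping\ldots completes the induction'') is where the entire proof would have to live. As written it is an assertion, not an argument: the induction has relocated the difficulty rather than reduced it. To turn this into a proof you would need either a concrete cancellation analysis establishing the Nielsen condition for this infinite family, or a different mechanism (for instance, injectivity on associated graded Lie algebras together with residual nilpotence of $P_{n+1}$) that bypasses the direct combinatorics.
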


\subsection{Simplicial Group Models for $\Omega S^{k}$ with $k\geq 3$}\label{subsection3.3}
Assume that $k\geq 3$. Let $\alpha\in F[S^1]_{k-2}$ such that
\begin{enumerate}
\item[1)] $\alpha\not=1$ and
\item[2)] $d_j\alpha=1$ for all $0\leq j\leq k-2$, that is, $\alpha$ is a Moore cycle.
\end{enumerate}
(\textbf{Note}. We do not assume that $\alpha$ induces a
nontrivial element in $\pi_{k-2}(F[S^1])=\pi_{k-1}(S^2)$. There
are many choices for such an $\alpha$. We will give a particular
choice of $\alpha$ with braided instructions later. For a moment
$\alpha$ is given by any nontrivial Moore cycle.) The representing
simplicial map
$$
f_{\alpha}\colon S^{k-2}\longrightarrow F[S^1]
$$
extends uniquely to a simplicial homomorphism
$$
\tilde f_{\alpha}\colon F[S^{k-2}]\longrightarrow F[S^1]
$$
by the universal property of Milnor's construction.
\begin{lem}\label{lemma3.2}
Let $k\geq 3$ and let $\alpha\not=1\in F[S^1]_{k-2}$ be a Moore cycle. Then the map
$$
\tilde f_{\alpha}\colon F[S^{k-2}]\longrightarrow F[S^1]
$$
is a simplicial monomorphism.
\end{lem}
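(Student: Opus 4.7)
The plan is to invoke Proposition~\ref{proposition2.1}(2) and reduce to showing that the Moore chain map $N_q(\tilde{f}_\alpha)\colon N_qF[S^{k-2}]\to N_qF[S^1]$ is injective for every $q\geq 0$. This reduction is important because the level-wise rank $\binom{q}{k-2}$ of $F[S^{k-2}]_q$ eventually exceeds the rank $q$ of $F[S^1]_q$ (e.g.\ already at $q=4$, $k=4$), so direct level-wise injectivity of $\tilde{f}_\alpha$ can only be obtained by working on the (typically much smaller) Moore chain subgroups.

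For $q<k-2$, the simplicial set $S^{k-2}$ has only the basepoint in dimension $q$, so $F[S^{k-2}]_q=1$ and the claim is vacuous. For $q=k-2$, the triviality of $F[S^{k-2}]_{k-3}$ forces every element of $F[S^{k-2}]_{k-2}\cong\Z$ (generated by $\sigma_{k-2}$) to be a Moore cycle, and its image $\alpha$ is nontrivial in the free group $F[S^1]_{k-2}$ and hence of infinite order; injectivity at this level follows.

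For $q>k-2$, I proceed by induction on $q$. Given $w\in N_qF[S^{k-2}]$ with $\tilde{f}_\alpha(w)=1$, the simplicial identity $d_id_0=d_0d_{i+1}$ together with the defining property of $N_q$ shows that $d_0w\in N_{q-1}F[S^{k-2}]$; since $\tilde{f}_\alpha(d_0w)=d_0\tilde{f}_\alpha(w)=1$, the inductive hypothesis forces $d_0w=1$. Thus $w$ is actually a full Moore cycle, and the problem reduces to the injectivity of $\tilde{f}_\alpha$ restricted to $\calZ_qF[S^{k-2}]$.

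The main obstacle is this last restricted injectivity. Here I would exploit Theorem~\ref{theorem3.1}: composing with the simplicial monomorphism $\Theta\colon F[S^1]\hookrightarrow\AP_*$ replaces the target by the simplicial pure braid group, so that $\sigma_{k-2}$ maps to the pure braid $\Theta(\alpha)\in P_{k-1}$ and the free generators $s_I\sigma_{k-2}$ of $F[S^{k-2}]_q$ to the cabled copies $s_I\Theta(\alpha)\in P_{q+1}$. The task then becomes showing that these cabled copies admit no relations beyond those inherited from the free-group structure of $F[S^{k-2}]_q$. The technical heart is to combine an explicit iterated-commutator description of $\calZ_qF[S^{k-2}]$ with the cabling-freeness results for subgroups of pure braid groups from~\cite[Theorem 1.2]{CW1}, verifying that no nontrivial iterated commutator word in the $s_I\sigma_{k-2}$ can collapse to the identity after substituting the cabled braids $s_I\Theta(\alpha)$.
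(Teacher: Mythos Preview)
Your reduction via Proposition~\ref{proposition2.1}(2) is sound, and the cases $q\le k-2$ together with the inductive step reducing the kernel on $N_q$ to the kernel on $\calZ_q$ are correct. The gap is in the final step. What you call the ``technical heart'' is not an argument but a restatement of the problem: saying that the cabled copies $s_I\Theta(\alpha)$ ``admit no relations beyond those inherited from the free-group structure of $F[S^{k-2}]_q$'' is exactly the levelwise injectivity you set out to prove. Invoking Theorem~\ref{theorem3.1} does not help, since that result concerns cablings of the single generator $A_{1,2}$, not cablings of an arbitrary Moore cycle $\alpha$; there is no general ``cabling-freeness'' statement of the kind you would need. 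Nor is an ``explicit iterated-commutator description of $\calZ_qF[S^{k-2}]$'' available: for $q>k-2$ the quotient $\calZ_q/\calB_q$ is $\pi_{q+1}(S^{k-1})$, so these cycle groups are precisely as inaccessible as the homotopy groups of spheres.

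The paper sidesteps all of this with a Lie-theoretic reduction. Let $G=\tilde f_\alpha(F[S^{k-2}])$; since each $G_q\le F[S^1]_q$ is free, both source and target of the epimorphism $\tilde f_\alpha\colon F[S^{k-2}]\twoheadrightarrow G$ are levelwise free and hence residually nilpotent. It therefore suffices that the induced map on lower-central-series Lie algebras be an isomorphism, and since these Lie algebras are free on the abelianizations, one is reduced to checking that $\tilde f_\alpha^{\ab}\colon K(\Z,k-2)\to G^{\ab}$ is a simplicial isomorphism. But $N_*K(\Z,k-2)$ is $\Z$ concentrated in degree $k-2$, so by Proposition~\ref{proposition2.1} this is a one-line check using only $\alpha\neq 1$. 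The point is that passing to abelianizations collapses all the higher-dimensional complexity you were attempting to confront directly.
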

\begin{proof}
Let $G=\tilde f_{\alpha}(F[S^{k-2}])$ be the image of $\tilde f_{\alpha}$. Then $G$ is a simplicial subgroup of $F[S^1]$. Since $F[S^1]_q$ is a free group, $G_q$ is free group for each $q$. The statement will follow if we can prove that the simplicial epimorphism
$$
\tilde f_{\alpha}\colon F[S^{k-2}]\longrightarrow G
$$
is a simplicial monomorphism. Observe that since each
$F[S^{k-2}]_q$ is a free group which is residually nilpotent, it
suffices to show that the morphism of the associated Lie algebras
induced from the lower central series
$$
L(\tilde f_{\alpha})\colon L(F[S^{k-1}])\longrightarrow L(G)
$$
is a simplicial isomorphism. For each $q$, since both $F[S^{k-2}]_q$ and $G_q$ are free group, their associated Lie algebras are the free Lie algebras generated by their abelianizations. Thus it suffices to show that
$$
\tilde f^{\ab}_{\alpha}\colon F[S^{k-2}]^{\ab}=K(\Z,k-2)\longrightarrow G^{\ab}
$$
is a simplicial isomorphism.

Note that the Moore chain complex of $K(\Z,k-2)$ is given by
$$
N_qK(\Z,k-2)=\left\{
\begin{array}{rcl}
0&\textrm{ if }& q\not=k-2,\\
\Z&\textrm{ if }&q=k-2.\\
\end{array}\right.
$$
Since $\tilde f^{\ab}_{\alpha}\colon K(\Z,k-2)\to G^{\ab}$ is a simplicial epimorphism,
$$
N(\tilde f^{\ab}_{\alpha})\colon N_qK(\Z,k-2)\longrightarrow N_qG^{\ab}
$$
is an epimorphism for any $q$ by Proposition~\ref{proposition2.1}. It follows that $N_qG^{\ab}=0$ for $q\not=k-2$. For $q=k-2$, we have $N_{k-2}G^{\ab}=G_{k-2}=\la \alpha\ra\cong\Z$ with
$$
N(\tilde f^{\ab}_{\alpha})\colon N_{k-2}K(\Z,k-2)\cong\Z \longrightarrow N_{k-2}G^{\ab}\cong\Z
$$
an isomorphism from the definition of $f_{\alpha}$. Thus
$$
N(\tilde f^{\ab}_{\alpha})\colon NF[S^{k-2}]^{\ab}\longrightarrow NG^{\ab}
$$
is an isomorphism. By Proposition~\ref{proposition2.1}, $\tilde
f^{\ab}_{\alpha}\colon F[S^{k-2}]^{\ab}=K(\Z,k-2)\longrightarrow
G^{\ab}$ is a simplicial isomorphism. This finishes the proof.
\end{proof}

Now, by Theorem~\ref{theorem3.1} and Lemma~\ref{lemma3.2}, the composite
$$
\phi_{\alpha}\colon F[S^{k-2}]\rTo^{\tilde f_{\alpha}} F[S^1]\rTo^{\Theta} \AP_*
$$
is a simplicial monomorphism. Define the simplicial group
$\calT(S^k;\alpha)$ to be the free product with amalgamation
defined by the diagram
\begin{diagram}
F[S^{k-2}]&\rTo^{\phi_{\alpha}}& \AP_*\\
\dTo>{\phi_{\alpha}}&&\dTo\\
\AP_*&\rTo&\calT(S^k;\alpha)=\AP_*\ast_{F[S^{k-2}]}\AP_*.\\
\end{diagram}

\begin{thm}\label{theorem3.3}
Let $k\geq 3$ and let $\alpha\not=1\in F[S^1]_{k-2}$ be a Moore cycle. Then the geometric realization of the simplicial group $\calT(S^k;\alpha)$ is homotopy equivalent to $\Omega S^k$.
\end{thm}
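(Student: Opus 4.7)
The plan is to apply the classifying space functor $\bar W$, recognize the resulting homotopy pushout as $S^k$, and then loop back to obtain $\calT(S^k;\alpha)$. By the Whitehead theorem (Theorem~\ref{theorem2.1}) applied to the pushout defining $\calT(S^k;\alpha)=\AP_*\ast_{F[S^{k-2}]}\AP_*$, the space $|\bar W\calT(S^k;\alpha)|$ is the homotopy pushout of the diagram $|\bar W\AP_*|\leftarrow|\bar W F[S^{k-2}]|\to|\bar W\AP_*|$.

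Next one identifies the three factors. Milnor's construction gives $|F[S^{k-2}]|\simeq\Omega S^{k-1}$, and since $k\geq 3$ makes $S^{k-1}$ simply connected, $|\bar W F[S^{k-2}]|\simeq B\Omega S^{k-1}\simeq S^{k-1}$. The crucial claim is that $|\bar W\AP_*|$ is contractible. The intended argument is to exhibit an extra degeneracy $s_{-1}\colon\AP_n=P_{n+1}\to\AP_{n+1}=P_{n+2}$ given by inserting a new trivial first strand, so on generators $s_{-1}(A_{i,j})=A_{i+1,j+1}$. A direct case analysis against (\ref{equation3.2}) and (\ref{equation3.3}) yields $d_0 s_{-1}=\id$, together with the compatibilities $d_i s_{-1}=s_{-1}d_{i-1}$ for $i\geq 1$ and $s_{-1}s_j=s_{j+1}s_{-1}$ for $j\geq -1$. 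These are the standard identities that make $\AP_*$ simplicially contractible, hence $|\AP_*|$ is contractible and therefore so is its classifying space $|\bar W\AP_*|\simeq B|\AP_*|$.

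With the two corners of the pushout contractible, the homotopy pushout collapses to the unreduced suspension $\Sigma|\bar W F[S^{k-2}]|\simeq\Sigma S^{k-1}\simeq S^k$. Since $\AP_0=P_1$ and $F[S^{k-2}]_0$ are both trivial for $k\geq 3$, the simplicial group $\calT(S^k;\alpha)$ is reduced, so the standard equivalence $|G|\simeq\Omega|\bar W G|$ gives $|\calT(S^k;\alpha)|\simeq\Omega S^k$, as required.

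The main obstacle is the verification that $s_{-1}$ is a bona fide extra degeneracy. The mixing simplicial identities force a short case analysis on the relative order of the indices $i$, $j$, and $t$ appearing in $A_{i,j}$ and $d_t$ or $s_t$, because the ordinary degeneracies $s_t$ do not merely shift indices but produce products of generators such as $A_{i,j+1}\cdot A_{i+1,j+1}$ or $A_{i,j}\cdot A_{i,j+1}$. A pleasant by-product of this approach is that the homotopy type of $\calT(S^k;\alpha)$ is independent of the choice of nontrivial Moore cycle $\alpha$: the map $\bar W\phi_\alpha$ lands in a contractible space, so only its domain contributes to the pushout, while the specific $\alpha$ will matter later for the more refined group-theoretic identification in Theorem~\ref{theorem1.2}.
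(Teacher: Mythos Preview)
Your argument is correct and follows essentially the same route as the paper: apply the Whitehead theorem to the defining pushout, identify $\bar W F[S^{k-2}]\simeq S^{k-1}$ and $\bar W\AP_*\simeq\ast$, and conclude $\bar W\calT(S^k;\alpha)\simeq S^k$. The only difference is that the paper quotes \cite[Theorem~1.1]{CW1} for the contractibility of $\AP_*$, whereas you supply a direct extra-degeneracy proof via strand insertion; your verification of the identities for $s_{-1}$ is sound, and the remark on independence from the choice of $\alpha$ is a correct and useful observation.
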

\begin{proof}
By Theorems~\ref{theorem2.1}, the classifying space $\bar W\calT(S^k;\alpha)$ is the homotopy push-out of
\begin{diagram}
\bar W F[S^{k-2}]\simeq S^{k-1}&\rTo&\bar W \AP_*\\
\dTo&&\dTo\\
\bar W\AP_*&\rTo& \bar W\calT(S^k;\alpha).\\
\end{diagram}
By~\cite[Theorem 1.1]{CW1}, $\AP_*$ is a contractible simplicial group and so $\bar W\AP_*$ is contractible. It follows that
$$
\bar W\calT(S^k;\alpha)\simeq S^k
$$
and hence the result.
\end{proof}

\subsection{Some Technical Lemmas}\label{subsection3.4}
Recall ~\cite[p. 288-289]{MKS} that \textit{a bracket arrangement of weight $n$} in a group $G$ is a map
$\beta^n\colon G^n\to G$ which is defined inductively as follows:
$$\beta^1=\id_G,\,\, \beta^2(a_1,a_2)=[a_1,a_2]$$
for any $a_1,a_2\in G$, where $[a_1,a_2]=a_1^{-1}a_2^{-1}a_1a_2$. Suppose that the bracket arrangements of weight $k$ are
defined for $1\leq k<n$ with $n\geq 3$. A map $\beta^n\colon G^n\to G$ is called
a bracket arrangement of weight $n$ if $\beta^n$ is the composite
\begin{diagram}
G^n=G^k\times G^{n-k}&\rTo^{\beta^k\times\beta^{n-k}}&G\times G&\rTo^{\beta^2}&G
\end{diagram}
for some bracket arrangements $\beta^k$ and $\beta^{n-k}$ of weight $k$ and
$n-k$, respectively, with $1\leq k<n$. For instance, if $n=3$, there are two bracket arrangements given by
$[[a_1,a_2],a_3]$ and $[a_1,[a_2,a_3]]$.

Let $R_{j}$ be a sequence of subgroups of $G$ for
 $1\leq j\leq n$. The \textit{fat commutator subgroup}  $[[R_{1}, R_2, \dots,R_{n}]]$ is defined to be the subgroup of $G$ generated by all of
the commutators
$$
\beta^t(g_{i_1},\ldots,g_{i_t}),
$$
where
\begin{enumerate}
\item[1)] $1\leq i_s\leq n$;
\item[2)] $\{i_1,\ldots,i_t\}=\{1,\ldots,n\}$, that is each integer in $\{1,2,\cdots,n\}$ appears as at least one of the
integers $i_s$;
\item[3)] $g_j\in R_j$;
\item[4)] $\beta^t$ runs over all of the  bracket arrangements of weight $t$ (with $t\geq n$).
\end{enumerate}
For convenience, let $[[R_1]]=R_1$.

The \textit{symmetric commutator subgroup} $[R_1,
R_2,\ldots,R_n]_S$ defined by
$$
[R_1,R_2,\ldots,R_n]_S=\prod_{\sigma\in
\Sigma_n}[[R_{\sigma(1)},R_{\sigma(2)}],\ldots,R_{\sigma(n)}],
$$
where $[[R_{\sigma(1)},R_{\sigma(2)}],\ldots, R_{\sigma(n)}]$ is the subgroup generated by the left iterated commutators
$$
[[[g_1,g_2],g_3],\ldots,g_n]
$$
with $g_i\in R_{\sigma(i)}$. For convenience, let $[R_1]_S=R_1$. From the definition, the symmetric commutator subgroup is a subgroup of the fat commutator subgroup. In fact they are the same subgroup by the following theorem provided that each $R_j$ is normal.

\begin{lem}~\cite[Theorem 1.1]{LW2}\label{lemma3.4}
Let $R_j$ be any normal subgroup of a group $G$ with $1\leq j\leq n$. Then
$$
[[R_1,R_2,\ldots,R_n]]=[R_1,R_2,\ldots,R_n]_S.
$$\hfill $\Box$
\end{lem}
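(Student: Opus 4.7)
The lemma asserts the equality of two subgroups, so the plan is to verify both inclusions. The inclusion $[R_1, R_2, \ldots, R_n]_S \subseteq [[R_1, R_2, \ldots, R_n]]$ is immediate from the definitions: every left-iterated commutator $[[g_{\sigma(1)}, g_{\sigma(2)}], \ldots, g_{\sigma(n)}]$ is a particular bracket arrangement of weight $n$ whose index set is precisely $\{1, 2, \ldots, n\}$. The substance of the lemma is the reverse inclusion, which I would establish by induction on the weight $t \geq n$ of a bracket arrangement $\beta^t(g_{i_1}, \ldots, g_{i_t})$ whose indices exhaust $\{1, \ldots, n\}$.

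For the base case $t = n$, the tuple $(i_1, \ldots, i_n)$ is a permutation of $(1, \ldots, n)$. Here I would run a secondary induction on $n$: the recursive definition $\beta^n = [\beta^k, \beta^{n-k}]$ lets me combine left-normed representations of the two factors, obtained from the inductive hypothesis, into a single left-normed expression. The mechanism is the Hall--Witt identity
$$[[a, b^{-1}], c]^{b} \cdot [[b, c^{-1}], a]^{c} \cdot [[c, a^{-1}], b]^{a} = 1,$$
applied repeatedly to convert each nested bracket into a product of left-normed ones. Normality of each $R_j$ — which propagates to normality of every iterated commutator subgroup $[[R_{j_1}, R_{j_2}], \ldots, R_{j_s}]$ — is what guarantees that the conjugating factors appearing in Hall--Witt do not push terms outside $[R_1, R_2, \ldots, R_n]_S$, since conjugation by any element of $G$ preserves each such subgroup.

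For the inductive step $t > n$, some index $i$ must repeat. I would use the standard commutator expansions
$$[xy, z] = [x, z]^{y} \cdot [y, z], \qquad [x, yz] = [x, z] \cdot [x, y]^{z},$$
together with the same normality observation, to split off one of the repeated occurrences inside $\beta^t$. This rewrites the given fat commutator as a product of bracket arrangements of weight strictly less than $t$, each still with index set covering $\{1, \ldots, n\}$, and the outer induction closes the loop. A trailing case is when splitting reduces to a bracket arrangement whose index set properly contains $\{1,\ldots,n\}$ with the same repeated index removed; such a term lies in a fat commutator subgroup $[[R_1, \ldots, R_n]]$ with one index listed with higher multiplicity, and the inductive hypothesis still applies since $[R_1, \ldots, R_n]_S$ absorbs extra factors from any $R_j$ by definition.

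The main obstacle I anticipate is the bookkeeping in the base case: iterating Hall--Witt a priori produces conjugated commutators, and one must confirm that each conjugating element is either cancelled or absorbed inside $[R_1, R_2, \ldots, R_n]_S$. This is precisely where the hypothesis that \emph{every} $R_j$ (not just the outer entries of the bracket) be normal in $G$ is essential — without it, the conjugation factors in Hall--Witt would generate uncontrollable new commutators and the induction would collapse.
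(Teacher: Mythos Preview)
The paper does not prove this lemma at all: it is stated with a citation to \cite[Theorem~1.1]{LW2} and a terminal $\Box$, so there is no in-paper argument to compare against. Your outline---the trivial inclusion, then commutator calculus (Hall--Witt plus the bilinearity identities $[xy,z]=[x,z]^y[y,z]$ and $[x,yz]=[x,z][x,y]^z$) combined with normality of each $R_j$ to reduce arbitrary bracket arrangements to left-normed ones---is the standard route and is essentially what the cited reference does.

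One point to tighten: in your inductive step for $t>n$ you say a repeated index lets you ``split off one of the repeated occurrences'' via the bilinearity identities, but those identities expand a \emph{product} inside a commutator slot, not a repetition across slots. The cleaner way to run the induction is first to show (for arbitrary $t$) that any bracket arrangement $\beta^t(g_{i_1},\ldots,g_{i_t})$ lies in the product $\prod_{\tau\in\Sigma_t}[[R_{i_{\tau(1)}},R_{i_{\tau(2)}}],\ldots,R_{i_{\tau(t)}}]$, and then to observe that in any left-normed factor with a repeated index one may, using normality, absorb the outermost repeated entry into the preceding subgroup: $[[H,R_j],R_j]\le[H,R_j]$ whenever $H\unlhd G$. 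Iterating this (after permuting so that a repetition sits at the tail) drops the length to $n$ without losing coverage of $\{1,\ldots,n\}$. Your sketch has the right ingredients; this is just the bookkeeping you flagged as the likely obstacle.
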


One can determine the Moore chains and boundaries for the self
free products of $\AP_*$ with a help of the Kurosh theorem on the
structure of subgroups of free products. However, in order to get
this description, we will use another method. We construct a
simplicial free group $\calG$ as follows: For each $n\geq 0$, the
group $\calG_n$ is the free group generated by $ x_{i,j} $ for
$1\leq i<j\leq n+1$. The face and degeneracy operations are given
by formulae~(\ref{equation3.2}) and~(\ref{equation3.3}), where we
replace $A_{i,j}$ by $x_{i,j}$. It is straightforward to check
that the simplicial identities hold. Thus we have a simplicial
group $\calG$.

Now we are going to determine the Moore chains and Moore cycles of the free products of $\calG$. Let $J$ be an index set and let $\calG^{\ast J}=\ast_{\alpha\in J}\calG({\alpha})$, where each $\calG({\alpha})$ is a copy of $\calG$ indexed by an element $\alpha\in J$. For each group $(\calG(\alpha)_n=\calG_n$, let $x_{i,j}(\alpha)$ denote the generator $x_{i,j}$ for $1\leq i<j\leq n+1$. From the definition, $\calG^{\ast J}_n=\ast_{\alpha\in J}\calG(\alpha)_n$ is a free group with a basis given by $\{x_{i,j}(\alpha)\ | \ 1\leq i<j\leq n+1, \ \alpha\in J\}$.

A \textit{basic word} in the group $\calG^{\ast J}_n$ means one of the elements $x_{i,j}(\alpha)^{\pm1}$ for some $\alpha\in J$ and so $1\leq i<j\leq n+1$. Let
$$
w=\beta_t(x_{i_1,j_1}(\alpha_1)^{\pm1}, x_{i_2,j_2}(\alpha_2)^{\pm1},\ldots, x_{i_t,j_t}(\alpha_t)^{\pm1})
$$
be a $t$-fold iterated commutator on basic words, where the bracket $\beta_t(\ \cdots\ )$ is any bracket arrangement. Define
$$
\Index(w)=\{i_1,j_1,i_2,j_2,\ldots,i_t,j_t\}\subseteq \{1,2,\ldots,n+1\}.
$$
(\textbf{Note.} In our definition, $\Index(w)$ is only well-defined for commutators with entries from basic words.)

For each pair $1\leq i<j\leq n+1$, let
$$
R_{i,j}^J=\la x_{i,j}(\alpha)\ | \ \alpha\in J\ra^{\calG^{\ast J}}
$$
be the normal closure of the elements $x_{i,j}(\alpha)$, $\alpha\in J$, in the group $\calG^{\ast J}$. For a subset $T\subseteq \{1,2,\ldots,n+1\}$, define
$$
R[T]=\prod_{T\subseteq\{i_1,j_1,i_2,j_2,\ldots,i_t,j_t\}}[[R_{i_1,j_1},R_{i_2,j_2}],\ldots, R_{i_t,j_t}]
$$
be the product of the iterated commutator subgroup of $R_{i,j}$'s
such that each number in $T$ occurs at least once in the indices
of $R_{i,j}$'s. (Here if $t=1$, then we let commutator subgroup
$[R_{i_1,j_1}]=R_{i_1,j_1}$ by convention.) In the case that
$T=\{1,2,\ldots,n+1\}$, we denote
$$
[R_{i,j}\ | \ 1\leq i<j\leq n+1]_S
$$
by $R[1,2,\ldots,n+1]$.

\begin{lem}\label{lemma3.5}
Let $\calG^{\ast J}$ be the self free product of $\calG$ over a set $J$. Then
\begin{enumerate}
\item[1)] The Moore chains $N_n\calG^{\ast J}=R[2,3,\ldots,n+1]$.
\item[2)] The Moore cycles $\calZ_n\calG^{\ast J}=R[1,2,3,\ldots,n+1]=[R_{i,j}\ | \ 1\leq i<j\leq n+1]_S$.
\item[3)] The Moore boundaries $\calB_n\calG^{\ast J}=\calZ_n\calG^{\ast J}=R[1,2,3,\ldots,n+1]=[R_{i,j}\ | \ 1\leq i<j\leq n+1]_S$.
\end{enumerate}
\end{lem}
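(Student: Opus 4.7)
The plan is to prove (1), (2), (3) in that order, doing the easy forward inclusions of (1) and (2) first, then the reverse inclusions jointly by induction on $n$, and finally (3) from an explicit right-inverse of $d_0$. The key inputs throughout are the face-map formula (3.2), giving $d_t(x_{i,j}(\alpha))=1$ exactly when $t+1\in\{i,j\}$, and Lemma 3.4 identifying fat and symmetric commutator subgroups. The forward inclusions in (1) and (2) are immediate: $d_t$ annihilates the normal closure $R_{i,j}^J$ whenever $t+1\in\{i,j\}$, and a commutator dies once one of its entries is trivial; hence any generator of $R_n[T]$, being an iterated commutator whose index set covers $T$, is killed by $d_t$ whenever $t+1\in T$. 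Taking $T=\{2,\ldots,n+1\}$ gives $R_n[2,\ldots,n+1]\subseteq N_n$, and $T=\{1,\ldots,n+1\}$ additionally handles $t=0$, giving $R_n[1,\ldots,n+1]\subseteq\calZ_n$.

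For the reverse inclusions I would induct on $n$, treating (1) and (2) as a joint package, with immediate base case $n=1$ where $\calG^{\ast J}_1=R_{1,2}^J$. For the inductive step of (1), given $w\in N_n$, the simplicial identity $d_sd_0=d_0d_{s+1}$ pushes $d_0w$ into $N_{n-1}=R_{n-1}[2,\ldots,n]$ by hypothesis; using the semidirect decomposition $\calG^{\ast J}_n\cong\Ker(d_0)\rtimes\mathrm{Im}(s_0)$, write $w=w_K\cdot s_0(d_0w)$, reducing to showing $s_0(d_0w)\in R_n[2,\ldots,n+1]$ and $w_K\in\Ker(d_0)\cap N_n=\calZ_n\subseteq R_n[2,\ldots,n+1]$, the latter coming from the parallel induction on (2). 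For (2), given $w\in\calZ_n\subseteq N_n=R_n[2,\ldots,n+1]$ by (1), decompose $w$ into iterated commutators whose index sets are either $\{1,\ldots,n+1\}$ (already in $R_n[1,\ldots,n+1]$) or exactly $\{2,\ldots,n+1\}$; since $R_n[1,\ldots,n+1]$ is normal, these can be rearranged modulo it, so it suffices to show the product of commutators with index set exactly $\{2,\ldots,n+1\}$ vanishes modulo $R_n[1,\ldots,n+1]$. For this I would combine $d_0w=1$ with the fact that such commutators lie, modulo $R_n[1,\ldots,n+1]$, in the $s_0$-image of $\calZ_{n-1}=R_{n-1}[1,\ldots,n]$ (by induction on (2)), on which $d_0$ is injective via $d_0s_0=\id$.

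Part (3) follows immediately once (1) and (2) are in hand. The trivial inclusion $\calB_n\subseteq\calZ_n$ is always true. For the reverse, I would introduce the ad hoc lift $\ell:\calG^{\ast J}_n\to\calG^{\ast J}_{n+1}$ defined on the free generators by $\ell(x_{i,j}(\alpha))=x_{i+1,j+1}(\alpha)$ when $i\geq 2$ and $\ell(x_{1,j}(\alpha))=x_{2,j+1}(\alpha)$; since $\calG^{\ast J}_n$ is free, $\ell$ extends uniquely to a group homomorphism, and $d_0\circ\ell=\id$ by direct inspection of (3.2). Tracking indices, every $m\in\{1,\ldots,n+1\}$ in an iterated commutator $c\in R_n[1,\ldots,n+1]$ is sent under $\ell$ to $m+1$ (if $m\geq 2$) or to $2$ (if $m=1$), so the lifted index set covers $\{2,\ldots,n+2\}$; hence $\ell(c)\in R_{n+1}[2,\ldots,n+2]=N_{n+1}$ by (1), and since $d_0\ell=\id$, this gives $\calZ_n=R_n[1,\ldots,n+1]\subseteq d_0(N_{n+1})=\calB_n$.

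The main obstacle will be the $s_0$-bookkeeping in the reverse direction of (1) and (2): formula (3.3) gives $s_0(x_{1,j})=x_{1,j+1}\cdot x_{2,j+1}$, so $s_0$ is not a clean index-shift and does not manifestly send $R_{n-1}[2,\ldots,n]$ into $R_n[2,\ldots,n+1]$; moreover, the normal closures $R_{i,j}^J$ absorb arbitrary conjugations, further obscuring the identification of ``index-$\{2,\ldots,n+1\}$'' commutators with $s_0$-images. I expect to handle this either by replacing $s_0$ in the decomposition with the cleaner ad hoc section $\ell$ from part (3) (accepting that $\ell$ is not simplicial but using only its values and the identity $d_0\ell=\id$), or by invoking Lemma 3.4 to rewrite the relevant symmetric commutators in a normal form whose index coverage is manifest.
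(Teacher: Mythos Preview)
Your forward inclusions for (1) and (2) and your argument for (3) are correct and essentially match the paper's. The problem is the inductive scheme for the reverse inclusions of (1) and (2): as written it is circular at level $n$, and the obstacle you flag (the $s_0$-bookkeeping) is a symptom of this rather than the root cause.

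Concretely, in your step for (1) at level $n$ you write $w=w_K\cdot s_0(d_0w)$ and claim $w_K\in\Ker(d_0)\cap N_n=\calZ_n$. But $d_1s_0=\id$, so $d_1w_K=d_1w\cdot(d_0w)^{-1}=(d_0w)^{-1}$, which is generally nontrivial; hence $w_K\notin N_n$. Replacing $s_0$ by your shift $\ell$ does repair this (since $d_0w\in\calZ_{n-1}=R_{n-1}[1,\ldots,n]$ by the inductive hypothesis, so $\ell(d_0w)\in R_n[2,\ldots,n+1]\subseteq N_n$ and then $w_K\in\calZ_n$). However, you then need $\calZ_n\subseteq R_n[2,\ldots,n+1]$ to finish, and that is (2) at level $n$. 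Meanwhile your argument for (2) at level $n$ opens by invoking (1) at level $n$ (``$w\in\calZ_n\subseteq N_n=R_n[2,\ldots,n+1]$ by (1)''). So (1)$_n$ requires (2)$_n$ and (2)$_n$ requires (1)$_n$; neither of your proposed fixes (using $\ell$, or invoking Lemma~3.4) breaks this loop. The further claim in your (2)-step, that commutators with index set exactly $\{2,\ldots,n+1\}$ lie modulo $R_n[1,\ldots,n+1]$ in the $s_0$-image of $\calZ_{n-1}$, is also unjustified: the normal closures $R_{i,j}^J$ allow conjugation by arbitrary elements of $\calG^{\ast J}_n$, including those involving $x_{1,k}$'s, so such commutators need not lie in any $s_0$- or $\ell$-image.

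The paper abandons induction for the reverse inclusions and instead works directly with an explicit free basis of $K_n=\Ker(d_n)$, supplied by \cite[Proposition~3.3]{Wu2}: the basis elements are left-normed commutators $[[x_{i,n+1}(\alpha_0),x_{i_1,j_1}^{\epsilon_1}(\alpha_1)],\ldots,x_{i_t,j_t}^{\epsilon_t}(\alpha_t)]$ with the tail an irreducible word in $\calG^{\ast J}_{n-1}$. One then checks, using~(3.2), that for each $0\le k\le n-1$ the face $d_k$ kills exactly those basis elements whose index set contains $k+1$ and bijects the remaining basis elements onto the basis of $K_{n-1}$. This ``bijective-or-trivial on a basis'' situation is precisely the hypothesis of the algorithm in \cite[Theorem~3.4]{Wu2}, which then outputs generating sets for $N_n$ and $\calZ_n$ consisting of iterated commutators whose index sets cover $\{2,\ldots,n+1\}$ and $\{1,\ldots,n+1\}$ respectively; Lemma~3.4 places these inside the appropriate $R[T]$. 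The point is that the explicit basis lets one treat all the faces $d_1,\ldots,d_{n-1}$ (and $d_0$) simultaneously rather than peeling them off one at a time, which is what your induction attempts and where it gets stuck.
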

\begin{proof}
For assertions (1) and (2), the direction
$$
R[2,3,\ldots,n+1]\leq N_n\calG^{\ast J}\textrm{ and } R[1,2,3,\ldots,n+1]\leq \calZ_n\calG^{\ast J}
$$
can be easily checked as follows. From equation~(\ref{equation3.2}), we have $d_kx_{i,j}(\alpha)=1$ for $\alpha\in J$ if $k+1=i$ or $j$. Thus
$$
R_{i,j}\leq \Ker(d_k\colon \calG^{\ast J}_n\to \calG^{\ast J}_{n-1})
$$
if $k+1=i$ or $j$. Thus
$$
[[R_{i_1,j_1},R_{i_2,j_2}],\ldots, R_{i_t,j_t}]\leq N_n\calG^{\ast J}=\bigcap_{k=1}^n \Ker(d_k\colon \calG^{\ast J}_n\to \calG^{\ast J}_{n-1})
$$
if $\{2,3,\ldots,n+1\}\subseteq \{i_1,j_1,i_2,j_2,\ldots,
i_t,j_t\}$ since each $d_k$, $1\leq k\leq n$, sends one of entries
$R_{i_s,j_s}$ in this (iterated) commutator subgroup to the
trivial group. It follows that $R[2,3,\ldots,n+1]\leq
N_n\calG^{\ast J}$. Similarly
$R[1,2,\ldots,n+1]\leq\calZ_{n+1}\calG^{\ast J}$. Thus the main
point is to prove that
\begin{equation}\label{equation3.4}
N_n\calG^{\ast J}\leq R[2,3,\ldots,n+1] \textrm{ and } \calZ_n\calG^{\ast J}\leq R[1,2,\ldots,n+1].
\end{equation}
If $n=1$, then $R[2]=R[1,2]=\calG^{\ast J}_1$ because $\calG^{\ast J}_1$ is generated by $x_{1,2}(\alpha)$ for $\alpha\in J$. In this case, the identity that $N_1\calG^{\ast J}=\calZ_1\calG^{\ast J}=R[2]=R[1,2]=\calG^{\ast J}_1$. Thus we may assume that $n\geq 2$.

We first consider the last face operation
$$
d_n\colon \calG^{\ast J}_n\longrightarrow \calG^{\ast J}_{n-1}.
$$
Let $K_n=\Ker(d_n\colon \calG^{\ast J}_n\to \calG^{\ast J}_{n-1})$. From equation~(\ref{equation3.2}),
$$
d_n(x_{i,j}(\alpha))=\left\{
\begin{array}{lcl}
1&\textrm{ if } &1\leq i<j=n+1,\\
x_{i,j}(\alpha)&\textrm{ if }& 1\leq i<j\leq n.\\
\end{array}\right.
$$
Observe that the basis of $\calG^{\ast J}_n$ is given by the
disjoint union of the basis of $\calG^{\ast J}_{n-1}$ with the set
$\{x_{i,n+1}(\alpha) \ | \ 1\leq i<n+1,\ \alpha\in J\}$.
By~\cite[Proposition 3.3]{Wu2}, a basis for the free group $K_n$
is given by the subset $X_n$ of $\calG^{\ast J}_n$ consisting of
all of the following iterated commutators on basic words
\begin{equation}\label{equation3.5}
 w=[[[x_{i,n}(\alpha_0), x_{i_1,j_1}^{\epsilon_1}(\alpha_1)], x_{i_2,j_2}^{\epsilon_2}(\alpha_2)],\ldots, x_{i_t,j_t}^{\epsilon_t}(\alpha_t)],
\end{equation}
where
\begin{enumerate}
\item[1)] $t\geq0$ (Here if $t=0$, then $w=[x_{i,n}(\alpha_0)]=x_{i,n}(\alpha_0)$.)
\item[2)] $\epsilon_s=\pm1$ for $1\leq s\leq t$,
\item[3)] $1\leq i_s<j_s\leq n$ for $1\leq s\leq t$,
\item[4)] $\alpha_s\in J$ for $0\leq s\leq t$ and
\item[5)] the word $x_{i_1,j_1}^{\epsilon_1}(\alpha_1)x_{i_2,j_2}^{\epsilon_2}(\alpha_2) \cdots x_{i_t,j_t}^{\epsilon_t}(\alpha_t)$ is an irreducible word in the group $\calG^{\ast J}_{n-1}\leq \calG^{\ast J}_n$.
\end{enumerate}

Next we consider the face operation $d_k$ restricted to $K_n$ for
$0\leq k<n$. From the $\Delta$-identity $d_kd_n=d_{n-1}d_k$ for
$1\leq k\leq n-1$, we have the commutative diagram of short exact
sequence of groups
\begin{equation}\label{equation3.6}
\begin{diagram}
K_n=F(X_n)&\rInto& \calG^{\ast J}_n&\rOnto^{d_n}&\calG^{\ast J}_{n-1}\\
\dTo>{d_k|_{K_n}}&&\dTo>{d_{k}}&&\dTo>{d_k}\\
K_{n-1}=F(X_{n-1})&\rInto&G^{\ast J}_{n-1}&\rOnto^{d_{n-1}}&\calG^{\ast J}_{n-2}\\
\end{diagram}
\end{equation}
for $1\leq k\leq n-1$. Consider $d_kw$ for $w\in X_n$. From equation~(\ref{equation3.2}), $d_kx_{i,n}(\alpha)$ is given by the following table
$$
\begin{array}{lcl}
d_k&&\left(
\begin{array}{ccccccc}
x_{1,n}(\alpha)&\cdots& x_{k-1,n}(\alpha) & x_{k,n}(\alpha)& x_{k+1,n}(\alpha)&\cdots& x_{n-1,n}(\alpha)\\
\downarrow&  &\downarrow&\downarrow&\downarrow& &\downarrow\\
x_{1,n-1}(\alpha)&\cdots&x_{k-1,n}(\alpha)&1&x_{k,n-1}(\alpha)&\cdots&x_{n-2,n-1}(\alpha)\\
\end{array}\right).
\end{array}
$$

We now start to prove statement~(\ref{equation3.4}). Let
$$
X_n(k)=\{w\in X_n\ | \ k+1\in\Index(w)\}
$$
for $0\leq k\leq n-1$. If $w\in X_n(k)$, then $d_kw=1$ as $d_k$
sends one of the entries in the commutator $w$ to $1$. Let $w\in
X_n\smallsetminus X_n(k)$ be written as in~(\ref{equation3.5}).
Then $i\not=k+1$ and $k+1\not\in\{i_1,j_1,\ldots,i_t,j_t\}$. From
the above table, $d_kx_{i,n}(\alpha_0)=x_{i,n-1}(\alpha_0)$ for
$i<k+1$ and $x_{i-1,n-1}(\alpha_0)$ for $i>k+1$. For other entries
$x^{\epsilon_s}_{i_s,j_s}(\alpha_s)$, we have
$$
d_k(x^{\epsilon_s}_{i_s,j_s}(\alpha_s))=\left\{
\begin{array}{lcl}
x^{\epsilon_s}_{i_s-1,j_s-1}(\alpha_s)&\textrm{ if }& k+1<i_s,\\
x^{\epsilon_s}_{i_s,j_s-1}(\alpha_s)&\textrm{ if }& i_s<k+1<j_s,\\
x^{\epsilon_s}_{i_s,j_s}(\alpha_s)&\textrm{ if }& k+1>j_s.\\
\end{array}\right.
$$
Observe that
$$
d_k\colon \{x_{i,j}(\alpha) \ | \ \alpha\in J, \ 1\leq i<j\leq n \textrm{ and } k+1\not=i,j\}\longrightarrow \{x_{i,j}(\alpha) \ |\alpha\in J, \ 1\leq i<j\leq n-1\}
$$
is a bijection. The restriction of $d_k$ in the subgroup
$$
d_k|\colon F(x_{i,j}(\alpha) \ | \ \alpha\in J, \ 1\leq i<j\leq n \textrm{ and } k+1\not=i,j)\longrightarrow \calG^{\ast J}_{n-2}
$$
is an isomorphism. Since the word
$$
x_{i_1,j_1}^{\epsilon_1}(\alpha_1)x_{i_2,j_2}^{\epsilon_2}(\alpha_2) \cdots x_{i_t,j_t}^{\epsilon_t}(\alpha_t)\in F(x_{i,j}(\alpha) \ | \ \alpha\in J, \ 1\leq i<j\leq n \textrm{ and } k+1\not=i,j)
$$
is irreducible,
the word
$$
d_k(x_{i_1,j_1}^{\epsilon_1}(\alpha_1)x_{i_2,j_2}^{\epsilon_2}(\alpha_2) \cdots x_{i_t,j_t}^{\epsilon_t}(\alpha_t))=(d_kx_{i_1,j_1})^{\epsilon_1}(\alpha_1)(d_kx_{i_2,j_2})^{\epsilon_2}(\alpha_2) \cdots (d_kx_{i_t,j_t})^{\epsilon_t}(\alpha_t)
$$
is irreducible in $\calG^{\ast J}_{n-2}\leq \calG^{\ast J}_{n-1}$. It follows that $d_kw\in X_{n-1}$ for each $w\in X_n\smallsetminus X_n(k)$ and the function
$$
d_k\colon X_n\smallsetminus X_n(k)\longrightarrow X_{n-1}
$$
is a bijection. This allows us to apply the algorithm in~\cite[Section 3]{Wu2} to
$$
d_k|\colon K_n=F(X_n)\longrightarrow K_{n-1}=F(X_{n-1})
$$
for $0\leq k\leq n-1$ in diagram~(\ref{equation3.6}) and so, by~\cite[Theorem 3.4]{Wu2}, the Moore chains
$$
N_n\calG^{\ast J}=\bigcap_{k=1}^{n-1}\Ker(d_k|\colon K_n\to K_{n-1})
$$
are generated by certain iterated commutators
\begin{equation}\label{equation3.7}
w=\beta_t(x_{i_1,j_1}(\alpha_1)^{\pm1}, x_{i_2,j_2}(\alpha_2)^{\pm1},\ldots, x_{i_t,j_t}(\alpha_t)^{\pm1})
\end{equation}
with $\{2,3,\ldots,n+1\}\in \Index(w)$ and the Moore cycles
$$
\calZ_n\calG^{\ast J}=\bigcap_{k=0}^{n-1}\Ker(d_k|\colon K_n\to K_{n-1})
$$
is generated by certain iterated commutators
\begin{equation}\label{equation3.8}
w=\beta_t(x_{i_1,j_1}(\alpha_1)^{\pm1}, x_{i_2,j_2}(\alpha_2)^{\pm1},\ldots, x_{i_t,j_t}(\alpha_t)^{\pm1})
\end{equation}
with $\{1,2,\ldots,n+1\}\in \Index(w)$. (\textbf{Note.} The
commutator $w$ in ~(\ref{equation3.7}) or ~(\ref{equation3.8}) may
not be in the standard form from left to right.) Since each entry
$x_{i_s,j_s}(\alpha_s)^{\pm1}$ belongs to $R_{i_s,j_s}$, the
commutator $w$ in ~(\ref{equation3.7}) or ~(\ref{equation3.8})
lies in the fat commutator subgroup
$[[R_{i_1,j_1},R_{i_2,j_2},\ldots,R_{i_t,j_t}]]$ and so, by
Lemma~\ref{lemma3.4},
$$
w\in \prod_{\sigma\in \Sigma_t} [[R_{i_{\sigma(1)},j_{\sigma(1)}},R_{i_{\sigma(2)},j_{\sigma(2)}}],\ldots,R_{i_{\sigma(t)},j_{\sigma(t)}}]\leq R[s,s+1,\ldots, n+1],
$$
where $s=2$ in the case of ~(\ref{equation3.7}) and $s=1$ in the
case of ~(\ref{equation3.8}). This finishes the proof of
statement~(\ref{equation3.4}) and hence assertion (1) and (2).

\noindent (3). By assertion (2),
$$
\calZ_n\calG^{\ast J}=\prod_{\{1,2,\ldots,n+1\}\subseteq\{i_1,j_1,i_2,j_2,\ldots,i_t,j_t\}}[[R_{i_1,j_1},R_{i_2,j_2}],\ldots, R_{i_t,j_t}].
$$
From equation~\ref{equation3.2}, we have $d_0x_{i+1,j+1}(\alpha)=x_{i,j}(\alpha)$ for $1\leq i<j\leq n+1$ and $\alpha\in J$. Thus
$$
d_0(R_{i+1,j+1})=R_{i,j}
$$
for $1\leq i<j\leq n+1$. Given a factor $[[R_{i_1,j_1},R_{i_2,j_2}],\ldots, R_{i_t,j_t}]$ in $\calZ_n\calG^{\ast J}$ with $\{1,2,\ldots,n+1\}\subseteq\{i_1,j_1,i_2,j_2,\ldots,i_t,j_t\}$, we have
$$
d_0([[R_{i_1+1,j_1+1},R_{i_2+1,j_2+1}],\ldots, R_{i_t+1,j_t+1}])=[[R_{i_1,j_1},R_{i_2,j_2}],\ldots, R_{i_t,j_t}].
$$
Since $\{2,3,\ldots, n+2\}\subseteq \{i_1+1,j_1+1,i_2+1,j_2+1,\ldots,i_t+1,j_t+1\}$, the subgroup
$$
\prod_{\{1,2,\ldots,n+1\}\subseteq\{i_1,j_1,i_2,j_2,\ldots,i_t,j_t\}}[[R_{i_1+1,j_1+1},R_{i_2+1,j_2+1}],\ldots, R_{i_t+1,j_t+1}]\leq N_{n+1}\calG^{\ast J}
$$
with
$$
d_0\left(\prod_{\{1,2,\ldots,n+1\}\subseteq\{i_1,j_1,i_2,j_2,\ldots,i_t,j_t\}}[[R_{i_1+1,j_1+1},R_{i_2+1,j_2+1}],\ldots, R_{i_t+1,j_t+1}]\right)=\calZ_n\calG^{\ast J}.
$$
It follows that $\calZ_n\calG^{\ast J}\leq \calB_n\calG^{\ast J}$.
Assertion (3) follows and this finishes the proof.
\end{proof}

The following lemma states that $R[1,2,\ldots,n+1]$ can be given by the product of a finite collection of commutator subgroups.
\begin{lem}\label{lemma3.6}
The subgroup $R[1,2,\ldots,n+1]$ of $\calG^{\ast J}_n$ is the product of the following commutator subgroups
$$
[[R_{i_1,j_1},R_{i_{2},j_{2}}],\ldots, R_{i_{t},j_{t}}],
$$
where
\begin{enumerate}
\item[1)] $\{i_1,j_1,i_2,j_2,\ldots,i_t,j_t\}=\{1,2,\ldots,n+1\}$
and \item[2)] $\{i_1,j_1,i_2,j_2,\ldots,i_t,j_t\}\smallsetminus
\{i_p,j_p\}\not=\{1,2,\ldots,n+1\}$ for any $1\leq p\leq t$.
\end{enumerate}
\end{lem}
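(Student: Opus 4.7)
The containment $\supseteq$ is immediate: every commutator satisfying conditions (1) and (2) is already one of the summands whose product defines $R[1,2,\ldots,n+1]$.

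For the reverse direction, the plan is to reduce every left-iterated commutator indexed by a covering sequence to one indexed by a sub-sequence that is covering and has no redundant pair. The entire reduction rests on the following \emph{Drop Lemma}: for any sequence of pairs $(P_1, \ldots, P_s)$ (where each $P_q$ denotes a pair in $\{1,\ldots,n+1\}$) with $s \geq 2$ and any index $1 \leq p \leq s$, setting $L(P_1,\ldots,P_s) := [[R_{P_1}, R_{P_2}], R_{P_3}, \ldots, R_{P_s}]$, one has
$$L(P_1,\ldots,P_s) \;\subseteq\; L(P_1,\ldots,\widehat{P_p},\ldots,P_s).$$

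The Drop Lemma follows from three elementary commutator facts that hold in any group: normality of each $R_{i,j}$ (which gives $[R_A, R_B] \subseteq R_A \cap R_B$); the monotonicity of the commutator bracket (if $A \subseteq A'$ then $[A, B] \subseteq [A', B]$); and $[N, C] \subseteq N$ whenever $N$ is normal. The case $p = s$ is just the last statement applied to the normal subgroup $L(P_1,\ldots,P_{s-1})$. For $p = 1$, one starts from $[R_{P_1}, R_{P_2}] \subseteq R_{P_2}$, which by monotonicity yields $[[R_{P_1},R_{P_2}],R_{P_3}] \subseteq [R_{P_2},R_{P_3}]$, and the containment then propagates through the outer layers by iterated monotonicity. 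The case $p = 2$ is analogous via $[R_{P_1}, R_{P_2}] \subseteq R_{P_1}$. Finally, for an interior index $2 < p < s$, one first truncates $L(P_1,\ldots,P_p) = [L(P_1,\ldots,P_{p-1}), R_{P_p}] \subseteq L(P_1,\ldots,P_{p-1})$ by normality and then propagates outward by monotonicity through positions $p+1,\ldots,s$.

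Granting the Drop Lemma, the $\subseteq$ direction concludes as follows. Given any summand $L(C)$ for a covering sequence $C = ((i_1,j_1),\ldots,(i_t,j_t))$, greedily delete redundant pairs---those whose omission still leaves the index set equal to $\{1,\ldots,n+1\}$---until the surviving sub-sequence $C^{\min}$ has no redundant pair. Then $C^{\min}$ satisfies conditions (1) and (2), and iterated application of the Drop Lemma along the deletions gives $L(C) \subseteq L(C^{\min})$, placing $L(C)$ inside the restricted product. The only point requiring care is the case analysis in the Drop Lemma; strikingly no Hall--Witt or Jacobi-type identity is needed---only normality of the $R_{i,j}$ and monotonicity of $[\cdot,\cdot]$ enter. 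This case analysis is the routine but attention-requiring heart of the proof.
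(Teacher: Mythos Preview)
Your proof is correct and follows essentially the same approach as the paper's: both argue that a redundant entry $R_{i_p,j_p}$ can be dropped because the inner iterated commutator is normal (so $[N,R_{i_p,j_p}]\leq N$), then propagate the containment outward by monotonicity, and iterate until no redundant entries remain. Your ``Drop Lemma'' just makes the case analysis on the position $p$ more explicit than the paper does, but the argument is the same.
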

\begin{proof}
Let $H$ be the product of the commutator subgroups given in the statement. Clearly $H\leq R[1,2,\ldots,n+1]$. Now consider the factor
$$
[[R_{i_1,j_1},R_{i_2,j_2}],\ldots,R_{i_t,j_t}]
$$
with $\{1,2,\ldots,n+1\}=\{i_1,j_1,i_2,j_2,\ldots,i_t,j_t\}$ in $R[1,2,\ldots,n+1]$. If there exists $1\leq p\leq t$ such that
$$
\{i_1,j_1,i_2,j_2,\ldots,i_t,j_t\}\smallsetminus
\{i_p,j_p\}=\{1,2,\ldots,n+1\},
$$
since $[[R_{i_1,j_1}, R_{i_2,j_2}],\ldots,R_{i_{p-1},j_{p-1}}]$ is
normal, we have
$$
[[[R_{i_1,j_1}, R_{i_2,j_2}],\ldots,R_{i_{p-1},j_{p-1}}], R_{i_p,j_p}]\leq [[R_{i_1,j_1}, R_{i_2,j_2}],\ldots,R_{i_{p-1},j_{p-1}}].
$$
(If $p=1$, then we use $[R_{i_1,j_1}, R_{i_2,j_2}]\leq R_{i_2,j_2}.$) It follows that
$$
[[R_{i_1,j_1},R_{i_2,j_2}],\ldots,R_{i_t,j_t}]\leq [[R_{i_1,j_1},R_{i_2,j_2}],\ldots,\hat R_{i_p,j_p},\ldots, R_{i_t,j_t}]
$$
with $\{i_1,j_1,i_2,j_2,\ldots,i_t,j_t\}\smallsetminus \{i_p,j_p\}=\{1,2,\ldots,n+1\}$. By repeating this process by removing surplus entries, we have
$$
[[R_{i_1,j_1},R_{i_2,j_2}],\ldots,R_{i_t,j_t}]\leq H
$$
and hence the result.
\end{proof}

The following simple result is well-known and follows from the
structure of normal forms of free products with amalgamation (for
the proof see, for example, \cite{Emb}):
\begin{lem}\label{cent}
Let $G=G_1*_AG_2$ be a free product with amalgamation such that
$G_1\neq A$ and $G_2\neq A$. Then $Z(G)\leq Z(A)$.
\end{lem}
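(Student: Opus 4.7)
My plan is to reduce Lemma~\ref{cent} to the statement $Z(G)\leq A$, which is enough because any element of $Z(G)$ commutes in particular with every element of $A\leq G$ and therefore lies in $Z(A)$. To prove $Z(G)\leq A$, I will invoke the normal-form theorem for free products with amalgamation: every $g\in G$ admits a unique expression $g=a_0\, c_1 c_2\cdots c_n$ with $a_0\in A$, each $c_i\in G_{\epsilon(i)}\smallsetminus A$ for some side $\epsilon(i)\in\{1,2\}$, and $\epsilon(i+1)\neq\epsilon(i)$. Both the length $n$ and the alternating side-pattern $(\epsilon(1),\ldots,\epsilon(n))$ are invariants of $g$, because the only rewriting moves preserving the reduced shape are $c_i\mapsto c_i a$, $c_{i+1}\mapsto a^{-1}c_{i+1}$ for $a\in A$, and these do not change the side of any factor.

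Suppose for contradiction that some $z\in Z(G)$ has normal form with $n\geq 1$. Since $G_1\neq A$ and $G_2\neq A$, both $G_1\smallsetminus A$ and $G_2\smallsetminus A$ are nonempty, and I may freely pick an element $h$ on whichever side I need. I would then split into two cases according to whether $\epsilon(1)=\epsilon(n)$.

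If $\epsilon(1)=\epsilon(n)$, pick $h\in G_{3-\epsilon(1)}\smallsetminus A$. Then $zh$ and $hz$ are already reduced words of length $n+1$, since at each new junction the two factors sit on opposite sides and so do not combine. However, the side-pattern of $zh$ begins with $\epsilon(1)$ while that of $hz$ begins with $3-\epsilon(1)$, contradicting the uniqueness of the side-pattern. If instead $\epsilon(1)\neq\epsilon(n)$, pick $h\in G_{\epsilon(1)}\smallsetminus A$. Then $zh$ is still reduced of length $n+1$, because $c_n$ and $h$ lie on opposite sides; but in $hz$ the prefix $h a_0 c_1$ lies entirely in $G_{\epsilon(1)}$, so it collapses into a single syllable (possibly in $A$), forcing $\ell(hz)\leq n$. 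Either way $zh\neq hz$, contradicting centrality.

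The only real subtlety is Case~1, where a pure length comparison fails and one must use the stronger fact that the \emph{side-pattern} itself, not merely its length, is invariant under the normal form. Once that invariance is granted, the remaining checks are routine combinatorics of reduced words, and the lemma follows.
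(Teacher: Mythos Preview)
Your argument is correct and follows exactly the route the paper indicates: the paper does not spell out a proof but simply observes that the lemma ``follows from the structure of normal forms of free products with amalgamation'' and cites~\cite{Emb}. Your proposal supplies precisely those normal-form details, and the case split on whether $\epsilon(1)=\epsilon(n)$, together with the invariance of the side-pattern, is the standard way to finish.
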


\subsection{Proof of Theorem~\ref{theorem1.2}}
We use our simplicial group model $\calT(S^k,\alpha)$ for $\Omega S^k$.
Consider the construction of the subgroup $Q_{n,k}$ of $P_n$. By the definition of the simplicial group $\AP_*$, the iterated degeneracy operations on $\AP_1=P_2$ are given by the cabling and so the elements $x_1,\ldots,x_{k-2}$ in Step 1 are the canonical basis for the subgroup
$$
\Theta(F[S^1]_{k-2})\leq \AP_{k-2}=P_{k-1}.
$$
Since $d_ix_i=d_ix_{i+1}$ for $1\leq i\leq k-3$ and $d_0x_1=d_{k-2}x_{k-2}=1$, we have $d_i\alpha_k=1$ for $0\leq i\leq k-2$. It follows that $\alpha_k$ is a Moore cycle in $F[S^1]_{k-2}$ with $\alpha_k\not=1$. The elements $y_j$, $1\leq j\leq \binom{n-1}{k-2}$, are standard basis for the subgroup
$$
\phi_{\alpha_k}(F[S^{k-2}]_{n-1})\leq \AP_{n-1}=P_n
$$
since they are obtained by cabling on $\alpha_k$. It follows that
$$
P_n\ast_{Q_{n,k}}P_n=\left(\AP_*\ast_{F[S^{k-2}]}\AP_*\right)_{n-1}=\calT(S^k;\alpha_k)_{n-1}.
$$
Theorem~\ref{theorem1.2} is a special case of the following slightly more general statement.

\begin{thm}\label{theorem3.7}
Let $k\geq 3$ and let $\alpha\not=1\in F[S^1]_{k-2}$ be a Moore cycle. Then the simplicial group $\calT(S^k;\alpha)\simeq \Omega S^k$ has the following properties:
\begin{enumerate}
\item[1)] In the group $\calT(S^k;\alpha)_{n-1}=P_n\ast_{F[S^{k-2}]_{n-1}}P_n$, the Moore boundaries
$$\calB_{n-1}\calT(S^k;\alpha)=[R_{i,j}\ | \ 1\leq i<j\leq n]_S.$$
\item[2)] The homotopy group $\pi_n(S^k)\cong \pi_{n-1}(\Omega S^k)\cong \pi_{n-1}(\calT(S^k;\alpha))$ is isomorphic to the center of the group
$$
\calT(S^k;\alpha)_{n-1}/\calB_{n-1}\calT(S^k;\alpha)=(P_n\ast_{F[S^{k-2}]_{n-1}}P_n)/[R_{i,j}\ | \ 1\leq i<j\leq n]_S.
$$
for any $n$ if $k>3$ and any $n\not=3$ if $k=3$.
\end{enumerate}
\end{thm}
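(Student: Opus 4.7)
The plan is to combine Theorem \ref{theorem3.3} with the Moore chain calculation of Lemma \ref{lemma3.5} and the center constraint of Lemma \ref{cent}. By Theorem \ref{theorem3.3}, $|\calT(S^k;\alpha)|\simeq\Omega S^k$, and since $\calT(S^k;\alpha)_0 = 1$ (as $P_1 = 1$) the simplicial group is reduced, so Proposition \ref{proposition2.1}(1) yields $\pi_{n-1}(\calT(S^k;\alpha)) \cong \calZ_{n-1}/\calB_{n-1}$. Consequently part (1) suffices to pin down the denominator, and part (2) reduces to the algebraic identification $\calZ_{n-1}/\calB_{n-1} = Z(\calT_{n-1}/\calB_{n-1})$.

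For part (1), I would exploit the natural simplicial epimorphism $\pi\colon \calG^{\ast\{1,2\}} \twoheadrightarrow \calT(S^k;\alpha)$ obtained by sending each free generator $x_{i,j}$ in either factor to the corresponding pure braid generator. This map is a simplicial epimorphism because $\calG_n \twoheadrightarrow \AP_n$ is surjective at each level and free products with amalgamation of surjections are surjective. By Proposition \ref{proposition2.1}(2), $\pi$ induces a surjection on Moore chain complexes, and since $\calB_{n-1} = d_0(N_n)$, also on Moore boundaries. Lemma \ref{lemma3.5}(3) identifies $\calB_{n-1}(\calG^{\ast\{1,2\}})$ with $R[1,2,\ldots,n] = [R_{i,j}\mid 1\leq i<j\leq n]_S$, whose image under $\pi$ is precisely $[R_{i,j}\mid 1\leq i<j\leq n]_S$ in $\calT(S^k;\alpha)_{n-1}$, as normal closures pass through surjections.

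The containment $\calZ_{n-1}/\calB_{n-1} \subseteq Z(\calT_{n-1}/\calB_{n-1})$ in part (2) is a general simplicial fact: for $z\in \calZ_{n-1}$ and any $g\in \calT_{n-1}$, the element $w := [s_0 z, s_0 g]\in \calT_n$ satisfies $d_0 w = [z, g]$ while, using the mixing relation $d_i s_0 = s_0 d_{i-1}$ and the cycle condition $d_{i-1} z = 1$ for $1\leq i\leq n$, $d_i w = [s_0 d_{i-1} z, s_0 d_{i-1} g] = 1$ for all $i\geq 1$. Hence $w\in N_n$ and $[z,g] = d_0 w \in \calB_{n-1}$, so $z$ is central modulo $\calB_{n-1}$.

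The reverse containment $Z(\calT_{n-1}/\calB_{n-1}) \subseteq \calZ_{n-1}/\calB_{n-1}$ is the main obstacle and leans on Lemma \ref{cent}. The plan is to verify that the quotient $\calT_{n-1}/\calB_{n-1}$ inherits a free-product-with-amalgamation structure from $\calT_{n-1} = P_n \ast_{Q_{n,k}} P_n$, using that every $R_{i,j}$ is normal and that $[R_{i,j}]_S$ is generated by iterated commutators of these normal subgroups, so it decomposes compatibly with the two factors and the amalgamating subgroup. For $k > 3$, or $k = 3$ with $n\neq 3$, the images of both $P_n$-factors strictly contain the image of $Q_{n,k}$ in the quotient, so Lemma \ref{cent} forces $Z(\calT_{n-1}/\calB_{n-1})$ to lie inside the image of $Q_{n,k}$. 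One then argues that an element of (the image of) $Q_{n,k}$ which commutes with all generators $A_{i,j}, A'_{i,j}$ modulo $\calB_{n-1}$ must be annihilated by every face operation $d_\ell$, hence represents a Moore cycle; this is the Brunnian-type characterization highlighted in the introduction. The exceptional case $k=3,n=3$ is precisely where the strict-containment hypothesis of Lemma \ref{cent} breaks down, allowing extra central elements beyond $\pi_3(S^3)=\Z$.
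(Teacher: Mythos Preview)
Your argument for part~(1) and for the inclusion $\calZ_{n-1}/\calB_{n-1}\subseteq Z(\calT_{n-1}/\calB_{n-1})$ is fine and matches the paper (the latter inclusion is exactly what underlies the cited \cite[Proposition~2.14]{Wu2}).

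The reverse inclusion, however, has a real gap. Your plan is to apply Lemma~\ref{cent} to the \emph{quotient} $\calT_{n-1}/\calB_{n-1}$, asserting that it ``inherits a free-product-with-amalgamation structure'' from $P_n\ast_{Q_{n,k}}P_n$. This step is not justified and is in general false: the subgroup $\calB_{n-1}=[R_{i,j}\mid 1\le i<j\le n]_S$ is generated by iterated commutators such as $[[A_{i_1,j_1},A'_{i_2,j_2}],\ldots]$ that genuinely mix elements from the two $P_n$-factors, so it is \emph{not} the normal closure of subsets of the two factors separately, and $G_1\ast_A G_2$ modulo such a normal subgroup need not be an amalgam at all. (Indeed, Example~\ref{example3.8} computes the exceptional quotient for $k=n=3$ and finds it is a direct product $(\Z/m\ast\Z/m)^3$ modulo a central subgroup, not an amalgam of two copies of $P_3$-images.) Once this structural claim fails, Lemma~\ref{cent} gives you nothing about the center of the quotient, and your subsequent ``Brunnian-type'' step---that a central element in the image of $Q_{n,k}$ is killed by every $d_\ell$---is left as an unproved assertion.

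The paper avoids this difficulty by applying Lemma~\ref{cent} to the \emph{unquotiented} groups $\calT(S^k;\alpha)_q=P_{q+1}\ast_{F[S^{k-2}]_q}P_{q+1}$, which genuinely are amalgamated free products. One checks $Z(\calT(S^k;\alpha)_q)=1$ for all $q$ (when $k>3$) or all $q\ge 2$ (when $k=3$): for $q\ge k-1$ the amalgamating subgroup $F[S^{k-2}]_q$ is free of rank $\ge 2$, hence centerless; for $q=k-2$ one uses that the center of $P_{k-1}$ is generated by the full-twist $\Delta^2$, which is not Brunnian when $k>3$, so no power of the Brunnian element $\alpha$ can be central; and for $q<k-2$ the amalgam degenerates to a free product. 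Having $Z(\calT_q)=1$, one then invokes \cite[Proposition~2.14]{Wu2}, which is precisely the statement that trivial center in each degree forces $Z(\calT_q/\calB_q)=\calZ_q/\calB_q$. The low-degree cases $q=0,1$ for $k=3$ are handled by direct inspection.
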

\begin{proof}
(1). By definition, the simplicial group $\calT(S^k;\alpha)$ is
given by the free product with amalgamation
$\AP_*\ast_{F[S^{k-2}]}\AP_*$. Thus $\calT(S^k;\alpha)$ is a
simplicial quotient group of the free product $\AP_*\ast \AP_*$.
Let $\calG$ be the simplicial group given in
Subsection~\ref{subsection3.4}. Then $\AP_*$ is a simplicial
quotient group of $\calG$. It follows that there is a simplicial
epimorphism
$$
g\colon \calG\ast\calG\rOnto \calT(S^k;\alpha).
$$
By Proposition~\ref{proposition2.1},
$$
N(g)=g|\colon N_n(\calG\ast\calG)\longrightarrow N_n(\calT(S^k;\alpha)
$$
is an epimorphism and so
$$
\begin{array}{rcl}
\calB_{n-1}(\calT(S^k;\alpha))&=&d_0(N_n(\calT(S^k;\alpha)))\\
&=&d_0(g(N_n(\calG\ast\calG)))\\
&=&g(d_0(N_n(\calG\ast\calG)))\\
&=&g(\calB_{n-1}(\calG\ast\calG)).\\
\end{array}
$$
Assertion (1) follows from Lemma~\ref{lemma3.5}.

(2). \textbf{Case I.} $k>3$. Since
$\calT(S^k;\alpha)_q=P_{q+1}\ast_{F[S^{k-2}]_q}P_{q+1}$ is a free
product with amalgamation, the center $Z(\calT(S^k;\alpha)_q)\leq
Z(F[S^{k-2}]_q)=\{1\}$ for $q\geq k-1$ by Lemma \ref{cent}. For
$q=k-2$, then $Z(\calT(S^k;\alpha)_{k-2})\leq
F[S^{k-2}]_{k-2}=\la\alpha\ra=\Z$ by Lemma \ref{cent}. Since
$\alpha$ is Moore cycle, $\alpha$ is a Brunnian braid in
$P_{k-1}$. Recall that the center of $P_{k-1}$ is given by the
full-twist braid $\Delta^2$~\cite{Chow} with the property that, by
removing any one of the strands of $\Delta^2$, it becomes a
generator for the center of $P_{k-1}$ and $d_i\Delta^2\not=1$ for
$k>3$. Since $\alpha$ is Brunnian braid, any power
$\alpha^m\not\in Z(P_{k-1})$ for $m\not=0$. It follows that
$\alpha^m\not\in Z(P_{k-1}\ast_{F[S^{k-2}]_{k-2}}P_{k-1})$ for
$m\not=0$. Thus $Z(\calT(S^k;\alpha)_{k-2})=\{1\}$. For $q< k-2$,
$\calT(S^k,\alpha)_q$ is a free product and so
$Z(\calT(S^k,\alpha)_q)=\{1\}$, where for the low cases,
$\calT(S^k;\alpha)_1=P_2\ast P_2$ is a free group of rank $2$ and
$\calT(S^k;\alpha)_0=\{1\}$. Thus the center
$Z(\calT(S^k;\alpha)_q=\{1\}$ for all $q\geq0$. It follows
from~\cite[Proposition 2.14]{Wu2} that
$$
\pi_q(\calT(S^k;\alpha))\cong Z(\calT(S^k;\alpha)_q/\calB_q(\calT(S^k;\alpha))
$$
for $q\geq1$. This isomorphism also holds for $q=0$ because $\calT(S^k;\alpha)_0=\{1\}$.

\textbf{Case II.} $k=3$. By the same arguments as above, we have $Z(\calT(S^3;\alpha)_q)=\{1\}$ for $q\geq 2$. By~\cite[Proposition 2.14]{Wu2}, we have
\begin{equation}\label{equation3.9}
\pi_q(\calT(S^3;\alpha))\cong Z(\calT(S^3;\alpha)_q/\calB_q(\calT(S^3;\alpha))
\end{equation}
for $q\geq3$. We only need to check that this isomorphism also
holds for the cases $q=0,1$. (The case that $q=2$ is the
exceptional case, which is excluded in the statement.) When $q=0$,
both sides are trivial groups. Consider the case $q=1$. Note that
$\AP_1=P_2\cong\Z$ generated by $A_{12}$. Since $\alpha$ is not
trivial, it is given by a nontrivial power of $A_{12}$. Let
$\alpha=A_{12}^m$ for some $m\not=0$. Then $\calT(S^3;\alpha)_1$
is given by the pushout diagram
\begin{diagram}
P_2=\Z&\rTo^{m}&P_2=\Z\\
\dTo>{m}&&\dTo\\
P_2=\Z&\rTo&\calT(S^3;\alpha)_1.\\
\end{diagram}
Since $R_{1,2}=\la A_{1,2},A'_{1,2}\ra^{\calT(S^3;\alpha)_1}=\calT(S^3,\alpha)_1$ because $\calT(S^3;\alpha)_1$ is generated by $A_{1,2}$ and $A'_{1,2}$, we have
$$
\calB_1(\calT(S^3;\alpha))=\calT(S^3;\alpha)_1
$$
and so
$$
Z(\calT(S^3;\alpha)_1/\calB_1(\calT(S^3;\alpha))=\calT(S^3;\alpha)_1/\calB_1(\calT(S^3;\alpha)=\{1\}.
$$
On the other hand,
$$
\pi_1(\calT(S^3;\alpha))=\pi_1(\Omega S^3)=\pi_2(S^3)=\{1\}.
$$
Thus isomorphism~(\ref{equation3.9}) holds for $q=1$. This finishes the proof.
\end{proof}

\begin{example}\label{example3.8}
{\rm In this example, we discuss the exceptional case by
determining the center of the group:
$$
G=(P_3\ast_{F[S^{1}]_{2}}P_3)/[R_{i,j}\ | \ 1\leq i<j\leq 3]_S,
$$
where $\alpha=A_{1,2}^m$ with some $m\not=0$. By definition, the subgroup $F[S^1]_2\leq P_3$ is generated by $x_1=s_1\alpha_3=(A_{1,3}A_{2,3})^m$ and $x_2=s_0\alpha_3=(A_{1,2}A_{1,3})^m.$ Thus the free product with amalgamation $P_3\ast_{F[S^{1}]_{2}}P_3$ is given as the quotient group of $P_3\ast P_3$ by the new relations:
\begin{equation}\label{equation3.10}
(A_{1,3}A_{2,3})^m=(A'_{1,3}A'_{2,3})^m\textrm{ and } (A_{1,2}A_{1,3})^m=(A'_{1,2}A'_{1,3})^m.
\end{equation}
Consider the subgroup $[R_{i,j}\ | \ 1\leq i<j\leq 3]_S$ of $P_3\ast_{F[S^{1}]_{2}}P_3$. Observe that
$$
[A_{1,2},A_{1,3}],[A_{1,2},A_{2,3}],[A_{1,3},A_{2,3}] \in [R_{i,j}\ | \ 1\leq i<j\leq 3]_S,
$$
the subgroup $\la A_{1,2},A_{1,3},A_{2,3}\ra$ is abelian in $G$. Similarly the subgroup $\la A'_{1,2},A'_{1,3}, A'_{2,3}\ra$ is abelian in $G$. Thus $(A'_{1,2}A'_{1,3})^m=(A'_{1,2})^m(A'_{1,3})^m$ in $G$ and from equation~(\ref{equation3.10})
$$
(A'_{1,2})^m=(A_{1,2})^m(A_{1,3})^m(A'_{1,3})^{-m}.
$$
It follows that $A'_{1,2}$ commutes with $A_{1,2}$ since $A_{1,2}$
commutes with $(A_{1,2})^m, (A_{1,3})^m$ and $(A'_{1,3})^m$. From
this, we conclude that $(A'_{1,2})^m\in Z(G)$ because $A'_{1,2}$
commutes with all of the generators for $G$. Similarly
$(A_{1,2})^m, (A_{1,3})^m, (A_{2,3})^m, (A'_{1,3})^m,
(A'_{2,3})^m\in Z(G)$. Thus the subgroup
\begin{equation}\label{equation3.11}
H=\la ((A_{1,2})^m, (A_{1,3})^m, (A_{2,3})^m, (A'_{1,2})^m,  (A'_{1,3})^m, (A'_{2,3})^m\ra\leq Z(G).
\end{equation}
Let
$$
G'=(\Z(A_{1,2})/m\ast \Z(A'_{1,2})/m)\times (\Z(A_{1,3})/m\ast
\Z(A'_{1,3})/m)\times (\Z(A_{2,3})/m\ast \Z(A'_{2,3})/m)
$$
and let $\phi\colon P_3\ast P_3\to G'$ be the canonical quotient
homomorphism defined by sending generators to generators. Then
$$
\phi(x_1)=\phi(x_2)=1.
$$
Moreover $\phi([R_{i,j}\ | \ 1\leq i<j\leq 3]_S)=1$ with $\phi(H)=1$ and so $\phi$ induces an epimorphism $\bar\phi$ in the following diagram:
\begin{diagram}
P_3\ast P_3&\rOnto^{\phi}& G'=(\Z/m\ast\Z/m)\times (\Z/m\ast\Z/m)\times (\Z/m\ast\Z/m)\\
\dOnto>{q}&\ruOnto^{\bar\phi}&\\
G/H.&&\\
\end{diagram}
On the other hand, the group homomorphism
$$
\Z(A_{1,2})\ast \Z(A'_{1,2})\longrightarrow G/H
$$
factors through the quotient $\Z(A_{1,2})/m\ast \Z(A'_{1,2})/m$.
Similarly there are canonical group homomorphisms from
$\Z(A_{1,3})/m\ast \Z(A'_{1,3})/m$ and $\Z(A_{1,2})/m\ast
\Z(A'_{1,2})/m$ to $G/H$. Since the subgroup $\la
A_{1,2},A'_{1,2}\ra$, $\la A_{1,3},A'_{1,3}\ra$ and $\la
A_{2,3},A'_{2,3}\ra$ commute with each other in the group $G$,
there is a group epimorphism
$$
\psi\colon G'\to G/H
$$
such that $\bar\phi\circ \psi=\id_{G'}$ since all of generators of
$G/H$ lie in the image of $\psi$. It follows that
$$
G/H\cong G'=(\Z/m\ast\Z/m)\times (\Z/m\ast\Z/m)\times (\Z/m\ast\Z/m).
$$
Since $Z(G')=\{1\}$, $Z(G/H)=\{1\}$ and so
$$
Z(G)\leq H.
$$
Together with equation~(\ref{equation3.11}), we have $Z(G)=H\cong \Z^{\oplus 4}$. \hfill $\Box$
}\end{example}

\vspace{.5cm}
\section{Description of Homotopy Groups of the Moore Spaces $M(\Z/q,k)$ with $k\geq3$}\label{section4}
\vspace{.5cm} In this section, we give an explicit combinatorial
description of the homotopy groups of the Moore spaces $M(\Z/q,k)$
with $k\geq 3$. This description highlights our methodology for
giving combinatorial descriptions of homotopy groups using free
products of braid groups.

\subsection{An Embedding of $F[S^{k-1}]$ into $\calT(S^k;\alpha)$ for Moore Boundaries $\alpha$}
Let $\tilde \alpha\in N_{k-1}F[S^1]$ with $d_0\tilde\alpha\not=1$. We are going to construct a simplicial monomorphism $F[S^{k-1}]\to \calT(S^k;d_0\tilde\alpha)$, which is also a homotopy equivalence.

Let
$$
f_{\tilde\alpha}\colon \Delta[k-1]\longrightarrow F[S^1]
$$
be the representing map of the element $\tilde\alpha$ with $f_{\tilde \alpha}(\sigma_k)=\tilde\alpha$, where $\sigma_k=(0,1,\ldots,k-1)\in \Delta[k-1]$. Let $\Lambda^0[k-1]$ be the simplicial subset of $\Delta[k-1]$ generated by $d_j\sigma_{k-1}$ for $j>0$ and let
$$
\bar\Delta[k-1]=\Delta[k-1]/\Lambda^0[k-1].
$$
Since $d_j\tilde\alpha=1$ for $j>0$, the simplicial map $f_{\tilde\alpha}$ factors through the simplicial quotient $\bar\Delta[k-1]$. Let
\begin{equation}\label{equation4.1}
\bar f_{\tilde\alpha}\colon \bar\Delta[k-1]\longrightarrow F[S^1]
\end{equation}
be the resulting simplicial map with $\bar f_{\tilde\alpha}(\sigma_{k-1})=\tilde\alpha$. By the universal property of Milnor's construction, there exists a unique simplicial homomorphism
\begin{equation}\label{equation4.2}
\theta_{\tilde\alpha}\colon F[\bar\Delta[k-1]]\longrightarrow F[S^1]
\end{equation}
such that
$\theta_{\tilde\alpha}|_{\bar\Delta[k-1]}=\bar f_{\tilde\alpha}$.

\begin{lem}\label{lemma4.1}
The simplicial group $F[\bar\Delta[k-1]]$ is contractible and the map
$$
\theta_{\tilde\alpha}\colon F[\bar\Delta[k-1]]\longrightarrow F[S^1]
$$
is a simplicial monomorphism.
\end{lem}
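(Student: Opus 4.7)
The plan is to prove the two assertions separately, treating contractibility geometrically and handling the monomorphism via the Lie-algebra template of Lemma~\ref{lemma3.2}.

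For contractibility, note that $\Lambda^0[k-1]$ is a horn, which deformation retracts onto its initial vertex and is therefore contractible. Its inclusion into the contractible $\Delta[k-1]$ is a cofibration of simplicial sets, whence $|\bar\Delta[k-1]|=|\Delta[k-1]|/|\Lambda^0[k-1]|$ is contractible. Milnor's equivalence $|F[K]|\simeq\Omega\Sigma|K|$ then yields $F[\bar\Delta[k-1]]\simeq *$.

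For the monomorphism, set $G'=\theta_{\tilde\alpha}(F[\bar\Delta[k-1]])\subseteq F[S^1]$; each $G'_q$ is free as a subgroup of the free group $F[S^1]_q$, and each $F[\bar\Delta[k-1]]_q$ is free and residually nilpotent with LCS-associated graded equal to the free Lie algebra on its abelianization. Exactly as in Lemma~\ref{lemma3.2}, it therefore suffices to prove that $\theta_{\tilde\alpha}^{\ab}\colon\Z[\bar\Delta[k-1]]\to G'^{\ab}$ is a simplicial isomorphism, which by Proposition~\ref{proposition2.1}(2) reduces to checking isomorphism on Moore chain complexes in every degree. The key combinatorial simplification is that $\bar\Delta[k-1]$ has only two non-basepoint non-degenerate simplices: $\tau=(1,\dots,k-1)$ in dimension $k-2$ and $\sigma=(0,1,\dots,k-1)$ in dimension $k-1$, related by $d_0\sigma=\tau$. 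Consequently $N_q\Z[\bar\Delta[k-1]]$ equals $\Z$ in degrees $k-2$ and $k-1$ (with identity differential) and vanishes elsewhere.

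Degrees $q<k-2$ are vacuous, and for $q=k-2$ the group $G'_{k-2}=\la d_0\tilde\alpha\ra$ is infinite cyclic (a nontrivial cyclic subgroup of the free group $F[S^1]_{k-2}$ since $d_0\tilde\alpha\neq 1$), so the Moore chain map is the surjection $\Z\twoheadrightarrow\Z$, hence an isomorphism. The hard part will be degree $q=k-1$, where I must show $N_{k-1}G'^{\ab}\cong\Z$ generated by the image of $\sigma$, equivalently that the $k$ elements $\tilde\alpha,\,s_0d_0\tilde\alpha,\dots,s_{k-2}d_0\tilde\alpha$ freely generate $G'_{k-1}$. Lemma~\ref{lemma3.2} applied to the nontrivial Moore cycle $d_0\tilde\alpha$ already identifies the subgroup generated by the degeneracies with $\tilde f_{d_0\tilde\alpha}(F[S^{k-2}]_{k-1})$, free of rank $k-1$. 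The remaining point is that $\tilde\alpha$ is independent of this subgroup: a relation $\tilde\alpha=w(s_0d_0\tilde\alpha,\dots,s_{k-2}d_0\tilde\alpha)$ would, upon applying the faces $d_0,d_1,\dots,d_{k-1}$ and using the Moore-chain identities $d_j\tilde\alpha=1$ for $j\geq1$ together with the computation $d_jd_0\tilde\alpha=1$, force the exponent sums $e_i$ of the variables in $w$ to satisfy the chain $e_{j-1}+e_j=0$ ($1\leq j\leq k-2$) and $e_{k-2}=0$ coming from the higher faces while simultaneously satisfying $e_0=1$ coming from $d_0$ (since $d_0\tilde\alpha$ has infinite order in the free group $F[S^1]_{k-2}$), a contradiction. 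A Hopfian rank argument then yields that $G'_{k-1}$ is free of rank $k$, and an entirely parallel bookkeeping in higher dimensions, based on the clean decomposition of each non-basepoint simplex of $\bar\Delta[k-1]_n$ as a unique iterated degeneracy of $\sigma$ or of $\tau$, establishes the isomorphism of Moore chain complexes in every degree and completes the proof.
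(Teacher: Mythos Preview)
Your overall strategy matches the paper's: contractibility via Milnor's equivalence, and the monomorphism via the Lie-algebra reduction of Lemma~\ref{lemma3.2}, reducing to showing that $\theta_{\tilde\alpha}^{\ab}$ is an isomorphism on Moore chains. The execution of that last step, however, is overcomplicated and contains a genuine gap.

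The gap is in degree $k-1$. You argue that $\tilde\alpha\notin\la s_0\alpha,\dots,s_{k-2}\alpha\ra$ (correctly, via exponent sums), then invoke a ``Hopfian rank argument'' to conclude $G'_{k-1}$ is free of rank $k$. But knowing that one generator lies outside the subgroup spanned by the others does \emph{not} bound the rank from below. For instance, in $F(a,b)$ one has $a\notin\la b,[a,b]\ra$, and $\la b,[a,b]\ra$ is free of rank $2$, yet $\la a,b,[a,b]\ra=F(a,b)$ has rank $2$, not $3$. Your exponent-sum computation could be salvaged by running it on an \emph{arbitrary} relation among all $k$ generators (not just the relation $\tilde\alpha=w$), thereby showing $G'_{k-1}$ surjects onto $\Z^k$; but as written the step fails.

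The paper avoids all of this, and you should too. Since $\theta_{\tilde\alpha}^{\ab}$ is a simplicial epimorphism, Proposition~\ref{proposition2.1}(2) gives that $N_q\theta_{\tilde\alpha}^{\ab}$ is onto for every $q$; hence $N_qG'^{\ab}=0$ automatically for $q\neq k-2,k-1$ (no ``parallel bookkeeping in higher dimensions'' is needed), and $N_{k-1}G'^{\ab}$ is a cyclic quotient of $\Z$. For $q=k-2$ you already have $N_{k-2}G'^{\ab}=G'^{\ab}_{k-2}=\la d_0\tilde\alpha\ra\cong\Z$. For $q=k-1$, simply observe that the Moore differential $d_0\colon N_{k-1}G'^{\ab}\to N_{k-2}G'^{\ab}\cong\Z$ sends the class of $\tilde\alpha$ to the generator $d_0\tilde\alpha$; thus the class of $\tilde\alpha$ has infinite order and $N_{k-1}G'^{\ab}\cong\Z$. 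This is the content of the paper's terse ``computation'' and replaces your rank argument entirely.
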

\begin{proof}
Recall~\cite{Curtis} that the geometric realization $|\Delta[k-1]|$ is the standard $(k-1)$-simplex $\Delta^{k-1}$ and $|\Lambda^0[k]|$ is the union of all faces of $\Delta^{k-1}$ except the first face. Thus both $|\Delta[k-1]|$ and $|\Lambda^0[k]|$ are contractible and so is $|\bar\Delta[k-1]|=|\Delta[k-1]/\Lambda^0[k-1]|$. It follows that
$$
|F[\bar \Delta[k-1]]|\simeq\Omega\Sigma|\bar\Delta[k-1]|
$$
is contractible.

The proof of the statement regarding $\theta_{\tilde\alpha}$ is
similar to that of Lemma~\ref{lemma3.2}. The image
$\theta_{\tilde\alpha}(F[\bar\Delta[k-1]])$ is a simplicial free
group because it is a simplicial subgroup of the simplicial free
group $F[S^1]$. Following the lines in the proof of
Lemma~\ref{lemma3.2},  for checking that
$\theta_{\tilde\alpha}\colon F[\bar\Delta[k-1]]\to
\theta_{\tilde\alpha}(F[\bar\Delta[k-1]])$ is a simplicial
monomorphism, it suffices to show that
$$
N\theta_{\tilde\alpha}^{\ab}\colon N_*F[\bar\Delta[k-1]]^{\ab}\longrightarrow N_*\theta_{\tilde\alpha}(F[\bar\Delta[k-1]])
$$
is an isomorphism. This follows directly from the computations that
$$
N_qF[\bar\Delta[k-1]]^{\ab}=\left\{
\begin{array}{lcl}
\Z(\sigma_{k-1})&\textrm{ for }& q=k-1,\\
\Z(d_0\sigma_{k-1})&\textrm{ for }& q=k-2,\\
0&&\textrm{otherwise},\\
\end{array}\right.
$$
$$
N_q\theta_{\tilde\alpha}(F[\bar\Delta[k-1]])^{\ab}=\left\{
\begin{array}{lcl}
\Z(\tilde\alpha)&\textrm{ for }& q=k-1,\\
\Z(d_0\tilde\alpha=\alpha)&\textrm{ for }& q=k-2,\\
0&&\textrm{otherwise}\\
\end{array}\right.
$$
and $\theta_{\tilde\alpha}(\sigma_{k-1})=\tilde\alpha$.
\end{proof}

Now from the above Lemma, the simplicial monomorphism
$$
\phi_{\alpha}\colon F[S^{k-2}]\longrightarrow \AP_*
$$
is given by the composite
$$
F[S^{k-2}]\rInto^{\iota}F[\bar\Delta[k-1]]\rInto^{\theta_{\tilde\alpha}} F[S^1]\rInto^{\Theta} \AP_*.
$$
It follows that $\Theta\circ\theta_{\tilde\alpha}\colon F[\bar\Delta[k-1]]\to \AP_*$ induces a simplicial monomorphism
\begin{equation}\label{equation4.3}
F[\bar\Delta[k-1]]\ast_{F[S^{k-2}]}F[\bar\Delta[k-1]]\rInto \AP_*\ast_{F[S^{k-2}]}\AP_*,
\end{equation}
which is a homotopy equivalence by Theorem~\ref{theorem2.1}. Let $\sigma'_{k-1}$ denote the element $\sigma_{k-1}$ in second copy of $F[\bar\Delta[k-1]]$ in the free product with amalgamation $F[\bar\Delta[k-1]]\ast_{F[S^{k-2}]}F[\bar\Delta[k-1]]$. Let
$$
z_{k-1}=\sigma_{k-1}(\sigma'_{k-1})^{-1}\in \left(F[\bar\Delta[k-1]]\ast_{F[S^{k-2}]}F[\bar\Delta[k-1]]\right)_{k-1}.
$$
Then $z_{k-1}$ is a Moore cycle because
$$
d_jz_{k-1}=d_j\sigma_{k-1} (d_j\sigma'_{k-1})^{-1}=1
$$
for $j>0$ in $F[\bar\Delta[k-1]]\ast_{F[S^{k-2}]}F[\bar\Delta[k-1]]$ and
$$
d_0z_{k-1}=d_0\sigma_{k-1} (d_0\sigma'_{k-1})^{-1}=1
$$
since $d_0\sigma_{k-1}=d_0\sigma'_{k-1}$ lies in the amalgamated
subgroup $F[S^{k-2}]$. Let $f_{z_{k-1}}\colon S^{k-1}\to
F[\bar\Delta[k-1]]\ast_{F[S^{k-2}]}F[\bar\Delta[k-1]]$ be the
representing map of $z_{k-1}$ and let
$$
\tilde f_{z_{k-1}}\colon F[S^{k-1}]\longrightarrow F[\bar\Delta[k-1]]\ast_{F[S^{k-2}]}F[\bar\Delta[k-1]]
$$
be the simplicial homomorphism induced by $f_{z_{k-1}}$.

\begin{lem}\label{lemma4.2}
Let $\tilde f_{z_{k-1}}$ be defined as above.
Then
\begin{enumerate}
\item[1)] $\tilde f_{z_{k-1}}$ is a simplicial monomorphism.
\item[2)] $\tilde f_{z_{k-1}}$ is a homotopy equivalence.
\end{enumerate}
\end{lem}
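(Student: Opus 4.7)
The plan is to establish part (2) first by a classifying-space computation, and then to derive part (1) by paralleling the Moore-complex argument from Lemma~\ref{lemma3.2}.

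For part (2), I apply the Whitehead Theorem (Theorem~\ref{theorem2.1}) to the amalgamated free product, identifying $\bar W(F[\bar\Delta[k-1]]\ast_{F[S^{k-2}]}F[\bar\Delta[k-1]])$ with the homotopy pushout of
$$
\bar W F[\bar\Delta[k-1]]\longleftarrow \bar W F[S^{k-2}]\longrightarrow \bar W F[\bar\Delta[k-1]].
$$
By Lemma~\ref{lemma4.1}, $|F[\bar\Delta[k-1]]|$ is contractible, and since $F[\bar\Delta[k-1]]$ is reduced (its $0$-simplices are collapsed in $\bar\Delta[k-1]$ for $k\geq 3$), the simply connected space $|\bar W F[\bar\Delta[k-1]]|$ has trivial higher homotopy and is thus contractible. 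Meanwhile $|\bar W F[S^{k-2}]|\simeq \Sigma|S^{k-2}|=S^{k-1}$. The pushout is therefore $\Sigma S^{k-1}=S^k$, so the target is weakly equivalent to $\Omega S^k\simeq |F[S^{k-1}]|$. To conclude that $\tilde f_{z_{k-1}}$ itself is a homotopy equivalence, I identify $[z_{k-1}]$ as a generator of $\pi_{k-1}$ of the target: viewing $S^k$ as two cones joined along $S^{k-1}$, the loop determined by $z_{k-1}=\sigma_{k-1}(\sigma'_{k-1})^{-1}$ is precisely the clutching difference of these cones and so represents the fundamental class under $\pi_k(S^k)\cong\pi_{k-1}(\Omega S^k)$. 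Since $F[S^{k-1}]$ is the James construction and is the free simplicial group on $\sigma_{k-1}$, the induced H-map between two copies of $\Omega S^k$ is a degree-one map on the bottom class and is therefore a weak equivalence.

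For part (1), follow the strategy in the proof of Lemma~\ref{lemma3.2}. Let $G:=\tilde f_{z_{k-1}}(F[S^{k-1}])$ denote the simplicial image; then $F[S^{k-1}]\to G$ is a simplicial epimorphism, and by Proposition~\ref{proposition2.1}(2) it suffices to promote this to a simplicial isomorphism. Each $F[S^{k-1}]_n$ is a free group, hence residually nilpotent; as in Lemma~\ref{lemma3.2}, the problem reduces via a comparison of the associated lower-central-series Lie algebras to showing that
$$
\tilde f^{\ab}_{z_{k-1}}\colon F[S^{k-1}]^{\ab}=\tilde\Z[S^{k-1}]\longrightarrow G^{\ab}
$$
is a simplicial isomorphism (provided each $G_n$ also has its Lie algebra freely generated by $G_n^{\ab}$). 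The Moore complex of $\tilde\Z[S^{k-1}]$ is concentrated in degree $k-1$ with value $\Z$ generated by $\sigma_{k-1}$; since $\tilde f^{\ab}_{z_{k-1}}$ is a simplicial epimorphism, $N_qG^{\ab}=0$ for $q\neq k-1$ and $N_{k-1}G^{\ab}$ is a cyclic quotient of $\Z$. To see this quotient is all of $\Z$, verify that $z_{k-1}$ has infinite order in $G_{k-1}^{\ab}$; this follows either directly from normal-form considerations in the amalgamated product (since $\sigma_{k-1},\sigma'_{k-1}\notin F[S^{k-2}]_{k-1}$) or by composing with the simplicial monomorphism from~(\ref{equation4.3}) into $\AP_*\ast_{F[S^{k-2}]}\AP_*$ and invoking part~(2).

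The principal obstacle is justifying the free-Lie-algebra reduction in part~(1): unlike in Lemma~\ref{lemma3.2}, the image $G_n$ lives inside a free product with amalgamation and is not automatically a free group. One must verify that each $G_n$ is in fact free of rank $\binom{n}{k-1}$, so that its lower-central-series Lie algebra is the free Lie algebra on $G_n^{\ab}$. I expect this to be handled via Bass--Serre theory applied to the action of $G_n$ on the tree of the amalgamated product $F[\bar\Delta[k-1]]_n\ast_{F[S^{k-2}]_n}F[\bar\Delta[k-1]]_n$, exhibiting the generators $\{s_{j_{n-k+1}}\cdots s_{j_1}z_{k-1}\}$ as a free basis in normal form.
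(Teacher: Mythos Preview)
Your argument for part~(2) is sound in spirit, but for part~(1) you have manufactured an obstacle that the paper dissolves in one line. The key observation you are missing is that
\[
F[\bar\Delta[k-1]]\ast_{F[S^{k-2}]}F[\bar\Delta[k-1]]\;=\;F\bigl[\bar\Delta[k-1]\cup_{S^{k-2}}\bar\Delta[k-1]\bigr],
\]
since Milnor's functor $F[-]$ is a left adjoint and hence takes the pushout of pointed simplicial sets to the pushout of groups. In particular the target is a simplicial \emph{free} group, so its image subgroup $G_n$ is automatically free by Nielsen--Schreier, and the Lie-algebra reduction from Lemma~\ref{lemma3.2} goes through verbatim. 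There is no need for Bass--Serre theory or normal-form arguments.

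For part~(2) the paper takes a somewhat different and more algebraic route than yours: rather than invoking Theorem~\ref{theorem2.1} and a geometric clutching picture, it uses the same identification above to compute the Moore chain complex of the abelianization $F[\bar\Delta[k-1]\cup\bar\Delta[k-1]]^{\ab}$ directly. This is $\Z\sigma_{k-1}\oplus\Z\sigma'_{k-1}$ in degree $k-1$ and $\Z(d_0\sigma_{k-1})$ in degree $k-2$, with the obvious differential, so $\pi_{k-1}$ is visibly generated by $\sigma_{k-1}-\sigma'_{k-1}$, the abelianized image of $z_{k-1}$. Since both source and target are loop spaces of $S^k$ and the map is a loop map hitting the bottom class, it is a homotopy equivalence. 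Your pushout-of-classifying-spaces argument reaches the same conclusion, but the paper's computation is shorter and avoids appealing to a geometric interpretation of $z_{k-1}$.
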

\begin{proof}
(1). Observe that
$$
F[\bar\Delta[k-1]]\ast_{F[S^{k-2}]}F[\bar\Delta[k-1]]=F[\bar\Delta[k-1]\cup\bar\Delta[k-1]]
$$
is a simplicial free group, where
$\bar\Delta[k-1]\cup\bar\Delta[k-1]$ is the simplicial union by
identification $d_0\sigma_{k-1}$ with $d_0\sigma'_{k-1}$.
Assertion (1) follows from the lines of the proof of
Lemma~\ref{lemma3.2}.

(2). Since
$$
\tilde f_{z_{k-1}}\colon F[S^{k-1}]\simeq \Omega S^k\longrightarrow F[\bar\Delta[k-1]]\ast_{F[S^{k-2}]}F[\bar\Delta[k-1]]\simeq \Omega S^k
$$
is a simplicial homomorphism, it is a loop map. Thus it suffices to show that $\tilde f_{z_{k-1}}$ induces an isomorphism
$$
\tilde f_{z_{k-1}\ast}\colon\pi_{k-1}(F[S^{k-1}])\cong \Z\rTo \pi_{k-1}(F[\bar\Delta[k-1]]\ast_{F[S^{k-2}]}F[\bar\Delta[k-1]])\cong\Z.
$$
Note that
$$
\pi_{k-1}(F[\bar\Delta[k-1]]\ast_{F[S^{k-2}]}F[\bar\Delta[k-1]])\cong \pi_{k-1}(F[\bar\Delta[k-1]\cup\bar\Delta[k-1]]^{\ab}).
$$
Now the Moore chain complex of $F[\bar\Delta[k-1]\cup\bar\Delta[k-1]]^{\ab}$ is given by
$$
N_qF[\bar\Delta[k-1]\cup\bar\Delta[k-1]]^{\ab}=\left\{
\begin{array}{lcl}
\Z(\sigma_{k-1})\oplus\Z(\sigma'_{k-1})&\textrm{ for }& q=k-1,\\
\Z(d_0\sigma_{k-1}=d_0\sigma'_{k-1})&\textrm{ for }&q=k-2,\\
0&&\textrm{otherwise.}\\
\end{array}\right.
$$
Thus $\pi_{k-1}(F[\bar\Delta[k-1]\cup\bar\Delta[k-1]]^{\ab})$ is
generated by $\sigma_{k-1}-\sigma'_{k-1}$, which is the image of
$z_{k-1}$ in the abelianization
$F[\bar\Delta[k-1]\cup\bar\Delta[k-1]]^{\ab}$. It follows that
$$
\tilde f_{z_{k-1}\ast}\colon\pi_{k-1}(F[S^{k-1}])\rTo \pi_{k-1}(F[\bar\Delta[k-1]]\ast_{F[S^{k-2}]}F[\bar\Delta[k-1]]).
$$
is an isomorphism and hence the result.
\end{proof}

\subsection{Description for $\pi_*(M(\Z/q,k))$ with $k\geq3$}
With the preparation in the previous subsection, we can now
construct a simplicial group model for $\Omega M(\Z/q,k)$ with
$k\geq3$. Let $\alpha\in \calZ_{k-1}F[S^1]$ be a Moore cycle with
$\alpha\not=1$ and let $\tilde\alpha\in N_{k-1}F[S^1]$ be a Moore
chain such that $d_0\tilde\alpha\not=1$. From Lemma~\ref{lemma4.2}
together with isomorphism~(\ref{equation4.3}), there is a
simplicial monomorphism
$$
\delta_{\tilde\alpha}\colon F[S^{k-1}]\longrightarrow \calT(S^k; d_0\tilde\alpha),
$$
which is a homotopy equivalence. Let
$$
F[q]\colon F[S^{k-1}]\longrightarrow F[S^{k-1}]
$$
be the simplicial homomorphism such that
$$
F[q](x)=x^q
$$
for $x\in S^{k-1}\subseteq F[S^{k-1}]$. Clearly $F[q]$ is a
simplicial monomorphism. Now define the simplicial group
$\calT(M(\Z/q,k);\alpha)$ to be the free product with amalgamation
\begin{diagram}
F[S^{k-1}]&\rInto^{\delta_{\alpha}\circ F[q]}& \calT(S^k;d_0\tilde\alpha)\\
\dInto>{\phi_{\alpha}}&&\dTo\\
\AP_*&\rTo&\calT(M(\Z/q,k);\tilde\alpha, \alpha)=\calT(S^k;d_0\tilde\alpha)\ast_{F[S^{k-1}]}\AP_*.\\
\end{diagram}
The construction of $\delta_{\tilde\alpha}\circ F[q]$ is explicitly given as follows:
\begin{enumerate}
\item[]\textit{Regard $\tilde\alpha$ as in $k$-strand braid through the embedding $\Theta\colon F[S^1]\to \AP_*$. Let $\tilde\alpha'$ be a copy of $\tilde\alpha$ for the second copy of $\AP_*$ in the free product with amalgamation
$$
\calT(S^k;d_0\tilde\alpha)=\AP_*\ast_{F[S^{k-2}]}\AP_*.
$$
Let $\sigma_{k-1}$ be the non-degenerate element in $S^{k-1}_{k-1}$. Then
$$\delta_{\tilde\alpha}\circ F[q]\colon F[S^{k-1}]\to \calT(S^k;d_0\tilde\alpha)$$ is the unique simplicial homomorphism such that $\delta(\sigma_{k-1})=(\tilde\alpha (\tilde\alpha')^{-1})^q$. In the language of braids, $\delta_{\tilde\alpha}\circ F[q](F[S^{k-1}])$ is the subgroup of $\calT(S^k;d_0\tilde\alpha)=\AP_*\ast_{F[S^{k-2}]}\AP_*$ generated by the cablings of $(\tilde\alpha (\tilde\alpha')^{-1})^q$ in the self free product with amalgamation of braid groups.}
\end{enumerate}

One interesting point in the simplicial group
$$
\calT(M(\Z/q,k);\tilde\alpha, \alpha)=(\AP_*\ast_{F[S^{k-2}]}\AP_*)\ast_{F[S^{k-1}]}\AP_*
$$
is that we identify the $q$-th power $(\tilde\alpha (\tilde\alpha')^{-1})^q\in \AP_*\ast_{F[S^{k-2}]}\AP_*$ with $\alpha\in \AP_*$. So the cablings of $\alpha$ have $q$-th roots in $\calT(M(\Z/q,k);\tilde\alpha, \alpha)$.

\begin{thm}\label{theorem4.3}
Let $\alpha\in \calZ_{k-1}F[S^1]$ be a Moore cycle with $\alpha\not=1$ and let $\tilde\alpha\in N_{k-1}F[S^1]$ be a Moore chain such that $d_0\tilde\alpha\not=1$. Then
the simplicial group
$
\calT(M(\Z/q,k);\tilde\alpha, \alpha)
$ is homotopy equivalent to the loop space $\Omega M(\Z/q,k)$ of the Moore space.
Moreover the canonical inclusion
$$
\calT(S^k;d_0\tilde \alpha)\rInto \calT(M(\Z/q,k);\tilde\alpha, \alpha)
$$
is homotopic to the looping of the inclusion $S^k\hookrightarrow M(\Z/q,k)$.
\end{thm}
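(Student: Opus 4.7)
The plan is to apply the Whitehead theorem (Theorem~\ref{theorem2.1}) to the pushout diagram defining $\calT(M(\Z/q,k);\tilde\alpha,\alpha)$, and then identify each corner of the resulting homotopy pushout of classifying spaces. By definition
$$
\calT(M(\Z/q,k);\tilde\alpha,\alpha) = \calT(S^k;d_0\tilde\alpha)\ast_{F[S^{k-1}]}\AP_*,
$$
and both simplicial homomorphisms into the amalgamated factors are monomorphisms (the map $\delta_{\tilde\alpha}\circ F[q]$ is a composite of the monomorphisms coming from Lemma~\ref{lemma4.2}, the inclusion~(\ref{equation4.3}), and the $q$-th power map on a simplicial free group; the map $\phi_\alpha$ is a monomorphism by Theorem~\ref{theorem3.1} and Lemma~\ref{lemma3.2}). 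Therefore Theorem~\ref{theorem2.1} applies.

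After applying $\bar W$, the three non-pushout corners of the diagram are identified as follows. First, $\bar W F[S^{k-1}]\simeq S^k$ by the standard property of Milnor's construction. Second, by Theorem~\ref{theorem3.3}, $\bar W\calT(S^k;d_0\tilde\alpha)\simeq S^k$. Third, by~\cite[Theorem 1.1]{CW1}, $\AP_*$ is contractible, hence $\bar W\AP_*\simeq *$. Thus the homotopy pushout computing $\bar W\calT(M(\Z/q,k);\tilde\alpha,\alpha)$ has the form
\begin{diagram}
S^k&\rTo^{\rho}&S^k\\
\dTo&\mathrm{push}&\dTo\\
*&\rTo&\bar W\calT(M(\Z/q,k);\tilde\alpha,\alpha),\\
\end{diagram}
so the main task is to identify the top map $\rho$ with a degree $q$ self-map of $S^k$.

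To pin down $\rho$, I would argue at the level of the simplicial homomorphism $\delta_{\tilde\alpha}\circ F[q]\colon F[S^{k-1}]\to \calT(S^k;d_0\tilde\alpha)$. By Lemma~\ref{lemma4.2}(2), the map $\tilde f_{z_{k-1}}$ is a simplicial homotopy equivalence, and by the homotopy equivalence~(\ref{equation4.3}), so is $\delta_{\tilde\alpha}$. Hence $\delta_{\tilde\alpha}\circ F[q]$ is homotopic (as a loop map, after $\bar W$) to the $q$-th power self-map of $\Omega S^k$, whose classifying map is the degree $q$ self-map of $S^k$. Therefore $\rho$ is the degree $q$ map, and the homotopy pushout above is (by definition) the Moore space $M(\Z/q,k)$. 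This gives $\bar W\calT(M(\Z/q,k);\tilde\alpha,\alpha)\simeq M(\Z/q,k)$, hence $|\calT(M(\Z/q,k);\tilde\alpha,\alpha)|\simeq \Omega M(\Z/q,k)$.

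For the second assertion, the canonical inclusion $\calT(S^k;d_0\tilde\alpha)\hookrightarrow\calT(M(\Z/q,k);\tilde\alpha,\alpha)$ corresponds under $\bar W$, by naturality of the pushout, to the right-hand map in the homotopy pushout square above, which is precisely the inclusion $S^k\hookrightarrow M(\Z/q,k)$. Looping yields the claim. The main obstacle is the identification of the map $\rho$: one must verify that the $q$-th power map $F[q]$ on the Milnor construction $F[S^{k-1}]$ indeed realizes, after $\bar W$, a genuine degree $q$ self-map of $S^k$, and that the homotopy equivalence $\delta_{\tilde\alpha}$ does not twist this degree; this is a tracking-through-universal-properties argument, using that on the generator $\sigma_{k-1}\in S^{k-1}\subseteq F[S^{k-1}]$, the composite $\delta_{\tilde\alpha}\circ F[q]$ sends $\sigma_{k-1}$ to $(\tilde\alpha(\tilde\alpha')^{-1})^q$, and that this element represents $q$ times the generator of $\pi_{k-1}$ of the target by the abelianization computation used in the proof of Lemma~\ref{lemma4.2}.
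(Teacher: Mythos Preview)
Your proposal is correct and follows essentially the same route as the paper's proof: apply the Whitehead theorem to the defining pushout, identify the three known corners as $S^k$, $S^k$, and a point, and check that the top map has degree $q$ so that the homotopy pushout is $M(\Z/q,k)$; the second assertion then follows from naturality. The paper simply asserts that $\bar W(\delta_{\tilde\alpha}\circ F[q])$ induces multiplication by $q$ on $\pi_k$, whereas you spell this out by factoring through the homotopy equivalence $\delta_{\tilde\alpha}$ (via Lemma~\ref{lemma4.2}(2) and~(\ref{equation4.3})) and tracking the generator $\sigma_{k-1}\mapsto (\tilde\alpha(\tilde\alpha')^{-1})^q$ through the abelianization computation --- this is a welcome elaboration of a step the paper leaves implicit.
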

\begin{proof}
By Theorem~\ref{theorem2.1}, the classifying space $\bar W(\calT(M(\Z/q,k);\tilde\alpha, \alpha))$ is given by the homotopy push-out
\begin{diagram}
S^k&\rTo^{\bar W(\delta_{\tilde\alpha}\circ F[q])}& S^k\simeq \bar W(S^k;d_0\tilde\alpha)\\
\dTo&&\dTo\\
\ast&\rTo& \bar W(\calT(M(\Z/q,k);\tilde\alpha, \alpha)).\\
\end{diagram}
Since
$$
\bar W(\delta_{\tilde\alpha}\circ F[q]_*)\colon
\pi_k(S^k)\cong\pi_{k-1}(F[S^{k-1}])\longrightarrow
\pi_k(S^k)\cong \pi_{k-1}(\calT(S^k;d_0\tilde\alpha))
$$
is of degree $q$, $\bar W(\calT(M(\Z/q,k);\tilde\alpha, \alpha))\simeq M(\Z/q,k)$. Observe that the right column of above diagram is homotopic to the inclusion of the bottom cell $S^k\hookrightarrow M(\Z/q,k)$. The assertions follow.
\end{proof}

Let $A_{i,j}, A'_{i,j}$ and $A''_{i,j}$ be copies of $A_{i,j}$ for generators for $P_n$ in the free product with amalgamation
$$
\calT(M(\Z/q,k);\tilde\alpha,\alpha)_{n-1}=(P_n\ast_{F[S^{k-2}]_{n-1}}P_n)\ast_{F[S^{k-1}]_{n-1}}P_n
$$
and let $R_{i,j}$ be the normal closure of $A_{i,j},A'_{i,j}$ and $A''_{i,j}$ in $\calT(M(\Z/q,k);\tilde\alpha,\alpha)_{n-1}$.

\begin{thm}\label{theorem4.4}
Let $k\geq 3$. Let $\alpha\in \calZ_{k-1}F[S^1]$ be a Moore cycle with $\alpha\not=1$ and let $\tilde\alpha\in N_{k-1}F[S^1]$ be a Moore chain such that $d_0\tilde\alpha\not=1$. Then $\pi_n(M(\Z/q,k))$ is isomorphic to the center of the group
$$
((P_n\ast_{F[S^{k-2}]_{n-1}}P_n)\ast_{F[S^{k-1}]_{n-1}}P_n)/[R_{i,j}\ | \ 1\leq i<j\leq n]_S
$$
for any $n$.
\end{thm}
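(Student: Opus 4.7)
The plan is to adapt the proof of Theorem~\ref{theorem3.7} to the threefold amalgamated product model. By Theorem~\ref{theorem4.3}, $\Omega M(\Z/q,k) \simeq \calT(M(\Z/q,k);\tilde\alpha,\alpha)$, so Moore's theorem (Proposition~\ref{proposition2.1}(1)) gives
$$\pi_n(M(\Z/q,k)) \cong \pi_{n-1}(\calT(M(\Z/q,k);\tilde\alpha,\alpha)) = \calZ_{n-1}\calT/\calB_{n-1}\calT.$$
The strategy then has two parts: identify the Moore boundaries with the symmetric commutator subgroup $[R_{i,j} \mid 1 \leq i < j \leq n]_S$, and replace Moore cycles modulo boundaries by the center of the quotient group via \cite[Proposition 2.14]{Wu2}.

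For the first part, observe that $\calT(M(\Z/q,k);\tilde\alpha,\alpha)$ is by construction a simplicial quotient of the threefold free product $\AP_* \ast \AP_* \ast \AP_*$, which is in turn a simplicial quotient of the simplicial free group $\calG^{\ast 3}$ from Subsection~\ref{subsection3.4}. Composing yields a simplicial epimorphism $\calG^{\ast 3} \to \calT(M(\Z/q,k);\tilde\alpha,\alpha)$, and by Proposition~\ref{proposition2.1}(2) this is surjective on Moore chains and hence on Moore boundaries. Applying Lemma~\ref{lemma3.5} to the index set of cardinality three identifies $\calB_{n-1}(\calG^{\ast 3}) = [R_{i,j} \mid 1\leq i<j\leq n]_S$, and this identification descends to the quotient $\calT_{n-1}$.

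For the second part, I would verify $Z(\calT_q) = \{1\}$ for every $q \geq 0$, then apply \cite[Proposition 2.14]{Wu2}. The group
$$\calT_q = (P_{q+1}\ast_{F[S^{k-2}]_q}P_{q+1})\ast_{F[S^{k-1}]_q}P_{q+1}$$
splits its analysis by $q$. For $q > k-1$, Lemma~\ref{cent} applied iteratively confines the center to $F[S^{k-1}]_q$, a free group of rank $\binom{q}{k-1} \geq 2$ and thus centerless. For $q = k-1$, $F[S^{k-1}]_{k-1} \cong \Z$ and its image in the third factor is a power of the Brunnian braid $\alpha \in P_k$; since $Z(P_k)$ is generated by the full twist $\Delta^2$, which is not Brunnian for $k \geq 3$, no non-trivial power of $\alpha$ lies in $Z(P_k)$, forcing $Z(\calT_{k-1}) = \{1\}$. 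For $q < k-1$, $F[S^{k-1}]_q = \{1\}$, so $\calT_q$ reduces to a (possibly amalgamated) free product of at least two non-trivial factors (or is itself trivial at $q=0$), and the Brunnian-braid argument from Case~I of Theorem~\ref{theorem3.7} carries over to rule out central elements.

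The main technical obstacle is the center computation in the delicate range $q = k-1$ and the closely related borderline case $q = k-2$, where one must simultaneously track the cabling constructions defining $\phi_\alpha$ and $\delta_{\tilde\alpha}\circ F[q]$, and argue that the images of the amalgamating generators pick up non-central elements in the ambient triple amalgamated product. The key input in each such case is that Moore cycles in $F[S^1]$ of positive degree correspond to Brunnian pure braids, which cannot be powers of the full twist and hence cannot be central in $P_k$. Once $Z(\calT_q)=\{1\}$ is established for all $q \geq 0$, \cite[Proposition 2.14]{Wu2} gives $Z(\calT_{n-1}/\calB_{n-1}\calT) \cong \calZ_{n-1}\calT/\calB_{n-1}\calT \cong \pi_n(M(\Z/q,k))$, and combining with the Moore boundary identification yields the desired combinatorial description.
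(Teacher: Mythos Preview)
Your proposal is correct and follows essentially the same route as the paper: identify the Moore boundaries by mapping from $\calG^{\ast 3}$ via Lemma~\ref{lemma3.5}, check that each $\calT(M(\Z/q,k);\tilde\alpha,\alpha)_q$ has trivial center, and invoke \cite[Proposition 2.14]{Wu2}. The paper's own proof is considerably terser---it simply asserts the triviality of the center ``by Lemma~\ref{cent}'' without the case-by-case analysis you outline---so your more careful treatment of the borderline degrees $q=k-1$ and $q=k-2$ (using the Brunnian/full-twist argument borrowed from Case~I of Theorem~\ref{theorem3.7}) actually fills in detail that the paper leaves implicit.
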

\begin{proof}
Since $\calT(M(\Z/q,k);\tilde\alpha,\alpha)$ is a simplicial quotient group of $\calG\ast\calG\ast\calG$, the Moore boundaries
$$
\calB_{n-1}\calT(M(\Z/q,k);\tilde\alpha,\alpha)=[R_{i,j}\ | \ 1\leq i<j\leq n]_S
$$
by Lemma~\ref{lemma3.5}. Observe that the group
$(P_m\ast_{F[S^{k-2}]_{m-1}}P_n)\ast_{F[S^{k-1}]_{m-1}}P_m$ has
trivial center by Lemma \ref{cent}. The assertion follows
from~\cite[Proposition 2.14]{Wu2}.
\end{proof}

\begin{rem}
{\rm An explicit choice of $\alpha$ and $\tilde\alpha$ can be given. For instance, we can choose
$$
\alpha_{k+1}=[[[x_1^{-1}, x_1x_2^{-1}],x_2x_3^{-1}],\ldots, x_{k-2}x_{k-1}^{-1},x_{k-1}]
$$
in Theorem~\ref{theorem1.2} as a $k$-strand Brunnian braid and choose
$$
\tilde\alpha_k=[[ x_1x_2^{-1},x_2x_3^{-1}],\ldots, x_{k-2}x_{k-1}^{-1},x_{k-1}]
$$
as a $k$-strand quasi-Brunnian braid in the sense of~\cite{CW1}. Then we obtain an explicit simplicial group model $\calT(M(\Z/q,k);\tilde\alpha_k,\alpha_{k+1})$ for $\Omega M(\Z/q,k)$.}\hfill $\Box$
\end{rem}

\vspace{.5cm}
\section{Description of the homotopy groups of Moore Spaces $M(\Z/q,2)$ and Proof of Theorem~\ref{theorem1.3}}\label{section5}
\vspace{.5cm} Let $\calT(M(\Z/q,2))$ be the free product with
amalgamation by the following diagram
\begin{diagram}
F[S^1]&\rInto^{F[q]}&F[S^1]\\
\dInto>{\Theta}&&\dTo\\
\AP_*&\rTo&\calT(M(\Z/q,2))=\AP_*\ast_{F[S^1]}F[S^1].\\
\end{diagram}
By Theorem~\ref{theorem2.1}, there is a homotopy push-out
\begin{diagram}
S^2\simeq \bar W F[S^1]&\rInto^{\bar F[q]\simeq [q]}&S^2\simeq \bar WF[S^1]\\
\dInto>{\Theta}&&\dTo\\
\bar W \AP_*\simeq \ast&\rTo&\bar W\calT(M(\Z/q,2))\\
\end{diagram}
and so $\bar W\calT(M(\Z/q,2)\simeq M(\Z/q,2)$. Namely
$\calT(M(\Z/q,2))$ is a simplicial group model for $\Omega
M(\Z/q,2)$.

For each $n$, the homomorphism
$$
F[q]\colon F[S^1]_{n-1}=F_{n-1}\longrightarrow F[S^1]_{n-1}=F_{n-1}
$$
is the homomorphism $\phi_q$ described in Theorem~\ref{theorem1.3}. Thus as a group
$$
\calT(M(\Z/q,2))_{n-1}=P_n\ast_{\phi_q}F_{n-1}.
$$
We give an more explicit description of the group $\calT(M(\Z/q,2))_{n-1}$ using degeneracy operations. Let $\{x_j\}_{j=1,\dots, n-1}$ be the set of generators for
$F_{n-1}=F[S^1]_{n-1}$ as the second factor in the free product
$P_n\ast_{\phi_q}F_{n-1}$ for $1\leq j\leq n-1$. (\textbf{Note.} In the introduction to Theorem~\ref{theorem1.3}, we write $y_j$ for $x_j$.) As an element in $F[S^1]_{n-1}$,
$$
x_j=s_{n-2}\cdots s_{j+1}s_j s_{j-2}s_{j-3}\cdots s_1s_0\sigma_1
$$
for $1\leq j\leq n-1$. The group $\calT(M(\Z/q,2))_{n-1}$ is the quotient group of $P_n\ast F_{n-1}$ by the relation
$$
s_{j+1}s_j s_{j-2}s_{j-3}\cdots s_1s_0A_{1,2}=x_j^q
$$
for $1\leq j\leq n-1$, where $s_{j+1}s_j s_{j-2}s_{j-3}\cdots s_1s_0A_{1,2}$ the cabling of $A_{1,2}$ as the picture in the introduction.

Let $z_1=x_1,\
z_n=x_{n-1}$ and $z_i=x_ix_{i-1}^{-1},$ for $i=2,\dots, n-1$. Now
let $R_i=\la z_i\ra^{P_n\ast_{\phi_q}F_{n-1}}$ be the normal
closure of $z_i$ in $P_n\ast_{\phi_q}F_{n-1}$ for $1\leq i\leq n$
and let $R_{s,t}=\la A_{s,t}\ra^{P_n\ast_{\phi_q}F_{n-1}}$ be the
normal closure of $A_{s,t}$ in $P_n\ast_{\phi_q}F_{n-1}$ for
$1\leq s<t\leq n$. Define the index set $\Index(R_j)=\{j\}$ for
$1\leq j\leq n$ and $\Index(R_{s,t})=\{s,t\}$ for $1\leq s<t\leq
n$. Now define the symmetric commutator subgroup
$$
[R_i, R_{s,t}\ | \ 1\leq i\leq n,1\leq s<t\leq
n]_S=\prod_{\{1,2,\ldots,n\}=\bigcup\limits_{j=1}^t\Index(C_j)}[[C_1,C_2],\ldots,C_t],
$$
where each $C_j=R_i$ or $R_{s,t}$ for some $i$ or $(s,t)$.

\begin{thm}[Theorem~\ref{theorem1.3}]
The homotopy group $\pi_n(M(\Z/q,2))$ is isomorphic to the center
of the group
$$
(P_n\ast_{\phi_q}F_{n-1})/[R_i, R_{s,t}\ | \ 1\leq i\leq n,1\leq
s<t\leq n]_S
$$
for any $n>3$.
\end{thm}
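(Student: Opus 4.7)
The plan is to use the simplicial group model $\calT(M(\Z/q,2))$ for $\Omega M(\Z/q,2)$ constructed at the start of Section~\ref{section5}, which gives $\pi_n(M(\Z/q,2))\cong \pi_{n-1}(\calT(M(\Z/q,2)))$, and then to follow the three-stage strategy used to prove Theorem~\ref{theorem3.7} and Theorem~\ref{theorem4.4}: identify the Moore boundaries in degree $n-1$ with the symmetric commutator subgroup in the statement, verify that $\calT(M(\Z/q,2))_{n-1}$ has trivial center, and conclude via \cite[Proposition 2.14]{Wu2}.

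For the Moore boundary computation I would exploit the fact that $\calT(M(\Z/q,2))$ is a simplicial quotient of the unamalgamated free product $\AP_*\ast F[S^1]$; by exactness of the Moore functor (Proposition~\ref{proposition2.1}(3)), $\calB_{n-1}\calT(M(\Z/q,2))$ is the image of $\calB_{n-1}(\AP_*\ast F[S^1])$. In degree $n-1$ the group $(\AP_*\ast F[S^1])_{n-1}=P_n\ast F_{n-1}$ is a free product whose second factor is free on the change-of-basis $z_1=x_1$, $z_i=x_ix_{i-1}^{-1}$, $z_n=x_{n-1}$, chosen precisely so that each $z_i$ is killed by all but one face and the simplicial structure on $F[S^1]$ becomes transparent in the style of Wu's original analysis for $S^2$. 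I would then adapt Lemma~\ref{lemma3.5} by first replacing $\AP_*$ with the free simplicial group $\calG$ of Subsection~\ref{subsection3.4} and running the face-by-face algorithm of \cite[Section~3]{Wu2} on the combined free basis $\{A_{s,t}\}\cup\{z_i\}$, where $R_{s,t}$ is assigned index $\{s,t\}$ and $R_i$ is assigned index $\{i\}$. The outcome, exactly as for the pure braid case, is that an iterated commutator of $A_{s,t}$'s and $z_i$'s is a Moore cycle (respectively, a Moore chain) precisely when the union of the indices of its letters covers $\{1,\ldots,n\}$ (respectively, $\{2,\ldots,n\}$). The $d_0$-shift argument from Lemma~\ref{lemma3.5}(3) then gives $\calB_{n-1}=\calZ_{n-1}=[R_i,R_{s,t}\mid 1\leq i\leq n,\ 1\leq s<t\leq n]_S$, which descends to $\calT(M(\Z/q,2))_{n-1}$.

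For the triviality of the center, $\calT(M(\Z/q,2))_{n-1}=P_n\ast_{\phi_q}F_{n-1}$ is a genuine free product with amalgamation for $q\neq\pm 1$: the inclusion $\Theta\colon F_{n-1}\hookrightarrow P_n$ is proper for $n\geq 3$, and $\phi_q$ is proper on $F_{n-1}$. Lemma~\ref{cent} then yields $Z(\calT(M(\Z/q,2))_{n-1})\leq Z(F_{n-1})=\{1\}$ because $F_{n-1}$ is free of rank $n-1\geq 3$ when $n>3$. Combining this with the Moore boundary identification and \cite[Proposition 2.14]{Wu2} produces the stated isomorphism $\pi_n(M(\Z/q,2))\cong Z((P_n\ast_{\phi_q}F_{n-1})/[R_i,R_{s,t}\mid 1\leq i\leq n,\ 1\leq s<t\leq n]_S)$.

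The hard part is the Moore boundary computation. The uniform pair-indexed setup of Lemma~\ref{lemma3.5} must be upgraded to a genuinely hybrid indexing scheme, with the braid letters $A_{s,t}$ contributing pair-indices from $\AP_*$ and the Wu-basis letters $z_i$ contributing singleton-indices from $F[S^1]$. One must verify in detail that the face-map actions on the two families interact correctly, so that the bijective selection argument of \cite[Proposition~3.3]{Wu2} still produces a free basis of $\ker(d_{n-1})$ adapted to computing the remaining faces, and so that the resulting recursive algorithm outputs exactly the symmetric commutator subgroup in the form stated, with the index-union condition $\bigcup\mathrm{Index}(C_j)=\{1,\ldots,n\}$. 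Once this combinatorial bookkeeping is pinned down, the proof closes along the template of Theorem~\ref{theorem4.4}, with the $n=3$ case explicitly excluded just as in Theorems~\ref{theorem3.7} and~\ref{theorem1.2}.
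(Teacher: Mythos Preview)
Your strategy matches the paper's almost exactly: identify the Moore boundaries of $\calT(M(\Z/q,2))$ with the symmetric commutator subgroup via a free cover and the algorithm of \cite[Section~3]{Wu2}, check triviality of the center of $\calT(M(\Z/q,2))_{n-1}$ via Lemma~\ref{cent}, and conclude by \cite[Proposition~2.14]{Wu2}.

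There is one technical slip you should fix. You speak of the ``combined free basis $\{A_{s,t}\}\cup\{z_i\}$'' and call $z_1,\ldots,z_n$ a change of basis for $F[S^1]_{n-1}=F_{n-1}$; but there are $n$ of the $z_i$ while $F_{n-1}$ has rank $n-1$, so $\{z_i\}$ is not a free basis (indeed $z_n=x_{n-1}$ is a word in $z_1,\ldots,z_{n-1}$). The face-by-face algorithm of \cite{Wu2} needs a genuine free basis on which the faces act by the simple delete/shift pattern, and no rank-$(n-1)$ basis of $F[S^1]_{n-1}$ does this symmetrically in all $n$ faces. The paper handles this exactly as it handled $\AP_*$ versus $\calG$: it introduces an auxiliary simplicial \emph{free} group $\tilde F$ with $\tilde F_{n-1}$ free on abstract letters $z_1,\ldots,z_n$ and the obvious delete/shift faces, together with a simplicial epimorphism $\tilde F\twoheadrightarrow F[S^1]$ sending each abstract $z_j$ to the element $z_j\in F[S^1]_{n-1}$. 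One then runs Lemma~\ref{lemma3.5} on $\calG\ast\tilde F$ (where the hybrid indexing you describe works verbatim) and pushes the result forward along $\calG\ast\tilde F\twoheadrightarrow \AP_*\ast F[S^1]\twoheadrightarrow \calT(M(\Z/q,2))$. With this correction your argument is the paper's argument.
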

\begin{proof}
The proof is similar to that of Theorem~\ref{theorem1.2}. It is easy to see that the group $\calT(M(\Z/q,2))_{m}=P_{m+1}\ast_{\phi_q}F_{m}$ has the trivial center for $m\geq 2$. From~\cite[Proposition 2.14]{Wu2}, $\pi_m(\calT(M(\Z/q,2)))\cong\pi_{m+1}(M(\Z/q,2))$ is isomorphic to the center of $\calT(M(\Z/2))_m/\calB_m\calT(M(\Z/q,2))$ for $m\geq 3$. Thus the key point is to show the Moore boundaries
$$
\calB_{n-1}\calT(M(\Z/q,2))=[R_i, R_{s,t}\ | \ 1\leq i\leq n,1\leq
s<t\leq n]_S.
$$
We construct a simplicial group $\tilde F$ by $\tilde F_{n-1}$ generated by the letters $z_1,\ldots,z_n$ with face operation
$$
d_jz_k=\left\{
\begin{array}{lcl}
z_k&\textrm{ for }& k<j+1\\
1&\textrm{ for }&k=j+1\\
z_{k-1}&\textrm{ for }& k>j+1\\
\end{array}
\right.
$$
and degeneracy operations
$$
s_jz_k=\left\{
\begin{array}{lcl}
z_k&\textrm{ for }& k<j+1\\
z_{j+1}z_{j+2}&\textrm{ for }&k=j+1\\
z_{k+1}&\textrm{ for }& k>j+1\\
\end{array}
\right.
$$
for $0\leq j\leq n-1$. Then $\tilde F$ is a simplicial group with a simplicial epimorphism $f\colon \tilde F\to F[S^1]$ by sending the letter $z_j$ of $\tilde F_{n-1}$ to the element $z_j\in F[S^1]_{n-1}$. Let $g\colon \calG\to \AP_*$ be the canonical simplicial epimorphism. Then we have the simplicial epimorphism
$$
\calG\ast\tilde F\rOnto \AP_*\ast F[S^1]\rOnto \calT(M(\Z/q,2)).
$$
Observe that $\Ker(d_n\colon (\calG\ast\tilde F)_n\to
(\calG\ast\tilde F)_{n-1})$ is the normal closure of the elements
$x_{i,n+1}, z_{n+1}$. By repeating the arguments in the proof of
Lemma~\ref{lemma3.5}, we have
$$
\calB_{n-1}(\calG\ast \tilde F)=[R_i,R_{s,t}\ | \ 1\leq i\leq n, \ 1\leq s<t\leq n]_S
$$
and hence the result.
\end{proof}

\noindent{\bf Example.} Consider the case $n=3$. The group
$$G=(P_3\ast_{\phi_q}F_{2})/[R_i, R_{s,t}\ | \ 1\leq i\leq 3,1\leq
s<t\leq 3]_S$$ is given by generators $x_1,x_2,
a_{12},a_{13},a_{23}$ and the following relations
\begin{align*}
& x_1^q=a_{12}a_{13},\ x_2^q=a_{13}a_{23},\\
& [[x_1^{g_1},x_2^{g_2}],x_1]=[[x_1^{g_1},x_2^{g_2}],x_2]=1,\ g_1,g_2\in G\\
& [a_{12}^g,a_{13}]=[a_{12}^g,a_{23}]=[a_{13}^g,a_{23}]=1,\ g\in
G\\
& [x_1^g, a_{23}]=[(x_1x_2^{-1})^g,a_{13}]=[x_2^g,a_{12}]=1,\ g\in
G.
\end{align*}
Presenting $a_{13}, a_{23}$ via generators $x_1,x_2,a_{12}$, we
get the following 3-generator presentation of $G$:
\begin{align*}
& [[x_1^{g_1},x_2^{g_2}],x_1]=[[x_1^{g_1},x_2^{g_2}],x_2]=1,\ g_1,g_2\in G\\
&
[a_{12}^g,a_{12}^{-1}x_1^q]=[a_{12}^g,x_1^{-q}a_{12}x_2^q]=[(a_{12}^{-1}x_1^q)^g,x_1^{-q}a_{12}x_2^q]=1,\
g\in
G\\
& [x_1^g,
x_1^{-q}a_{12}x_2^q]=[(x_1x_2^{-1})^g,a_{12}^{-1}x_1^q]=[x_2^g,a_{12}]=1,\
g\in G.
\end{align*}
Straightforward computations show that $G$ is a 3-generator
nilpotent group of class 2, given by generators $x_1,x_2,a_{12}$
and relations
\begin{align*}
&
[a_{12},x_2]=[a_{12},x_1^q]=[x_1^q,x_2^q]=[x_1,a_{12}x_2^q]=[x_1x_2^{-1},a_{12}^{-1}x_1^q]=1\\
& [[G,G],G]=1
\end{align*}
It follows that the order of the element $[x_1,x_2]$ is $(2q,q^2)$
in $G$. The center of $G$ is bigger than the subgroup generated by
$[x_1,x_2]$, since $a_{12}^q$ lies in the center. Denote
$Z_1=\langle a_{12},x_1\rangle^G,\ Z_2=\langle
a_{12},x_1x_2^{-1}\rangle^G,\ Z_3=\langle x_2\rangle^G.$ The
homotopy group $\pi_3M(Z/q,2)$ is given now as the intersection
$$
Z_1\cap Z_2\cap Z_3\simeq \mathbb Z/(2q,q^2).
$$

\end{document}